\definecolor{bluegreen}{rgb}{0.0, 0.87, 0.87}
\def\mathscr{\EuScript}
\newcommand{\defegal}{:=}                                   
\newcommand{\bbN}{\mathbb{N}}                               
\newcommand{\bbR}{\mathbb{R}}                               
\newcommand{\abs}[1]{\left|#1\right|}                       
\newcommand{\norm}[1]{\left\|#1\right\|}                    
\newcommand{\sqnorm}[1]{\left\|#1\right\|^{2}}              
\newcommand{\projop}[1]{\mathrm{proj}_{#1}}                 
\newcommand{\proj}[2]{\projop{#1}\left(#2\right)}           
\newcommand{\dual}{^{\star}}                                
\newcommand{\opt}{^{\sharp}}                                
\newcommand{\ad}{^{\mathrm{ad}}}                            
\newcommand{\np}[1]{(#1)}                                   
\newcommand{\bp}[1]{\big(#1\big)}                           
\newcommand{\Bp}[1]{\Big(#1\Big)}                           
\newcommand{\bgp}[1]{\bigg(#1\bigg)}                        
\newcommand{\bc}[1]{\big[#1\big]}                           
\newcommand{\na}[1]{\{#1\}}                                 
\newcommand{\ba}[1]{\big\{#1\big\}}                         
\newcommand{\partie}[1]{#1}                                 
\newcommand{\argmin}{\mathop{\arg\min}}                     
\newcommand{\gradi}[2][]{\nabla_{#1}#2}                     
\newcommand{\espacea}[1]{\mathbb{#1}}                       
\newcommand{\tribu}[1]{\mathscr{#1}}                        
\newcommand{\omeg}{\Omega}                                  
\newcommand{\trib}{\tribu{A}}                               
\newcommand{\prbt}{\mathbb{P}}                              
\newcommand{\espe}{\mathbb{E}}                              
\def\va@a{\boldsymbol{\va@arg^{\textstyle\text{\unboldmath$\scriptstyle\va@expo$}}_{\textstyle\text{\unboldmath$\scriptstyle\va@index$}}}}
\def\va#1{\def\va@expo{}\def\va@index{}\def\va@arg{\uppercase{#1}}%
  \@ifnextchar^{\va@h}{\@ifnextchar_\va@u\va@a}}
\def\va@h^#1{\def\va@expo{#1}\@ifnextchar_\va@hu\va@a}
\def\va@u_#1{\def\va@index{#1}\@ifnextchar^\va@uh\va@a}
\def\va@hu_#1{\def\va@index{#1}\va@a}
\def\va@uh^#1{\def\va@expo{#1}\va@a}
\newcommand{\bigdelim}[1]{\bp{#1}}                          
\newcommand{\vardelim}[1]{\left(#1\right)}                  
\newcommand{\bigdelims}[2]{\bigdelim{#1\ \big|\ #2}}        %
\newcommand{\besp}[2][]{\espe_{#1}\bigdelim{#2}}            
\newcommand{\bespc}[3][]{\espe_{#1}\bigdelims{#2}{#3}}      
\newcommand{\proscal}[2]{\left\langle#1\:,#2\right\rangle}  
\newcommand{\nscal}[2]{\langle#1\:,#2\rangle}               
\newcommand{\bscal}[2]{\big\langle#1\:,#2\big\rangle}       
\newcommand{\babs}[1]{\big|#1\big|}                         
\newcommand{\nnorm}[1]{\|#1\|}                              
\newcommand{\bnorm}[1]{\big\|#1\big\|}                      
\newcommand{\epi}{\mathop{\mathrm{epi}}}                    
\newcommand{\lsc}{\text{l.s.c.}}                            
\newcommand{\wrt}{\text{w.r.t.}}                            
\newcommand{\as}{\text{a.s.}}                               
\newcommand{\Pas}{\text{$\prbt$-}\as}                       
\def\eqsepv{\; , \enspace}                                  
\def\eqfinv{\; ,}                                           
\def\eqfinp{\; .}                                           
\newcommand{\Uad}{\partie{U}\ad}
\else \newenvironment{proof}{\small{\bf Proof.}}{\hfill$\Box$\normalsize\bigskip} \fi
\newcommand{\finpreuvesymb}{$\Box$}
\newcommand{\finremarksymb}{$\Diamond$}
\newcommand{\finexemplesymb}{$\triangle$}
\newcommand{\finpreuve}{\ \hspace*{\fill}\finpreuvesymb}
\newcommand{\finremark}{\ \hspace*{\fill}\finremarksymb}
\newcommand{\finexemple}{\ \hspace*{\fill}\finexemplesymb}
\def\endproof{\finpreuve\@endtheorem}\fi
\def\endremark{\finremark\@endtheorem}
\def\endexample{\finexemple\@endtheorem}
\newcommand{\gph}{\mathop{\mathrm{gph}}}
\newcommand{\cl}{\mathop{\mathrm{cl}}}
\newcommand{\borel}[1]{\tribu{B}({#1})}
\newcommand{\expec}[1]{\espe\left(#1\right)}
\newcommand{\add}{^{\Sigma}}
\newcommand{\coupl}{^{C}}
\newcommand{\normdual}[1]{\left\|#1\right\|_{\star}}
\newcommand{\Jad}{J\add}
\newcommand{\jad}{j\add}
\newcommand{\jc}{j\coupl}
\newcommand{\Jc}{J\coupl}
\newcommand{\J}{J}
\newcommand{\ban}{\espacea{U}}
\newcommand{\messpace}{\espacea{W}}
\newcommand{\optaux}{\widetilde{\va{U}}}
\newcommand{\jconv}[1]{\expec{\J\vardelim{\va{U}_{#1}} - J\vardelim{u\opt}}}
\newcommand{\jconvany}[2]{\expec{\J\vardelim{\va{U}_{#1}} - J\vardelim{\va{U}_{#2}}}}
\newcommand{\hkintegrand}{\tau_{k}}
\newtheorem{theorem}{Theorem}[section]
\newtheorem{lemma}[theorem]{Lemma}
\newtheorem{corollary}[theorem]{Corollary}
\newtheorem{proposition}[theorem]{Proposition}
\newtheorem{definition}[theorem]{Definition}
\newtheorem{example}[theorem]{Example}
\newtheorem{remark}[theorem]{Remark}
\title{The stochastic Auxiliary Problem Principle in Banach spaces: measurability and convergence}
\author{
  Thomas Bittar, Pierre Carpentier, Jean-Philippe Chancelier, J\'{e}r\^{o}me Lonchampt
}
\begin{document}

\maketitle

\begin{abstract}
  The stochastic Auxiliary Problem Principle (APP) algorithm is a general
  Stochastic Approximation (SA) scheme that turns the resolution of an original
  {convex} optimization problem into the iterative resolution of a
  sequence of auxiliary problems. This framework has been introduced to design
  decomposition-coordination schemes but also encompasses many well-known SA
  algorithms such as stochastic gradient descent or stochastic mirror
  descent. We study the stochastic APP in the case where the iterates lie in a
  Banach space and we consider an additive error on the computation of the
  subgradient of the objective. In order to derive convergence results or
  efficiency estimates for a SA scheme, the iterates must be random
  variables. This is why we prove the measurability of the iterates of the
  stochastic APP algorithm. Then, we extend convergence results from the Hilbert
  space case to the reflexive {separable} Banach space case. Finally, we derive
  efficiency estimates for the function values taken at the averaged sequence of
  iterates or at the last iterate, the latter being obtained by adapting the
  concept of modified Fej\'{e}r monotonicity to our framework.
\end{abstract}


\section{Introduction}

Let $\ban$ be a {reflexive separable} Banach space whose norm is denoted by $\norm{\cdot}$, $(\omeg, \trib, \prbt)$
be a probability space and $(\messpace, \borel{\messpace})$ be a measurable
topological vector space with $\borel{\messpace}$ being the Borel $\sigma$-field
on $\messpace$. We refer
to~\cite{bauschke_convex_2011,billingsley_probability_1995} for the definitions
of basic concepts in analysis and probability theory. We consider a stochastic
optimization problem of the form:
\begin{align}
  \min_{u \in \Uad} \ba{\J(u) := \Jc(u) + \Jad(u)} \text{ where }
  \left\{
  \begin{aligned}
    \Jc(u) &= \expec{\jc(u, \va{W})} \eqfinv\\
    \Jad(u) &= \expec{\jad(u, \va{W})} \eqfinp
  \end{aligned}
  \right.
  \label{eq:master_pb}
\end{align}
where $\Uad \subset \ban$ is a non-empty closed convex set,
$\va{W} : \omeg \rightarrow \messpace$ is a random variable,
{$\jc: \ban \times \messpace \rightarrow \bbR$ and
  $\jad: \ban \times \messpace \rightarrow \bbR$ are such that} $\jc(\cdot, w)$
and $\jad(\cdot, w)$ are proper, convex, {and} lower-semicontinuous
(\lsc) real-valued functions for all $w \in \messpace$.

Stochastic Approximation (SA) algorithms are the workhorse for solving
Problem~\eqref{eq:master_pb}. The SA technique has been originally introduced
in~\cite{kiefer_stochastic_1952,robbins_stochastic_1951} as an iterative method
to find the root of a monotone function which is known only through noisy
estimates. SA algorithms have been the subject of many theoretical
studies~\cite{bach_non-asymptotic_2011,karimi_non-asymptotic_2019,nemirovski_robust_2009,polyak_acceleration_1992}
and have applications in various disciplines such as machine learning, signal
processing or stochastic optimal
control~\cite{benveniste_adaptive_2012,kushner_stochastic_1997}. Back in 1990,
with decomposition applications in mind, Culioli and
Cohen~\cite{culioli_decomposition/coordination_1990} proposed a general SA
scheme in an infinite dimensional Hilbert space based on the so-called Auxiliary
Problem Principle (APP), called the stochastic APP algorithm. This algorithm
also encompasses several well-known algorithms such as stochastic gradient
descent, the stochastic proximal gradient algorithm or stochastic mirror
descent. Recently,~\cite{geiersbach_projected_2019,martin_multilevel_2019} apply
SA methods to solve PDE-constrained optimization problems. In this paper, we
extend the stochastic APP algorithm to the Banach case.

A SA algorithm is defined by a recursive stochastic update rule.  For
$k \in \bbN$, the $k$-th iterate of a SA algorithm is a mapping
$\va{U}_{k}: \omeg \rightarrow \ban$, where the range of $\va{U}_{k}$ is
included in $\Uad$.  We denote by $\proscal{\cdot}{\cdot}$ the duality pairing
between $\ban$ and its topological dual space $\ban\dual$. In the case where
$\jc$ is differentiable {with respect to $u$}, the $k$-th iteration of
the stochastic APP algorithm computes a minimizer $u_{k+1}$ such that:
\begin{align}
  u_{k+1} \in \argmin_{u \in \Uad} K(u) + \proscal{\varepsilon_{k}\nabla_{u} \jc(u_{k}, w_{k+1}) - \nabla K (u_{k})}{u} + \varepsilon_{k} \jad(u, w_{k+1}) \eqfinv
  \label{eq:stoch-ppa}
\end{align}
where $\varepsilon_{k} > 0$ is a positive real, $w_{k+1}$ is a realization of
the random variable $\va{W}$ and $K$ is a user-defined
Gateaux-differentiable\footnote{{We use~\cite[Definition
    2.43]{bauschke_convex_2011} for the Gateaux-differentiability, which
    requires the linearity and boundedness of the directional derivative.}}
convex function. The role of the function $K$ is made clear in
Section~\ref{sec:desc_ex}. In the context of the APP,
Problem~\eqref{eq:stoch-ppa} is called the \emph{auxiliary problem} and the
function $K$ is called the \emph{auxiliary function}.  Let us now briefly expose
how this scheme reduces to well-known algorithms for particular values of $K$
and $\jad$.

The most basic SA scheme is stochastic gradient descent. Assume that $\ban$ is a
Hilbert space, $\Uad = \ban$ and $\jad = 0$. The $k$-th iteration is given by:
\begin{align}
  u_{k+1} = u_{k} - \varepsilon_{k} \nabla_{u} \jc(u_{k}, w_{k+1}) \eqfinp
  \label{eq:sgd}
\end{align}
This is exactly the stochastic APP algorithm~\eqref{eq:stoch-ppa} with $\jad= 0$ and $K = \frac{1}{2}\sqnorm{\cdot}$ where $\norm{\cdot}$ is the norm induced by the inner product in $\ban$.

In the case where $\jc$ is differentiable and $\jad$ is non-smooth but with a
proximal operator that is easy to compute, proximal
methods~\cite{atchade_perturbed_2017,parikh_proximal_2014} are particularly
efficient, even in a high-dimensional Hilbert space $\ban$. An iteration of the
stochastic proximal gradient algorithm is:
\begin{align}
  u_{k+1} \in \argmin_{u \in \ban} \frac{1}{2\varepsilon_{k}}\sqnorm{u_{k} - u} + \proscal{\nabla_{u} \jc(u_{k}, w_{k+1})}{u - u_{k}} + \jad(u, w_{k+1}) \eqfinp
  \label{eq:prox}
\end{align}
This is again the stochastic APP algorithm with $K = \frac{1}{2}\sqnorm{\cdot}$
but with a non zero function $\jad$. The proximal term
$\frac{1}{2\varepsilon_{k}}\sqnorm{u_{k} - u}$ forces the next iterate $u_{k+1}$
to be close to $u_{k}$ with respect to the norm $\norm{\cdot}$. When $\jad$ is
the indicator of a convex set, the stochastic proximal gradient method reduces
to stochastic projected gradient descent and when $\jad = 0$, this is just the
regular stochastic gradient descent~\eqref{eq:sgd}. Proximal methods are
well-suited for regularized regression problems in machine learning for example.

When $\ban$ is only a Banach space and not a Hilbert space,
Equation~\eqref{eq:sgd} does not make sense as $u_{k}\in \ban$ while
$\nabla_{u} \jc(u_{k}, w_{k+1}) \in \ban\dual$, thus the minus operation is not
defined. This difficulty is addressed with the mirror descent
algorithm~\cite{nemirovski_problem_1983}. The original insight of the method is
to map the iterate $u_{k}$ to $\nabla K(u_{k}) \in \ban\dual$, where $K$ is a
Gateaux-differentiable user-defined function. Then, we do a gradient step in
$\ban\dual$ and we map back the resulting point to the primal space $\ban$. The
function $K$ is called the \emph{mirror map} in this
setting~\cite{bubeck_convex_2015}. There is also a proximal interpretation of
mirror descent: instead of defining proximity with the norm $\norm{\cdot}$, the
mirror descent algorithm and its stochastic
counterpart~\cite{nemirovski_robust_2009} use a Bregman
divergence~\cite{bregman_relaxation_1967} that captures the geometric properties
of the problem:
\begin{align}
  u_{k+1} \in \argmin_{u \in \Uad} \frac{1}{\varepsilon_{k}}D_{K}(u, u_{k}) + \proscal{\nabla_{u} \jc(u_{k}, w_{k+1})}{u - u_{k}} \eqfinv
  \label{eq:mirror}
\end{align}
where $D_{K}$ is the Bregman divergence associated with
$K$:
\begin{align*}
  D_{K}(u, u') = K(u) - K(u') - \proscal{\nabla K(u')}{u-u'}, \quad u,u' \in \ban\eqfinp
\end{align*}
The function $K$ is sometimes called the \emph{distance-generating function} as
it defines the proximity between $u$ and $u'$. With
$K = \frac{1}{2}\sqnorm{\cdot}$, we get back to the setting of stochastic
gradient descent. The mirror descent algorithm is particularly suited to the
case where $\nabla_{u} \jc$ has a Lipschitz constant which is large with respect
to the norm $\norm{\cdot}$ but small with respect to some other norm that is
better suited to the geometry of the problem~\cite{nemirovski_robust_2009}. For
example, in the finite-dimensional case, the performance of stochastic gradient
descent depends on the Lipschitz constant of $\nabla_{u} \jc$ in the Euclidean
geometry. Hence, if the problem exhibits a non-Euclidean geometric structure,
stochastic mirror descent may be more efficient. Stochastic mirror descent
corresponds to the stochastic APP with a general function $K$ and $\jad = 0$.

The stochastic APP algorithm combines the ideas of mirror descent and of the
proximal gradient method. The iteration defined by~\eqref{eq:stoch-ppa} can be
equivalently written as:
\begin{align*}
  u_{k+1} \in \argmin_{u \in \Uad} \frac{1}{\varepsilon_{k}}D_{K}(u, u_{k}) + \proscal{\nabla_{u} \jc(u_{k}, w_{k+1})}{u - u_{k}} + \jad(u, w_{k+1})\eqfinp
\end{align*}
In the sequel, we stick to the formulation~\eqref{eq:stoch-ppa} and we consider
a more general version as $\jc$ is only assumed to be subdifferentiable and we
allow for an additive error on the subgradient
$\partial_{u} \jc(u_{k}, w_{k+1})$.  Figure~\ref{fig:link_algo} summarizes the
relationship between the four stochastic approximation algorithms that we have
introduced.

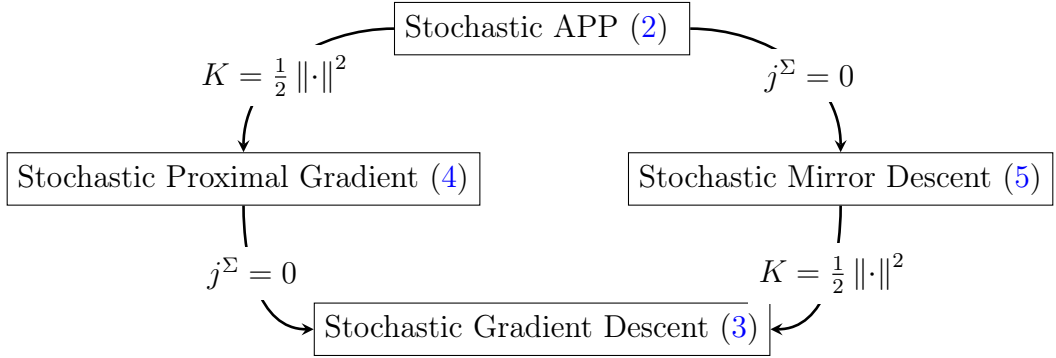
\begin{figure}[htbp]
  \centering
  \begin{tikzpicture}
    \begin{scope}[every node/.style={rectangle, draw, font=\normalsize, align=center, fill opacity=0.5, text opacity=1, draw opacity=1}]
      \node (APP) {Stochastic APP~\eqref{eq:stoch-ppa}
      };
      \node (prox) [xshift=-2cm, yshift=-2cm] at (APP.west) {Stochastic Proximal Gradient~\eqref{eq:prox}};
      \node (mirror) [xshift=2cm, yshift=-2cm] at (APP.east) {Stochastic Mirror Descent~\eqref{eq:mirror}};
      \node (sgd) [yshift=-4cm] at (APP.center) {Stochastic Gradient Descent~\eqref{eq:sgd}};
    \end{scope}
    \begin{scope}[line width=1pt, >=stealth, every node/.style={font=\normalsize}]
      \draw[->] (APP.west) to [out=180, in=90] node [pos=0.6, fill=white] {$K = \frac{1}{2}\sqnorm{\cdot}$} (prox.north);
      \draw[->] (APP.east) to [out=0, in=90] node [pos=0.6, fill=white] {$\jad = 0$} (mirror.north);
      \draw[->] (prox.south) to [out=270, in=180] node [pos=0.4, fill=white] {$\jad = 0$} (sgd.west);
      \draw[->] (mirror.south) to [out=270, in=0] node [pos=0.4, fill=white] {$K = \frac{1}{2}\sqnorm{\cdot}$} (sgd.east);
    \end{scope}
  \end{tikzpicture}
  \caption{Links between the different stochastic approximation algorithms.}
  \label{fig:link_algo}
\end{figure}

The paper is organized as follows.  In Section~\ref{sec:desc_ex}, we describe
the setting of the stochastic APP algorithm considered in this paper along with
some examples of application.  In Section~\ref{sec:mes}, we prove the
measurability of the iterates of the stochastic APP algorithm in a
reflexive {separable} Banach space.  The issue of measurability is not often
addressed in the literature, yet it is essential from a theoretical point of
view. When convergence results or efficiency estimates are derived for SA
algorithms, the iterates must be random variables so that the probabilities or
the expectations that appear in the computation are well-defined. For that
purpose, we carry out a precise study based
on~\cite{castaing_convex_1977,hess_measurability_1995} and we adapt some results
of~\cite{rockafellar_variational_2004} to the {infinite-dimensional}
case.  Section~\ref{sec:conv} deals with convergence results and efficiency
estimates. In \S\ref{subsec:conv_app}, convergence results for the iterates and
for the function values of the stochastic APP algorithm are extended to
reflexive {separable} Banach spaces. These results already appear
in~\cite{culioli_decomposition/coordination_1990} for the Hilbert case. They are
also given, again in the Hilbert case, for stochastic projected gradient
in~\cite{geiersbach_projected_2019} and stochastic mirror descent in {the
  finite-dimensional setting}~\cite{nemirovski_robust_2009}. In
\S\ref{subsec:eff_est}, we derive efficiency estimates for the expected function
value taken either for the averaged sequence of iterates or for the last
iterate. These efficiency estimates take into account the additive error on the
subgradient, using the technique from~\cite{geiersbach_stochastic_2020-1}. To
obtain convergence rates for the expected function value of the last iterate, we
adapt the concept of modified Fej\'{e}r monotonicity~\cite{lin_modified_2018} to
the framework of the stochastic APP algorithm.  The paper ends by some
concluding remarks in Section~\ref{sec:ccl}.

\section{Description of the algorithm and examples}
\label{sec:desc_ex}

We describe the version of the stochastic APP algorithm that is studied in this
paper and we give some examples of problems that fit in the general framework of
Problem~\eqref{eq:master_pb}.

\subsection{Setting of the stochastic APP algorithm}
\label{subsec:setting_algo}

The original idea of the APP, first introduced in~\cite{cohen_optimization_1978}
and extended to the stochastic case
in~\cite{culioli_decomposition/coordination_1990}, is to solve a sequence of
auxiliary problems whose solutions converge to the optimal solution of
Problem~\eqref{eq:master_pb}. Assume that $\jc$ is subdifferentiable
{with respect to $u$}. At iteration $k$ of the algorithm, a realization
$w_{k+1}$ of a random variable $\va{W}_{k+1}$ is drawn. The random variables
$\va{W}_{1}, \ldots, \va{W}_{k+1}$ are independent and identically distributed
as $\va{W}$. Then, the following auxiliary problem is solved:
\begin{align}
  \label{pb:stoch-pbsansc-ppa}
  \min_{u \in \Uad} K(u) + \proscal{\varepsilon_{k}(g_{k} + r_{k}) - \nabla K (u_{k})}{u} + \varepsilon_{k} \jad(u, w_{k+1}) \eqfinv
\end{align}
where $g_{k} \in \partial_{u} \jc(u_{k}, w_{k+1})$ and we allow for an additive
error $r_{k}$ on the gradient. The term $r_{k}$ represents a numerical error or
a bias due to an approximation of the gradient e.g. with a finite difference
scheme. The auxiliary problem is characterized by the choice of the auxiliary
function $K$. In the introduction, we have given particular choices for $K$ that
lead to well-known algorithms. Depending on the context, the function $K$ allows
for an adaptation of the algorithm to the geometric structure of the data or it
can provide decomposition properties to the algorithm, see
Example~\ref{ex:decomp_prop}. The stochastic APP algorithm is given in
Algorithm~\ref{alg:sto_ppa}.

\begin{algorithm}[htbp]
  \begin{algorithmic}[1]
    \State Choose an initial point~$u_{0} \in \Uad$, and a positive sequence~$\{ \varepsilon_{k} \}_{k \in \bbN}$.
    \State At iteration~$k$, draw a realization~$w_{k+1}$
    of the random variable~$\va{w}_{k+1}$.
    \State Solve Problem~\eqref{pb:stoch-pbsansc-ppa},
    denote by $u_{k+1}$ the solution.
    \State $k \gets k+1$ and go back to~2.
  \end{algorithmic}
  \caption{Stochastic APP algorithm}
  \label{alg:sto_ppa}
\end{algorithm}

No explicit stopping rule is provided in
Algorithm~\ref{alg:sto_ppa}. It is indeed difficult to know when to stop a
stochastic algorithm as its properties are of statistical nature. Nevertheless,
stopping rules have been developed
in~\cite{wada_stopping_2015,yin_stopping_1990} for the Robbins-Monro
algorithm. In practice, the stopping criterion may be a maximal number of
evaluations imposed by a budget limitation.

\subsection{Some cases of interest for the stochastic APP}
\label{subsec:ex}

The structure of Problem~\eqref{eq:master_pb} is very general and
covers a wide class of problems that arise in machine learning or
stochastic optimal control.  We give some cases of interest that can be cast in
this framework.

\begin{example}
  \emph{Regularized risk minimization in machine learning.}

  Let $(\espacea{X}, \tribu{X})$ and $(\espacea{Y}, \tribu{Y})$ be two
  measurable spaces, where $\tribu{X}$ and $\tribu{Y}$ denote respectively the
  $\sigma$-fields on $\espacea{X}$ and $\espacea{Y}$. Let
  $X \subset \espacea{X}$ and $Y \subset \espacea{Y}$ and assume there is a
  probability distribution $\nu$ on $X \times Y$. Let
  $\na{(x_{i}, y_{i})}_{1 \leq i \leq N} \in (X \times Y)^{N}$ be a training set
  which consists {of} independent and identically distributed samples of
  a random vector $(\va{X}, \va{Y})$ following the distribution $\nu$. Consider
  a convex loss function $\ell : Y \times Y \rightarrow \bbR_{+}$ and let $\ban$
  be a space of functions from $X$ to $Y$. The goal of regularized expected loss
  minimization is to find a regression function $u\opt \in \Uad$, where
  $\Uad \subset \ban$, such that:
  \begin{align}
    \label{eq:expec_loss_min}
    u\opt \in \argmin_{u \in \Uad} \int_{X \times Y} \ell\bp{y, u(x)}\nu(\mathrm{d}x, \mathrm{d}y) + R(u) \eqfinv
  \end{align}
  where $R$ is a regularization term. In practice, as the distribution $\nu$ is
  unknown, we solve an approximate problem, called the regularized empirical
  risk minimization problem:
  \begin{align}
    \label{eq:approx_expec_loss_min}
    u\opt \in \argmin_{u \in \Uad} \frac{1}{N} \sum_{i=1}^{N} \ell\bp{y_{i}, u(x_{i})} + R(u) \eqfinp
  \end{align}
  Problem~\eqref{eq:approx_expec_loss_min} is exactly of the form of
  Problem~\eqref{eq:expec_loss_min} if the distribution $\nu$ is taken to be the
  empirical measure $\nu = 1/N \sum_{i=1}^{N} \delta_{(x_{i}, y_{i})}$, where
  $\delta_{(x_{i}, y_{i})}$ denotes the measure of mass one at $(x_{i}, y_{i})$
  and zero elsewhere. The regularized expected loss minimization
  Problem~\eqref{eq:expec_loss_min} is of the form of
  Problem~\eqref{eq:master_pb} with the smooth term
  $\Jc(u) = \int_{X \times Y} \ell \bp{y, u(x)}\nu(\mathrm{d}x, \mathrm{d}y)$
  and the possibly non-smooth term $\Jad(u) = R(u)$.
\end{example}

\begin{example} \label{ex:decomp_prop}
  \emph{Decomposition aspects of the stochastic APP algorithm.}

  Let $n>0$ be a given positive integer. Suppose that
  $\ban = \ban_{1} \times \ldots \times \ban_{n}$ and
  $\Uad = \Uad_{1} \times \ldots \times \Uad_{n}$ with
  $\Uad_{i} \subset \ban_{i}$ for all $i \in \na{1, \cdots, n}$. Moreover,
  assume that $\jad$ is an additive function, that is,
  $\jad(u, \va{W}) = \sum_{i=1}^{n} \jad_{i}(u^{i}, \va{W})$ with
  $u^{i} \in \ban_{i}$, whereas $\jc$ induces a non-additive coupling. In this
  case, Problem~\eqref{eq:master_pb} is:
  \begin{align*}
    \min_{u \in \Uad} \Jc(u) + \sum_{i=1}^{n} \Jad_{i}(u^{i}) \eqfinv
  \end{align*}
  where $\Jad_{i}(u^{i}) = \expec{\jad_{i}(u^{i}, \va{W})}$. We apply the
  stochastic APP algorithm with an additive auxiliary function
  $K(u) = \sum_{i=1}^{n} K_{i}(u^{i})$. Let $\bar{u} \in \ban$ be given, a
  canonical choice for $K_{i}$ is:
  \begin{align*}
    K_{i}(u^{i}) = \Jc(\bar{u}^{1:i-1}, u^{i}, \bar{u}^{i+1:n}), \quad i \in \na{1, \ldots, n}  \eqfinv
  \end{align*}
  where $\bar{u}^{i:j} = (\bar{u}^{i}, \ldots, \bar{u}^{j})$ for
  $1\leq i \leq j \leq n$ and $\bar{u}^{1:0}$ denotes the empty vector by
  convention. Another classical choice is $K = \frac{1}{2}\sqnorm{\cdot}$. With
  an additive function $K$, the auxiliary problem~\eqref{pb:stoch-pbsansc-ppa}
  can be split into $n$ independent subproblems that can be solved in
  parallel. At iteration $k$ of the stochastic APP algorithm, the $i$-th
  subproblem is:
  \begin{align}
    \min_{u^{i} \in \Uad_{i}} K_{i}(u^{i}) + \proscal{\varepsilon_{k}(g_{k}^{i} + r_{k}^{i}) - \nabla K_{i} (u_{k}^{i})}{u^{i}} + \varepsilon_{k} \jad_{i}(u^{i}, w_{k+1}) \eqfinv
    \label{eq:stoch-ppa_subpb}
  \end{align}
  where $g_{k}^{i} \in \partial_{u^{i}} \jc(u_{k}, w_{k+1})$ and $r_{k}^{i}$ is an additive error on $\partial_{u^{i}} \jc(u_{k}, w_{k+1})$. This example shows that the stochastic APP encompasses decomposition techniques.
\end{example}

\section{Measurability of the iterates of the stochastic APP algorithm}
\label{sec:mes}

Convergence results for SA algorithms often consist in proving the almost sure
convergence of the sequence of iterates $\ba{\va{U}_{k}}_{k \in \bbN}$ to the
optimal value $u\opt$. Other results provide non-asymptotic bounds for the
expectation of function values $\espe\bp{\J(\va{U}_{k}) -\J(u\opt)}$ or the
quadratic mean $\expec{\sqnorm{\va{U}_{k} - u\opt}}$ for example.  In order for
these expectations and probabilities to be well-defined, $\va{U}_{k}$ must be a
measurable mapping from $\omeg$ to $\ban$. However, {as far as we know,
  the current literature does not provide constructive conditions under which
  the measurability of $\va{U}_{k}$ is ensured}. {We aim at filling this
  theoretical gap by proving} the measurability of the iterates of the
stochastic APP algorithm.

\subsection{A general measurability result}

This section is devoted to the proof of a general measurability result in
Theorem~\ref{thm:mes}. We obtain the measurability of the iterates of the
stochastic APP algorithm as a consequence in
Theorem~\ref{thm:mes_selec_sto_app}.

Recall that $(\omeg, \trib, \prbt)$ is a probability space and that
$(\messpace, \borel{\messpace})$ is a measurable topological vector space. The
Banach space $\ban$ is equipped with the Borel $\sigma$-field
$\borel{\ban}$. The topological dual of $\ban$, {equipped with the
  topology of the norm induced by the primal norm}, is denoted by $\ban\dual$,
and its Borel $\sigma$-field is $\borel{\ban\dual}$. We consider the following
problem:
\begin{align}
  \min_{u \in \Uad} \left\{\Phi(\omega, u) \defegal K(u) + \proscal{\va{\varphi}(\omega)}{u} + \varepsilon \jad\bp{u, \va{W}(\omega)} \right\}\eqfinv
  \label{eq:pb_interest}
\end{align}
where $\varepsilon > 0$ is a given positive real {number} and
$\va{\varphi} : \omeg \rightarrow \ban\dual$ is a given measurable function. The
goal is to show the existence of a measurable mapping $\widetilde{\va{U}}$ such
that for all
$\omega \in \omeg, \ \widetilde{\va{U}}(\omega) \in \argmin_{u \in \Uad}
\Phi(\omega, u)$. The mapping
$\omega \mapsto \argmin_{u \in \Uad} \Phi(\omega, u)$ is a set-valued mapping.

\subsubsection{Some tools from the theory of set-valued mappings}
\label{subsubsec:tools}

We recall some results from the theory of set-valued mappings that are used to
state and prove the measurability result of Theorem~\ref{thm:mes}. The
definitions and propositions are mostly taken
from~\cite{castaing_convex_1977,hess_measurability_1995}.
{Theorem~\ref{thm:mes} requires
$\ban$ to be a reflexive separable Banach space. However, all results
from~\S\ref{subsubsec:tools} are more generally valid for $\ban$ being
a Polish space.}
For two sets $X, \ Y$, we denote by
$\Gamma : X \rightrightarrows Y$ a set-valued mapping $\Gamma$ from $X$ to
$Y$. This means that for $x \in X,\ \Gamma(x) \subset Y$ or in other words that
$\Gamma(x) \in \mathcal{P}(Y)$ where $\mathcal{P}(Y)$ is the power set of $Y$.

\begin{definition}[Measure completion]
  Let $(\omeg, \trib)$ be a measurable space.
  \begin{itemize}
  \item Let $\mu$ be a measure on $(\omeg, \trib)$. The \emph{$\mu$-completion} of
    $\trib$ is the $\sigma$-field $\trib_{\mu}$ generated by
    $\trib \cup \na{A' \in \mathcal{P}(\omeg) \,\vert\, A' \subset A, A \in \trib \text{ and } \mu(A) = 0}$, that is, the union of $\trib$
    and the $\mu$-negligible sets. The $\sigma$-field $\trib$ is said to be \emph{complete} for
    the measure $\mu$ if  $\trib = \trib_{\mu}$.
    \item The $\sigma$-field $\hat{\trib}$ of universally measurable sets is defined by $\hat{\trib} = \bigcap_{\mu} \trib_{\mu}$ where $\mu$ ranges over the set of positive $\sigma$-finite measures on the measurable space $(\omeg, \trib)$.
  \end{itemize}
\end{definition}

\begin{definition}[Measurable selection]
  Let $(\omeg, \trib)$ be a measurable space and $\ban$ be a separable
  {Banach} space. Let $\Gamma : \omeg \rightrightarrows \ban$ be a
  set-valued mapping. A function $\gamma : \omeg \rightarrow \ban$ is a
  measurable selection of $\Gamma$ if $\gamma(\omega) \in \Gamma(\omega)$ for
  all $\omega \in \omeg$ and $\gamma$ is measurable.
\end{definition}

\begin{definition} [Measurable mapping]
  \label{def:mes}
  Let $(\omeg, \trib)$ be a measurable space and $\ban$ be a
  separable {Banach} space. A set-valued mapping $\Gamma : \Omega \rightrightarrows \ban$ is
  Effros-measurable if, for every open set $O \subset \ban$, we have:
  \begin{align*}
    \Gamma^{-}(O) = \ba{\omega \in \omeg, \ \Gamma(\omega) \cap O \neq \emptyset} \in \trib \eqfinp
  \end{align*}
\end{definition}

\begin{remark}
  The Effros-measurability of a set-valued mapping $\Gamma : \omeg \rightrightarrows \ban$
  is equivalent to the measurability of $\Gamma$ viewed as a function from $\omeg$ to $\mathcal{P}(\ban)$.
\end{remark}

\begin{proposition}\cite[Theorem III.9]{castaing_convex_1977}
  \label{prop:cast_rep}
  Let $(\omeg, \trib)$ be a measurable space and $\ban$ be a separable Banach
  space. Let $\Gamma : \omeg \rightrightarrows \ban$ be a non-empty-valued and
  closed-valued mapping. Then the following statements are equivalent:
  \begin{enumerate}[label=(\roman*)]
  \item $\Gamma$ is
    Effros-measurable.
  \item $\Gamma$ admits a Castaing representation: there exists a sequence of
    measurable functions $\na{\gamma_{n}}_{n \in \bbN}$ such that for all
    $\omega \in \omeg, \ \Gamma(\omega) = \cl\ba{\gamma_{n}(\omega), \ n \in
      \bbN}$ where $\cl$ denotes the closure of a set.
  \end{enumerate}
\end{proposition}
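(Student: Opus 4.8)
This is the classical Castaing representation theorem, so my plan is to establish the two implications separately, the engine of the difficult direction being a measurable selection theorem of Kuratowski--Ryll-Nardzewski type. I would likely follow the Castaing--Valadier argument that is being cited.

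The implication $(\mathrm{ii})\Rightarrow(\mathrm{i})$ is elementary. Suppose $\Gamma(\omega)=\cl\ba{\gamma_{n}(\omega):n\in\bbN}$ with each $\gamma_{n}$ measurable. The key observation is that for an \emph{open} set $O$ and any set $S$, one has $\cl(S)\cap O\neq\emptyset$ if and only if $S\cap O\neq\emptyset$: a point of $\cl(S)$ lying in the open set $O$ has a neighbourhood inside $O$ that must meet $S$. Applying this with $S=\ba{\gamma_{n}(\omega)}$ gives
\begin{align*}
  \Gamma^{-}(O)=\ba{\omega\in\omeg:\exists n,\ \gamma_{n}(\omega)\in O}=\bigcup_{n\in\bbN}\gamma_{n}^{-1}(O)\eqfinp
\end{align*}
Each $\gamma_{n}^{-1}(O)$ lies in $\trib$ by measurability of $\gamma_{n}$, and a countable union stays in $\trib$, so $\Gamma$ is Effros-measurable.

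For $(\mathrm{i})\Rightarrow(\mathrm{ii})$, I would first prove (or invoke) a measurable selection theorem: an Effros-measurable, nonempty- and closed-valued multifunction into the complete separable space $\ban$ admits a measurable selection. The construction is by successive approximation using a fixed countable dense set $\ba{a_{k}}\subset\ban$: one sets $f_{0}(\omega)=a_{k}$ for the least $k$ with $\Gamma(\omega)\cap B(a_{k},1)\neq\emptyset$ (measurable, since its level sets are finite Boolean combinations of the sets $\Gamma^{-}(B(a_{k},1))\in\trib$), and then refines $f_{p}$ into $f_{p+1}$ so that $\mathrm{dist}(f_{p+1}(\omega),\Gamma(\omega))<2^{-(p+1)}$ while keeping $\norm{f_{p+1}(\omega)-f_{p}(\omega)}<2^{-p+1}$. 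The sequence is uniformly Cauchy, its limit $f$ is measurable, and $f(\omega)\in\Gamma(\omega)$ because $\Gamma(\omega)$ is closed.

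To upgrade a single selection to a \emph{dense} family, I would apply this selection theorem to a countable collection of auxiliary multifunctions. Since $\ban$ is separable it is second countable; fixing a countable base $\ba{O_{j}}$ of open sets of arbitrarily small diameter, I set
\begin{align*}
  \Gamma_{j}(\omega)=\cl\bp{\Gamma(\omega)\cap O_{j}}\ \text{if}\ \omega\in\Gamma^{-}(O_{j}),\qquad \Gamma_{j}(\omega)=\Gamma(\omega)\ \text{otherwise}\eqfinp
\end{align*}
Using the same open-set lemma as above one checks that $\Gamma_{j}^{-}(V)=\Gamma^{-}(O_{j}\cap V)\cup\bp{\Gamma^{-}(V)\setminus\Gamma^{-}(O_{j})}\in\trib$ for every open $V$, so each $\Gamma_{j}$ is Effros-measurable, nonempty- and closed-valued, hence admits a measurable selection $\gamma_{j}$. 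Then $\gamma_{j}(\omega)\in\Gamma_{j}(\omega)\subset\Gamma(\omega)$ gives $\cl\ba{\gamma_{j}(\omega)}\subset\Gamma(\omega)$; conversely, given $x\in\Gamma(\omega)$ and $\varepsilon>0$, choosing a base element $O_{j}\ni x$ of diameter $<\varepsilon$ forces $\omega\in\Gamma^{-}(O_{j})$ and $\gamma_{j}(\omega)\in\cl(O_{j})$, so $\norm{\gamma_{j}(\omega)-x}<\varepsilon$. Hence $\Gamma(\omega)=\cl\ba{\gamma_{j}(\omega):j\in\bbN}$, the desired Castaing representation. The main obstacle is the selection theorem itself: its successive-approximation construction is where separability (to drive the measurability of each approximation via a countable base) and completeness (to pass to the Cauchy limit and land inside the closed value $\Gamma(\omega)$) are genuinely used, and tracking measurability through each refinement is the delicate bookkeeping. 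The open-set lemma and the Effros-measurability of the $\Gamma_{j}$ are then routine.
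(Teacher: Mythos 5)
The paper states this result purely as a citation of Theorem III.9 in Castaing--Valadier and gives no proof of its own, so there is nothing in-paper to compare against; your argument correctly reconstructs the classical proof from that source. Both halves are sound: the easy direction rests on the valid observation that an open set meets $\cl(S)$ iff it meets $S$, and the hard direction correctly combines a Kuratowski--Ryll-Nardzewski-type selection theorem (where the separability and completeness of $\ban$ are indeed the essential ingredients) with the auxiliary multifunctions $\Gamma_{j}$ built from a countable base of small-diameter open sets.
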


\begin{remark}
  { An important consequence of Proposition~\ref{prop:cast_rep} is that
    any Effros-measurable mapping admits a measurable selection. This result is
    usually known as the Kuratowski–Ryll-Nardzewski selection
    theorem~\cite{kuratowski_general_1965}.  }
\end{remark}

\begin{proposition}\cite[Proposition III.23: Sainte-Beuve’s projection
  theorem]{castaing_convex_1977} Let $(\omeg, \trib)$ be a measurable space and
  $(\ban, \borel{\ban})$ be a separable Banach space equipped with its Borel
  $\sigma$-field. Let $G \in \trib \otimes \borel{\ban}$. Denote by
  $\proj{\omeg}{G}$ the projection of $G$ on $\omeg$. Then,
  $\proj{\omeg}{G} \in \hat{\trib}$, where we recall that $\hat{\trib}$ is the
  $\sigma$-field of universally measurable sets.
  \label{prop:proj_thm}
\end{proposition}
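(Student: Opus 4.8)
The statement is the classical measurable projection theorem, and I would prove it by exhibiting $\proj{\omeg}{G}$ as a $\trib$-analytic set and then invoking the fact that $\trib$-analytic sets are universally measurable. The tool throughout is the Suslin operation $\mathcal{A}$: for a paving $\mathcal{E}$ of subsets of a fixed set, $\mathcal{A}(\mathcal{E})$ denotes all sets of the form $\bigcup_{\sigma} \bigcap_{n \geq 1} E_{\sigma|n}$, where $\sigma$ ranges over infinite sequences of positive integers, $\sigma|n$ is its length-$n$ prefix, and $s \mapsto E_{s} \in \mathcal{E}$ is a scheme indexed by finite sequences $s$. The only structural input on $\ban$ is that a separable Banach space is a Polish space, so that $\ban$ carries a complete separable metric and its open sets are countable unions of closed sets ($F_{\sigma}$).

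First I would represent $G$ as a Suslin set over rectangles. Let $\mathcal{R}$ be the paving on $\omeg \times \ban$ of rectangles $A \times F$ with $A \in \trib$ and $F \subseteq \ban$ closed. The family $\mathcal{A}(\mathcal{R})$ is stable under countable unions and countable intersections and contains $\mathcal{R}$. Because open subsets of the metric space $\ban$ are $F_{\sigma}$, the complement of any rectangle is a countable union of rectangles, hence lies in $\mathcal{A}(\mathcal{R})$; consequently the sets $E$ with both $E \in \mathcal{A}(\mathcal{R})$ and $E^{c} \in \mathcal{A}(\mathcal{R})$ form a $\sigma$-field containing $\mathcal{R}$, and therefore contain $\sigma(\mathcal{R}) = \trib \otimes \borel{\ban}$. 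In particular $G \in \mathcal{A}(\mathcal{R})$, so there is a scheme $s \mapsto R_{s} = A_{s} \times F_{s}$ in $\mathcal{R}$ with
\[
  G = \bigcup_{\sigma} \bigcap_{n \geq 1} \np{A_{\sigma|n} \times F_{\sigma|n}} \eqfinp
\]
Using completeness of $\ban$, I would refine the scheme so that, along each branch $\sigma$, the closed sets $F_{\sigma|n}$ are nested with diameters tending to zero.

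Next I would push the projection through the operation. Projection onto $\omeg$ commutes with unions, and for a fixed branch $\bigcap_{n} (A_{\sigma|n} \times F_{\sigma|n}) = \np{\bigcap_{n} A_{\sigma|n}} \times \np{\bigcap_{n} F_{\sigma|n}}$. The refinement ensures, by Cantor's intersection theorem in the complete space $\ban$, that $\bigcap_{n} F_{\sigma|n}$ is nonempty (a single point) exactly when the base intersection $\bigcap_{n} A_{\sigma|n}$ is nonempty, so that $\proj{\omeg}{\bigcap_{n}(A_{\sigma|n} \times F_{\sigma|n})} = \bigcap_{n} A_{\sigma|n}$. Taking the union over $\sigma$ gives
\[
  \proj{\omeg}{G} = \bigcup_{\sigma} \bigcap_{n \geq 1} A_{\sigma|n} \in \mathcal{A}(\trib) \eqfinv
\]
that is, $\proj{\omeg}{G}$ is $\trib$-analytic.

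Finally I would conclude by universal measurability of analytic sets. Fix a $\sigma$-finite measure $\mu$ on $(\omeg, \trib)$; the induced outer measure $\mu^{*}$ is a capacity relative to the paving $\trib$, and by Choquet's capacitability theorem every set in $\mathcal{A}(\trib)$ is $\mu^{*}$-capacitable, hence $\mu$-measurable, i.e.\ it belongs to the completion $\trib_{\mu}$. Since $\mu$ was arbitrary, $\proj{\omeg}{G} \in \bigcap_{\mu} \trib_{\mu} = \hat{\trib}$. The main obstacle is the combination of the second and third steps: the delicate point is the fibre analysis, where one must use the complete metric structure of $\ban$ to convert a nonempty intersection of the base sets $A_{\sigma|n}$ into an actual point of the fibre $\bigcap_{n} F_{\sigma|n}$, since without the diameter-shrinking refinement this intersection could be empty and the projection would only be contained in $\bigcup_{\sigma}\bigcap_{n} A_{\sigma|n}$. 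The capacitability input in the last step is deep but may be cited as a black box.
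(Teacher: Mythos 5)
This proposition is not proved in the paper at all: it is imported verbatim from Castaing--Valadier as Proposition III.23 (Sainte-Beuve's projection theorem), so there is no in-paper argument to compare against. Your proof is the standard one and is essentially the route taken in the cited source: represent $G$ by the Suslin operation over rectangles $A \times F$ with $A \in \trib$ and $F$ closed, push the projection through a regularized scheme to exhibit $\proj{\omeg}{G}$ as a $\trib$-analytic set, and conclude by Choquet's capacitability theorem that $\trib$-analytic sets lie in every completion $\trib_{\mu}$, hence in $\hat{\trib}$. The only structural input you use on $\ban$ is that a separable Banach space is Polish (note that the scheme refinement uses separability, to cover each $F_{s}$ by countably many closed sets of small diameter, while completeness is what feeds Cantor's theorem); Castaing--Valadier prove the statement in the slightly greater generality of Suslin spaces, but the Polish case suffices here.

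One step of your sketch is misstated and needs the standard repair. Cantor's intersection theorem does not make $\bigcap_{n} F_{\sigma|n}$ nonempty ``exactly when'' $\bigcap_{n} A_{\sigma|n}$ is nonempty: the two intersections are a priori unrelated, and if some $F_{\sigma|n}$ is empty while $\bigcap_{n} A_{\sigma|n}$ is not, then $\bigcup_{\sigma}\bigcap_{n} A_{\sigma|n}$ strictly overshoots the projection. The fix is to prune the scheme: replace $A_{s}$ by $\emptyset$ whenever $F_{s} = \emptyset$ (this keeps $A_{s} \in \trib$ and does not change the set $\bigcup_{\sigma}\bigcap_{n}\np{A_{\sigma|n}\times F_{\sigma|n}}$), and make the scheme monotone so that along every surviving branch the sets $F_{\sigma|n}$ are nonempty, nested and of vanishing diameter. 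Then Cantor applies unconditionally on surviving branches, each fibre is a singleton, and the projection identity $\proj{\omeg}{G} = \bigcup_{\sigma}\bigcap_{n} A_{\sigma|n}$ holds. With that correction, and with the capacitability theorem accepted as a black box, your argument is complete and faithful to the proof in the reference the paper cites.
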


\begin{proposition}\cite[Proposition III.30]{castaing_convex_1977} \label{prop:mes_closed}
  Let $(\omeg, \trib, \mu)$ be a measure space where $\trib$ is a complete
  $\sigma$-field, that is, $\trib = \trib_{\mu}$ and let $\ban$ be a separable
  Banach space. Let $\Gamma : \omeg \rightrightarrows \ban$ be a non-empty
  valued and closed-valued mapping. The following statements are equivalent:
  \begin{enumerate}[label=(\roman*)]
    \item $\Gamma$ is Effros-measurable.\label{effros_mes}
    \item \label{closed_mes} For every closed set $C \subset \ban$, we have:
    \begin{align*}
      \Gamma^{-}(C) = \na{\omega \in \omeg, \ \Gamma(\omega) \cap C \neq \emptyset} \in \trib \eqfinp
    \end{align*}
  \end{enumerate}
\end{proposition}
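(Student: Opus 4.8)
The plan is to establish the two implications separately, noting that \ref{closed_mes} $\Rightarrow$ \ref{effros_mes} is elementary and does not use completeness, whereas the reverse implication \ref{effros_mes} $\Rightarrow$ \ref{closed_mes} is the substantive part, and is exactly where the completeness of $\trib$ and the projection theorem (Proposition~\ref{prop:proj_thm}) enter.

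For \ref{closed_mes} $\Rightarrow$ \ref{effros_mes}, I would use that $\ban$, being a separable Banach space, is a metric space in which every open set $O$ is an $F_{\sigma}$, that is, $O = \bigcup_{n \in \bbN} C_{n}$ for a countable family of closed sets $C_{n}$. The lower inverse $\Gamma^{-}$ commutes with countable unions: $\Gamma(\omega) \cap O \neq \emptyset$ if and only if $\Gamma(\omega) \cap C_{n} \neq \emptyset$ for some $n$, so that $\Gamma^{-}(O) = \bigcup_{n \in \bbN} \Gamma^{-}(C_{n})$. Each $\Gamma^{-}(C_{n})$ belongs to $\trib$ by \ref{closed_mes}, hence so does the countable union, which yields Effros-measurability.

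For the converse \ref{effros_mes} $\Rightarrow$ \ref{closed_mes}, the key idea is to pass through the graph of $\Gamma$. Since $\Gamma$ is Effros-measurable, non-empty-valued and closed-valued on the separable Banach space $\ban$, Proposition~\ref{prop:cast_rep} furnishes a Castaing representation $\na{\gamma_{n}}_{n \in \bbN}$ with $\Gamma(\omega) = \cl\ba{\gamma_{n}(\omega), \ n \in \bbN}$. I would then form $\psi(\omega, u) = \inf_{n \in \bbN} \norm{u - \gamma_{n}(\omega)}$, which equals the distance from $u$ to $\Gamma(\omega)$; it is $\trib \otimes \borel{\ban}$-measurable as a countable infimum of jointly measurable functions, each $(\omega, u) \mapsto \norm{u - \gamma_{n}(\omega)}$ being measurable in $\omega$ and continuous in $u$. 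Since $u \in \Gamma(\omega)$ precisely when $\psi(\omega, u) = 0$, the graph $\gph \Gamma = \bas{(\omega, u)}{\psi(\omega, u) = 0}$ lies in $\trib \otimes \borel{\ban}$. For a closed set $C \subset \ban$, one then writes $\Gamma^{-}(C) = \proj{\omeg}{\gph \Gamma \cap (\omeg \times C)}$; the set $\gph \Gamma \cap (\omeg \times C)$ belongs to $\trib \otimes \borel{\ban}$, so Proposition~\ref{prop:proj_thm} shows its projection is universally measurable, i.e.\ lies in $\hat{\trib}$. Finally, since $\prbt$ is a ($\sigma$-finite) probability measure, $\hat{\trib} = \bigcap_{\mu} \trib_{\mu} \subset \trib_{\prbt} = \trib$ by completeness, whence $\Gamma^{-}(C) \in \trib$.

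The main obstacle is concentrated in this second implication. The tempting shortcut---writing the closed set $C$ as the intersection of its open $1/n$-neighborhoods $O_{n}$ and using $\Gamma^{-}(C) = \bigcap_{n} \Gamma^{-}(O_{n})$---fails, because $\Gamma^{-}$ does \emph{not} commute with intersections: the condition $\dist{}{C, \Gamma(\omega)} = 0$ does not force $\Gamma(\omega) \cap C \neq \emptyset$ when $\Gamma(\omega)$ and $C$ are merely closed and not compact. This is precisely why the argument must be routed through the measurable graph and the Sainte-Beuve projection theorem, and why the completeness of $\trib$ is indispensable: the projection of a product-measurable set is only guaranteed to be universally measurable, and it is completeness that lets us descend from $\hat{\trib}$ back into $\trib$.
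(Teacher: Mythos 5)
Your proof is correct; the paper states this proposition without proof (it is quoted from Castaing--Valadier), but the remark that follows it sketches precisely the route you take for the substantive direction \ref{effros_mes}$\Rightarrow$\ref{closed_mes}, namely passing through the product-measurable graph and Sainte-Beuve's projection theorem (Proposition~\ref{prop:proj_thm}), with completeness used only to descend from $\hat{\trib}$ back to $\trib$, while \ref{closed_mes}$\Rightarrow$\ref{effros_mes} is elementary. The one available streamlining is that the joint measurability of $\gph \Gamma$ is already recorded in the paper as Proposition~\ref{prop:gph_joint_mes}, so your explicit distance-function construction of the graph via the Castaing representation, though correct, could be replaced by a citation.
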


\begin{remark}
  When $\espacea{U}$ is finite-dimensional, Proposition~\ref{prop:mes_closed} is
  true in any measurable space $(\omeg, \trib)$; that is, the completeness
  assumption of the $\sigma$-field $\trib$ is not needed \cite[Theorem
  14.3]{rockafellar_variational_2004}. In the infinite-dimensional setting,
  \ref{closed_mes} implies~\ref{effros_mes} remains true in any measurable space
  $(\omeg, \trib)$~\cite[Proposition III.11]{castaing_convex_1977}. The
  completeness assumption is only required to prove \ref{effros_mes}
  implies~\ref{closed_mes} when $\espacea{U}$ is
  infinite-dimensional. Essentially, in the finite-dimensional case, the proof
  of \ref{effros_mes} implies~\ref{closed_mes} relies on the fact that
  $\espacea{U}$ is locally compact. In the infinite-dimensional case,
  $\espacea{U}$ is not locally compact and the proof uses the Sainte-Beuve's
  projection theorem.
\end{remark}

\begin{definition} [Graph and epigraph]
  Let $(\espacea{X}, \tribu{X})$ be a measurable space and $\ban$ be a Banach
  space. Let $h: \espacea{X} \rightarrow \bbR \cup \{+\infty\}$ be a function
  and $\Gamma: \espacea{X} \rightrightarrows \ban$ be a set-valued mapping.
  \begin{itemize}
    \item The graph and the epigraph of $h$ are respectively defined by:
    \begin{align*}
      \gph h &= \ba{(x, \alpha) \in \espacea{X} \times \bbR, \ h(x) = \alpha}\eqfinv\\
      \epi h &= \ba{(x, \alpha) \in \espacea{X} \times \bbR, \ h(x) \leq \alpha}\eqfinp
    \end{align*}
    \item The graph of $\Gamma$ is defined by:
    \begin{align*}
      \gph \Gamma = \ba{(x, u) \in \espacea{X} \times \ban, \ u \in \Gamma(x)}\eqfinp
    \end{align*}
  \end{itemize}
\end{definition}

\begin{definition} [Normal integrand]
  \label{def:norm_int}
  Let $(\omeg, \trib)$ be a measurable space and $\ban$ be a Banach space. A
  function $f: \omeg \times \ban \rightarrow \bbR \cup \{+\infty\}$ is a normal
  integrand if it satisfies the following conditions:
  \begin{enumerate}[label=(\roman*)]
  \item\label{lsc_norm_int} For all $\omega \in \omeg, \ f(\omega, \cdot)$ is \lsc,
  \item\label{epi_mes} The epigraphical mapping
    $S_{f} : \omeg \rightrightarrows \ban \times \bbR$ defined by
    $S_{f}(\omega) = \epi f(\omega, \cdot)$ is
    Effros-measurable.
  \end{enumerate}
\end{definition}

\begin{remark}
  The point~\ref{lsc_norm_int} of Definition~\ref{def:norm_int} is equivalent to
  $S_{f}$ being closed-valued. In this paper, we consider the definition of the
  normal integrand used by Hess~\cite{hess_measurability_1995}. It differs from
  the definition of Castaing~\cite{castaing_convex_1977} where the
  point~\ref{epi_mes} is replaced by the
  $\trib \otimes \borel{\ban}$-measurability of $f$. We shall see in
  Proposition~\ref{prop:joint_mes} that the Effros-measurability of the
  epigraphical mapping $S_{f}$ implies the
  $\trib \otimes \borel{\ban}$-measurability of $f$. Note also that if $\trib$
  is complete for a positive $\sigma$-finite measure $\mu$, these two
  definitions are equivalent, see~\cite[Proposition
  III.30]{castaing_convex_1977}.
\end{remark}

\begin{definition} [Carath\'{e}odory integrand]
  Let $(\omeg, \trib)$ be a measurable space and $\ban$ be a separable Banach
  space. A function $f : \omeg \times \ban \rightarrow \bbR$ (finite-valued) is
  a Carath\'{e}odory integrand if it satisfies the following conditions:
  \begin{enumerate}[label=(\roman*)]
    \item For all $u \in \ban, \ f(\cdot, u)$ is measurable.
    \item For all $\omega \in \omeg, \ f(\omega, \cdot)$ is continuous.
  \end{enumerate}
\end{definition}

\begin{proposition}\cite[Proposition 2.5]{hess_measurability_1995}
  If $f$ is a Carath\'{e}odory integrand, then it is a normal integrand.
\end{proposition}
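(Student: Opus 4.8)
The plan is to verify directly the two defining conditions of a normal integrand from Definition~\ref{def:norm_int}. Condition~\ref{lsc_norm_int} is immediate: for each fixed $\omega \in \omeg$ the map $f(\omega, \cdot)$ is continuous by assumption, hence in particular \lsc. Consequently the epigraphical mapping $S_{f}(\omega) = \epi f(\omega, \cdot)$ is closed-valued (the epigraph of a finite continuous function is closed), and it is trivially non-empty-valued since $(u, f(\omega, u)) \in S_{f}(\omega)$ for every $u \in \ban$. The whole difficulty therefore lies in establishing condition~\ref{epi_mes}, namely the Effros-measurability of $S_{f}$ seen as a set-valued mapping from $\omeg$ into the separable Banach space $\ban \times \bbR$.

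To this end I would invoke the equivalence of Proposition~\ref{prop:cast_rep}: since $S_{f}$ is non-empty- and closed-valued into a separable Banach space, it is Effros-measurable \IFF it admits a Castaing representation. So the task reduces to the explicit construction of a countable family of measurable selections of $S_{f}$ whose closure is, pointwise, the whole epigraph. Here I would exploit the separability of $\ban$: fix a countable dense subset $\na{u_{n}}_{n \in \bbN}$ of $\ban$ and an enumeration $\na{r_{j}}_{j \in \bbN}$ of the non-negative rationals, and set, for each pair $(n, j)$,
\[
  \gamma_{n,j}(\omega) = \bp{u_{n}, \, f(\omega, u_{n}) + r_{j}} \in \ban \times \bbR \eqfinp
\]
Each $\gamma_{n,j}$ is measurable because $f(\cdot, u_{n})$ is measurable by the first property of a Carathéodory integrand, while $u_{n}$ and $r_{j}$ are constants; and because $r_{j} \geq 0$, each value $\gamma_{n,j}(\omega)$ indeed lies in $\epi f(\omega, \cdot)$.

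It then remains to check that $\cl\ba{\gamma_{n,j}(\omega) : n, j \in \bbN} = \epi f(\omega, \cdot)$ for every $\omega$. I would fix $\omega$, a point $(u, \alpha)$ with $\alpha \geq f(\omega, u)$, and $\delta > 0$; by density of $\na{u_{n}}$ and continuity of $f(\omega, \cdot)$ one can pick $u_{n}$ with $\norm{u_{n} - u} < \delta$ and $\abs{f(\omega, u_{n}) - f(\omega, u)} < \delta$. If $\alpha \geq f(\omega, u_{n})$, choose a non-negative rational $r_{j}$ within $\delta$ of $\alpha - f(\omega, u_{n})$; otherwise $0 \leq f(\omega, u_{n}) - \alpha < \delta$ and I take $r_{j} = 0$. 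In either case $\gamma_{n,j}(\omega)$ falls within $O(\delta)$ of $(u, \alpha)$, which yields the claimed density. Proposition~\ref{prop:cast_rep} then gives the Effros-measurability of $S_{f}$, so $f$ is a normal integrand.

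I expect the main obstacle to be precisely this construction of the Castaing representation, and specifically getting the second coordinate right so that the selections both remain inside the epigraph (which forces $r_{j} \geq 0$) and fill it densely. The delicate case is when the approximating height $f(\omega, u_{n})$ slightly overshoots the target $\alpha$: one cannot then match $\alpha - f(\omega, u_{n})$ by a non-negative rational, which is why the choice $r_{j} = 0$ must be allowed and why continuity is used to keep the overshoot below $\delta$.
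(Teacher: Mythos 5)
Your proof is correct: the paper itself gives no argument for this proposition (it is simply cited from Hess, Proposition~2.5), and your construction of a Castaing representation of the epigraphical mapping via a countable dense subset of $\ban$ and non-negative rational offsets is the standard way to prove it, with the delicate overshoot case ($\alpha < f(\omega,u_{n})$) handled properly by continuity and the choice $r_{j}=0$. The only points worth making explicit are that the reverse inclusion $\cl\{\gamma_{n,j}(\omega)\}\subseteq S_{f}(\omega)$ uses the closedness of the epigraph, and that the measurability of each $\gamma_{n,j}$ into $\ban\times\bbR$ rests on the identification of the Borel $\sigma$-field of the product with the product of the Borel $\sigma$-fields (Proposition~\ref{prop:borel_prod}), both of which you essentially note.
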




\begin{proposition}\cite[Proposition III.13]{castaing_convex_1977}
  Let $(\omeg, \trib)$ be a measurable space and $(\ban, \borel{\ban})$ be a
  separable Banach space equipped with its Borel $\sigma$-field. If
  $\Gamma : \omeg \rightrightarrows \ban$ is an Effros-measurable, closed-valued
  mapping, then $\gph \Gamma \in \trib \otimes \borel{\ban}$.
  \label{prop:gph_joint_mes}
\end{proposition}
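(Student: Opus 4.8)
The plan is to reduce membership in $\Gamma(\omega)$ to the vanishing of a jointly measurable distance function, using the Castaing representation supplied by Proposition~\ref{prop:cast_rep}. First I would isolate the fibers where $\Gamma$ is empty: the set $\na{\omega \in \omeg : \Gamma(\omega) \neq \emptyset} = \Gamma^{-}(\ban)$ belongs to $\trib$ by Effros-measurability applied to the open set $\ban$, and since empty fibers contribute nothing to the graph, we may assume $\Gamma$ is everywhere non-empty-valued. Then Proposition~\ref{prop:cast_rep} applies and yields a Castaing representation: a sequence $\na{\gamma_{n}}_{n \in \bbN}$ of measurable maps $\gamma_{n} : \omeg \rightarrow \ban$ with $\Gamma(\omega) = \cl\ba{\gamma_{n}(\omega), \ n \in \bbN}$ for every $\omega \in \omeg$.

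Since each $\Gamma(\omega)$ is closed and $\ba{\gamma_{n}(\omega)}_{n}$ is dense in it, a point $u$ lies in $\Gamma(\omega)$ exactly when its distance to that dense sequence vanishes. Introducing
\begin{align*}
  d(\omega, u) \defegal \inf_{n \in \bbN} \norm{u - \gamma_{n}(\omega)} \eqfinv
\end{align*}
one has $\gph \Gamma = \ba{(\omega, u) \in \omeg \times \ban : d(\omega, u) = 0}$, so the whole matter reduces to the joint measurability of $d$.

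For each fixed $n$, the map $(\omega, u) \mapsto \bp{\gamma_{n}(\omega), u}$ is measurable from $\bp{\omeg \times \ban, \trib \otimes \borel{\ban}}$ to $\bp{\ban \times \ban, \borel{\ban} \otimes \borel{\ban}}$, by measurability of $\gamma_{n}$ and of the coordinate projections, while the norm-difference $(v, u) \mapsto \norm{u - v}$ is continuous, hence Borel-measurable on $\ban \times \ban$. Composing the two shows that $(\omega, u) \mapsto \norm{u - \gamma_{n}(\omega)}$ is $\trib \otimes \borel{\ban}$-measurable, and a countable infimum preserves measurability; thus $d$ is $\trib \otimes \borel{\ban}$-measurable and $\gph \Gamma = d^{-1}\bp{\na{0}} \in \trib \otimes \borel{\ban}$.

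The delicate point — more a matter of care than a genuine obstacle — is that the composition in the last step requires the continuous norm-difference, a priori defined on $\bp{\ban \times \ban, \borel{\ban \times \ban}}$, to be read as a measurable map on $\bp{\ban \times \ban, \borel{\ban} \otimes \borel{\ban}}$. These two $\sigma$-fields coincide precisely because $\ban$ is separable; separability is thus exactly what both guarantees the existence of the Castaing representation and legitimizes the joint-measurability argument, and it is the hypothesis on which the whole proof turns.
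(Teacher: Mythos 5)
Your proof is correct. The paper itself gives no proof of this proposition---it is quoted directly from Castaing--Valadier---but your argument is exactly the classical one used there: pass to a Castaing representation via Proposition~\ref{prop:cast_rep} and express $\gph \Gamma$ as the zero set of the jointly measurable function $(\omega,u)\mapsto \inf_{n}\norm{u-\gamma_{n}(\omega)}$. You also handle the two points that are easy to overlook: the preliminary reduction to non-empty fibers (needed because Proposition~\ref{prop:cast_rep} assumes non-empty values, while the statement here does not), and the identification $\borel{\ban\times\ban}=\borel{\ban}\otimes\borel{\ban}$, which is where separability enters and which the paper records separately as Proposition~\ref{prop:borel_prod}. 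Nothing is missing.
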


We now recall a technical result on the Borel $\sigma$-field of a product space
that is used in the proof of subsequent propositions.

\begin{proposition}\cite[Proposition 7.13]{bertsekas_stochastic_1996}
  \label{prop:borel_prod}
  Let $\ba{\bp{\espacea{X}_{i}, \borel{\espacea{X}_{i}}}}_{i \in \bbN}$ be a
  sequence of measurable separable topological spaces equipped with their Borel
  $\sigma$-fields. For $n \in \bbN$, let
  $\espacea{Y}_{n} = \prod_{i=1}^{n} \espacea{X}_{i}$ and let
  $\espacea{Y} = \prod_{i \in \bbN} \espacea{X}_{i}$. Then, the Borel
  $\sigma$-field of the product space $\espacea{Y}_{n}$ (resp. $\espacea{Y}$)
  coincides with the product of the Borel $\sigma$-fields of
  $\na{\espacea{X}_{i}}_{i=1}^{n}$ (resp. $\na{\espacea{X}_{i}}_{i\in \bbN}$),
  that is:
  \begin{align*}
    \tribu{B}\vardelim{\espacea{Y}_{n}} = \bigotimes_{i=1}^{n} \borel{\espacea{X}_{i}} \quad \text{and} \quad \tribu{B}\vardelim{\espacea{Y}} = \bigotimes_{i \in \bbN} \borel{\espacea{X}_{i}}\eqfinp
  \end{align*}
\end{proposition}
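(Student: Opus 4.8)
The plan is to prove the two inclusions separately, and to do so uniformly for the finite product $\espacea{Y}_{n}$ and the countable product $\espacea{Y}$, since the argument depends only on second countability of the factors. First I would dispatch the routine inclusion $\bigotimes_{i} \borel{\espacea{X}_{i}} \subseteq \borel{\espacea{Y}}$ (and the analogous one for $\espacea{Y}_{n}$). For this, recall that each coordinate projection $\pi_{i} : \espacea{Y} \rightarrow \espacea{X}_{i}$ is continuous for the product topology, hence Borel measurable; since the product $\sigma$-field $\bigotimes_{i} \borel{\espacea{X}_{i}}$ is by definition the smallest $\sigma$-field making every $\pi_{i}$ measurable, and $\borel{\espacea{Y}}$ already makes every $\pi_{i}$ measurable, the inclusion follows immediately.

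The substantial direction is $\borel{\espacea{Y}} \subseteq \bigotimes_{i} \borel{\espacea{X}_{i}}$, and this is exactly where separability enters. Each $\espacea{X}_{i}$, being separable metrizable, is \emph{second countable}: it admits a countable base of open sets $\na{V_{i,m}}_{m \in \bbN}$. I would then build a countable base for the product topology on $\espacea{Y}$ out of the basic cylinders $\pi_{i_{1}}^{-1}(V_{i_{1}, m_{1}}) \cap \cdots \cap \pi_{i_{k}}^{-1}(V_{i_{k}, m_{k}})$, indexed by a finite coordinate set $\na{i_{1}, \ldots, i_{k}}$ and base indices $m_{1}, \ldots, m_{k}$. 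Because there are countably many finite subsets of $\bbN$ and countably many choices at each coordinate, this family is countable and forms a base for the product topology. For the finite product $\espacea{Y}_{n}$ the corresponding countable base is the family of boxes $\prod_{i=1}^{n} V_{i, m_{i}}$.

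Next I would observe that each such basic cylinder lies in $\bigotimes_{i} \borel{\espacea{X}_{i}}$, being a finite intersection of sets $\pi_{i}^{-1}(V_{i,m})$ that belong to the product $\sigma$-field by its very definition. Since the product topology is second countable, every open subset of $\espacea{Y}$ is a countable union of these basic cylinders, and a countable union of members of a $\sigma$-field is again a member. Hence every open set belongs to $\bigotimes_{i} \borel{\espacea{X}_{i}}$, and as $\borel{\espacea{Y}}$ is generated by the open sets, the desired inclusion follows. Combining the two inclusions yields the claimed equality, for both $\espacea{Y}_{n}$ and $\espacea{Y}$.

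I expect the main obstacle to be the construction underlying the claim that the product topology is second countable, specifically verifying that one may pass from countable bases of the factors to a countable base of the (possibly countably infinite) product. This is precisely the point at which separability is indispensable: without a countable base an open set need not be a countable union of boxes, and the reverse inclusion can genuinely fail. So the crux is to use separability to reduce ``every open set'' to ``a countable union of cylinders'', after which the remainder is routine bookkeeping with $\sigma$-field operations.
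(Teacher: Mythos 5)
Your proof is correct, and it is the standard argument; note that the paper does not prove this proposition at all but simply cites it from Bertsekas--Shreve, so there is no in-paper proof to compare against. Both inclusions are handled properly: continuity of the coordinate projections gives $\bigotimes_{i}\borel{\espacea{X}_{i}} \subseteq \borel{\espacea{Y}}$, and the reverse inclusion correctly hinges on second countability of the product, obtained from countable bases of the factors via finite-support cylinders. One small point of care: you derive second countability from the factors being \emph{separable metrizable}, whereas the statement as transcribed in the paper says only ``separable topological spaces''; for a general separable topological space separability does not imply second countability, so your argument (and the proposition itself) really requires metrizability or second countability of the factors. This matches the actual hypothesis in Bertsekas--Shreve and is satisfied everywhere the paper uses the result (separable Banach spaces and $\bbR$), so the gap is in the paper's phrasing rather than in your proof.
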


The following proposition shows that a normal integrand
$f: \omeg \times \ban \rightarrow \bbR \cup \{+\infty\}$, as defined in
\cite{hess_measurability_1995}, is jointly
$\trib \otimes \borel{\ban}$-measurable. This result is given in~\cite[Corollary
14.34]{rockafellar_variational_2004} when $\espacea{U} = \bbR^{n}$ but is
extended here in the Banach case.

\begin{proposition}
  Let $(\omeg, \trib)$ be a measurable space and $(\ban, \borel{\ban})$ be a
  separable Banach space equipped with its Borel $\sigma$-field. If
  $f: \omeg \times \ban \rightarrow \bbR \cup \{+\infty\}$ is a normal
  integrand, then $f$ is $\trib \otimes \borel{\ban}$-measurable.
  \label{prop:joint_mes}
\end{proposition}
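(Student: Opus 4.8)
The plan is to recognize the epigraph of $f$, viewed as a subset of $(\omeg \times \ban) \times \bbR$, as the \emph{graph} of the epigraphical set-valued mapping $S_f$, apply the graph-measurability result already available, and then strip off the last real coordinate by a sectioning argument.

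First I would record that since $f(\omega, \cdot)$ is \lsc{} for every $\omega$, its epigraph $S_f(\omega) = \epi f(\omega, \cdot)$ is closed, so that $S_f$ is closed-valued (this is exactly the content of the remark following Definition~\ref{def:norm_int}). By hypothesis $S_f$ is moreover Effros-measurable, and the product $\ban \times \bbR$ is again a separable Banach space. Hence Proposition~\ref{prop:gph_joint_mes} applies to the mapping $S_f : \omeg \rightrightarrows \ban \times \bbR$ and yields $\gph S_f \in \trib \otimes \borel{\ban \times \bbR}$.

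Next, using the canonical identification $\omeg \times (\ban \times \bbR) \cong (\omeg \times \ban) \times \bbR$, I would rewrite this graph as the epigraph of $f$:
\begin{align*}
  \gph S_f = \ba{(\omega, u, \alpha), \ (u,\alpha) \in \epi f(\omega, \cdot)} = \ba{(\omega, u, \alpha), \ f(\omega, u) \leq \alpha} = \epi f \eqfinp
\end{align*}
Since $\ban$ and $\bbR$ are separable, Proposition~\ref{prop:borel_prod} gives $\borel{\ban \times \bbR} = \borel{\ban} \otimes \borel{\bbR}$, so that $\trib \otimes \borel{\ban \times \bbR} = (\trib \otimes \borel{\ban}) \otimes \borel{\bbR}$, and therefore $\epi f \in (\trib \otimes \borel{\ban}) \otimes \borel{\bbR}$. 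Finally I would recover the measurability of $f$ by sectioning: for fixed $\alpha \in \bbR$, the mapping $(\omega, u) \mapsto (\omega, u, \alpha)$ is measurable from $(\omeg \times \ban, \trib \otimes \borel{\ban})$ into $(\omeg \times \ban \times \bbR, (\trib \otimes \borel{\ban}) \otimes \borel{\bbR})$, and the preimage of $\epi f$ under it is the sublevel set $\ba{(\omega, u), \ f(\omega, u) \leq \alpha}$, which thus belongs to $\trib \otimes \borel{\ban}$. As the sets $[-\infty, \alpha]$, $\alpha \in \bbR$, generate the Borel $\sigma$-field of $\bbR \cup \{+\infty\}$, this proves that $f$ is $\trib \otimes \borel{\ban}$-measurable.

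I expect the main obstacle to be conceptual rather than computational: the key insight is to see that the graph of the epigraphical mapping $S_f$ is \emph{literally} the epigraph of $f$ once the factors of the product are reshuffled, and then to justify the passage from product-measurability of $\epi f$ back to measurability of $f$ through sections. The verification that the hypotheses of Proposition~\ref{prop:gph_joint_mes} hold on the enlarged space $\ban \times \bbR$ (separability of the Banach space and closed-valuedness of $S_f$) is routine but should be stated explicitly, as that proposition is the only place where the infinite-dimensional, separable structure is genuinely used.
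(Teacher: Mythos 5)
Your proposal is correct and follows essentially the same route as the paper's own proof: apply Proposition~\ref{prop:gph_joint_mes} to the closed-valued, Effros-measurable epigraphical mapping $S_f$, identify $\gph S_f$ with $\epi f$, invoke Proposition~\ref{prop:borel_prod} to split $\borel{\ban \times \bbR}$, and recover measurability of $f$ from its sublevel sets. Your only addition is making the sectioning step $(\omega,u) \mapsto (\omega,u,\alpha)$ explicit, which the paper leaves implicit.
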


\begin{proof}
  The function $f$ is a normal integrand so its epigraphical mapping $S_{f}$ is
  Effros-measurable and closed-valued. Moreover $\ban$ is separable, so by
  Proposition~\ref{prop:gph_joint_mes}, we get that:
  \begin{align*}
    \gph S_{f} = \ba{(\omega, u, \alpha) \in \omeg \times \ban \times \bbR, \ f(\omega, u) \leq \alpha} \in \trib \otimes \borel{\ban \times \bbR}\eqfinp
  \end{align*}
  Using that $\ban$ and $\bbR$ are separable, we have
  $\borel{\ban \times \bbR} = \borel{\ban} \otimes \borel{\bbR}$ by
  Proposition~\ref{prop:borel_prod}. Then, for each $\alpha \in \bbR$, we get:
  \begin{align*}
    f^{-1}\bp{]-\infty, \alpha]} = \ba{(\omega, u) \in \omeg \times \ban, \ f(\omega, u) \leq \alpha} \in \trib \otimes \borel{\ban}\eqfinp
  \end{align*}
  This shows that $f$ is $\trib \otimes \borel{\ban}$-measurable.
\end{proof}

The following proposition is an adaptation of~\cite[Proposition
14.45(c)]{rockafellar_variational_2004} on the composition operations on normal
integrands to the Banach case. The separability of $\ban$ is a crucial
assumption that is used explicitly in the proof of
Proposition~\ref{prop:mes_comp} and that appears in most of the results of this
part. Essentially, as only a countable union of measurable sets is measurable,
countable dense subsets of a separable space are often used in proofs of
measurability. Moreover, in the infinite-dimensional setting, we must assume the
completeness of the $\sigma$-field $\trib$ because we appeal to
Proposition~\ref{prop:mes_closed} in the proof.

\begin{proposition}
  Let $(\omeg, \trib, \mu)$ be a measure space where $\trib$ is a complete
  $\sigma$-field, that is, $\trib = \trib_{\mu}$. Let
  $(\messpace, \borel{\messpace})$ be a topological measurable space and
  $(\ban, \borel{\ban})$ be a separable Banach space equipped with its Borel
  $\sigma$-field. Let
  $h : \ban \times \messpace \rightarrow \bbR \cup \{+\infty\}$ be \lsc\ and
  $\va{W}: \omeg \rightarrow \messpace$ be a measurable mapping. Then:
  \begin{align*}
    f: (\omega, u) \in \omeg \times \ban \mapsto h\bp{u, \va{W}(\omega)} \in \bbR \cup \{+\infty\}
  \end{align*}
  is a normal integrand.
  \label{prop:mes_comp}
\end{proposition}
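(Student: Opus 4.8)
The plan is to verify directly the two requirements of Definition~\ref{def:norm_int} for $f$: that each partial function $f(\omega,\cdot)$ is \lsc, and that the epigraphical mapping $S_{f}(\omega)=\epi f(\omega,\cdot)$ is Effros-measurable.

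First I would settle lower semicontinuity of the slices. For fixed $\omega$ the map $\iota_{\omega}:u\mapsto\bp{u,\va{W}(\omega)}$ is continuous from $\ban$ into the product $\ban\times\messpace$, and $f(\omega,\cdot)=h\comp\iota_{\omega}$. Since the composition of an \lsc\ function with a continuous map is \lsc\ (the preimage under $\iota_{\omega}$ of the open set $\ba{h>\alpha}$ is open), $f(\omega,\cdot)$ is \lsc; equivalently $S_{f}$ is closed-valued. This dispatches point~\ref{lsc_norm_int}.

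The core of the argument is to show that $f$ is $\trib\otimes\borel{\ban}$-measurable, and the step I expect to be the main obstacle is precisely that $\messpace$ is \emph{not} assumed separable. For $\alpha\in\bbR$ the sublevel set $A_{\alpha}=\ba{(u,w)\in\ban\times\messpace,\ h(u,w)\leq\alpha}$ is closed by joint lower semicontinuity of $h$. The key observation is that the separability, hence second countability, of $\ban$ alone forces every open — and therefore every closed — subset of $\ban\times\messpace$ to lie in the product $\sigma$-field $\borel{\ban}\otimes\borel{\messpace}$: fixing a countable base $\na{U_{i}}_{i\in\bbN}$ of $\ban$, any open $O\subseteq\ban\times\messpace$ can be written $O=\bigcup_{i}U_{i}\times V_{i}$ with $V_{i}$ open in $\messpace$, whence $O\in\borel{\ban}\otimes\borel{\messpace}$ (this refines Proposition~\ref{prop:borel_prod} to the situation where only one factor is separable). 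Consequently $A_{\alpha}\in\borel{\ban}\otimes\borel{\messpace}$. Since $\Psi:(\omega,u)\mapsto\bp{u,\va{W}(\omega)}$ is $\bp{\trib\otimes\borel{\ban},\borel{\ban}\otimes\borel{\messpace}}$-measurable, its two coordinates being the measurable projection $u$ and the measurable map $\va{W}$, we obtain $\ba{(\omega,u),\ f(\omega,u)\leq\alpha}=\Psi^{-1}(A_{\alpha})\in\trib\otimes\borel{\ban}$ for every $\alpha$, so $f$ is $\trib\otimes\borel{\ban}$-measurable.

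Finally I would upgrade this joint measurability to Effros-measurability of $S_{f}$ using the completeness of $\trib$ and the projection theorem. As $\ban$ and $\bbR$ are separable, Proposition~\ref{prop:borel_prod} gives $\borel{\ban\times\bbR}=\borel{\ban}\otimes\borel{\bbR}$, so joint measurability yields $\gph S_{f}=\epi f=\ba{(\omega,u,\alpha),\ f(\omega,u)\leq\alpha}\in\trib\otimes\borel{\ban\times\bbR}$. For any open $O\subseteq\ban\times\bbR$ one has $S_{f}^{-}(O)=\proj{\omeg}{\epi f\cap(\omeg\times O)}$, and $\epi f\cap(\omeg\times O)$ still lies in $\trib\otimes\borel{\ban\times\bbR}$; Sainte-Beuve's projection theorem (Proposition~\ref{prop:proj_thm}, applied with the separable Banach space $\ban\times\bbR$) then gives $S_{f}^{-}(O)\in\hat{\trib}$. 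Since $\prbt$ is $\sigma$-finite, $\hat{\trib}\subseteq\trib_{\prbt}=\trib$ by completeness, so $S_{f}^{-}(O)\in\trib$. This verifies point~\ref{epi_mes}, and combined with the lower semicontinuity of the slices it shows that $f$ is a normal integrand. (Alternatively, one could conclude from the equivalence under completeness of the Hess and Castaing definitions of normal integrand, but the projection route keeps the argument self-contained in terms of Proposition~\ref{prop:proj_thm}.)
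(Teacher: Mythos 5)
Your proof is correct, but it takes a genuinely different route from the paper's. The paper never establishes joint measurability of $f$ on the way to Effros-measurability; it works entirely with set-valued mappings: it builds an explicit Castaing representation of $\Gamma:\omega\mapsto\gph G(\omega,\cdot,\cdot)$ for $G(\omega,u,\alpha)=(u,\va{W}(\omega),\alpha)$ from a countable dense subset of $\ban\times\bbR$, intersects its graph with the closed set $(\ban\times\bbR)\times\epi h$ using Proposition~\ref{prop:mes_closed} (this is where completeness of $\trib$ enters), and recovers $S_{f}$ as the image of the resulting multifunction under a continuous projection. You instead prove directly that $f$ is $\trib\otimes\borel{\ban}$-measurable and then pass from joint measurability to Effros-measurability of $S_{f}$ via Sainte-Beuve's projection theorem (Proposition~\ref{prop:proj_thm}) together with $\hat{\trib}\subseteq\trib_{\prbt}=\trib$ --- in effect you prove the implication ``Castaing normal integrand $\Rightarrow$ Hess normal integrand under completeness'' that the paper only records as a remark after Definition~\ref{def:norm_int}. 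Both arguments use completeness essentially (the paper's Proposition~\ref{prop:mes_closed} is itself proved via the projection theorem in infinite dimensions), so neither is logically weaker. The delicate point in your route, which you correctly identify and justify, is that $\messpace$ is not assumed separable, so $\borel{\ban\times\messpace}=\borel{\ban}\otimes\borel{\messpace}$ does not follow from Proposition~\ref{prop:borel_prod}; your observation that second countability of $\ban$ alone suffices to write every open subset of the product as a countable union of open rectangles is correct and is exactly what makes the sublevel sets $A_{\alpha}$ product-measurable. What your approach buys is the joint measurability of $f$ as an immediate by-product (the paper obtains it separately from Proposition~\ref{prop:joint_mes}); what the paper's approach buys is that it never has to compare the Borel $\sigma$-field of $\ban\times\messpace$ with the product $\sigma$-field, since its Castaing representation lives only on $\ban\times\bbR$. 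One cosmetic caveat: the inclusion $\hat{\trib}\subseteq\trib_{\prbt}$ requires $\prbt$ to be $\sigma$-finite, which holds here because $\prbt$ is a probability measure, exactly as invoked in the paper's Corollary~\ref{cor:mes_argmin}.
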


\begin{proof}
  We have that $h$ is \lsc\ so $f(\omega, \cdot) = h(\cdot, \va{W}(\omega))$ is
  \lsc\ for all $\omega \in \omeg$. It remains to prove that the epigraphical
  mapping $S_{f}$ is Effros-measurable.  As $h$ is \lsc, the set $\epi h$ is
  closed.  Define:
  \begin{align*}
    G: (\omega, u, \alpha) \in \omeg \times \ban \times \bbR \mapsto (u, \va{W}(\omega), \alpha) \in \ban \times \messpace \times \bbR \eqfinp
  \end{align*}
  Then, let:
  \begin{align*}
    Q(\omega) &= \bc{(\ban \times \bbR) \times \epi h} \cap \gph G(\omega, \cdot, \cdot) \eqfinv \\
    &= \ba{\vardelim{(u, \alpha), (u, \va{W}(\omega), \alpha)} \ \text{such that} \ h(u, \va{W}(\omega)) \leq \alpha, \ (u, \alpha) \in \espacea{U} \times \bbR} \eqfinv\\
    &= \ba{\vardelim{(u, \alpha), (u, \va{W}(\omega), \alpha)} \ \text{such that} \ f(\omega, u) \leq \alpha, \ (u, \alpha) \in \espacea{U} \times \bbR} \eqfinp
  \end{align*}
  Now, define the projection operator $P$ as:
  \begin{align*}
    P : (\ban \times \bbR) \times (\ban \times \messpace \times \bbR) &\rightarrow (\ban \times \bbR) \eqfinv\\
    ((u, \alpha), (v, w, \beta)) & \mapsto (u, \alpha)
  \end{align*}
  so that we have:
  \begin{align*}
    S_{f}(\omega) = \ba{(u, \alpha) \in \espacea{U} \times \bbR, \ f(\omega, u) \leq \alpha} = P\bp{Q(\omega)}\eqfinp
  \end{align*}
  \begin{itemize}
  \item Let $\Gamma$ be the set valued mapping defined by
    $\Gamma: \omega \in \omeg \mapsto \gph G(\omega, \cdot, \cdot) \in (\ban
    \times \bbR) \times (\ban \times \messpace \times \bbR)$. We show that
    $\Gamma$ is Effros-measurable. As $\ban$ is separable, there exists a
    countable dense subset $\na{(b_{n}, r_{n}), \ n \in \bbN}$ of
    $\ban \times \bbR$. For $n \in \bbN$, let
    $\gamma_{n}(\omega) = \bp{(b_{n}, r_{n}), G(\omega, b_{n}, r_{n})}$. As
    $G(\omega, b_{n}, r_{n}) = (b_{n}, \va{W}(\omega), r_{n})$ and $\va{W}$ is
    measurable, we get that $\gamma_{n}$ is measurable. Then, we have
    $\Gamma(\omega) = \cl\na{\gamma_{n}(\omega), \ n \in \bbN}$. Hence,
    $\na{\gamma_{n}}_{n \in \bbN}$ is a Castaing representation of
    $\Gamma$. Moreover, $\Gamma$ is closed-valued and non-empty valued so by
    Proposition~\ref{prop:cast_rep}, we deduce that $\Gamma$ is
    Effros-measurable.
  \item Let $C \subset (\ban \times \bbR) \times (\ban \times \messpace \times \bbR)$ be a closed set. We have:
  \begin{align*}
    Q^{-}(C) &= \ba{\omega \in \omeg, \bc{(\ban \times \bbR) \times \epi h} \cap \Gamma(\omega) \cap C \neq \emptyset} \eqfinv\\
    &= \Gamma^{-}\vardelim{C \cap \bc{(\ban \times \bbR) \times \epi h}} \eqfinp
  \end{align*}
  As $\epi h$ is closed, the set $C \cap \bc{(\ban \times \bbR) \times \epi h}$
  is closed. By assumption, the $\sigma$-field $\trib$ is complete and we have
  shown that $\Gamma$ is Effros-measurable, therefore by
  Proposition~\ref{prop:mes_closed}, we get that
  $\Gamma^{-}\vardelim{C \cap \bc{(\ban \times \bbR) \times \epi h}} = Q^{-}(C)
  \in \trib$. Hence, $Q$ is Effros-measurable.
  \item Finally, for every open set $V \subset \ban \times \bbR$, as $S_{f}(\omega) = P\bp{Q(\omega)}$, we have:
    \begin{align*}
      S_{f}^{-}(V) = \ba{\omega \in \omeg, \ Q(\omega) \cap P^{-1}(V) \neq \emptyset}\eqfinp
    \end{align*}
    The projection $P$ is continuous so $P^{-1}(V)$ is open. As $Q$ is
    Effros-measurable, we get that $S_{f}^{-}(V) \in \trib$, that is, $S_{f}$ is
    Effros-measurable.
  \end{itemize}
  This completes the proof.
\end{proof}

We now give the main results that are used to prove the measurability of the
iterates of the stochastic APP. The following proposition is a slight extension
of~\cite[Proposition 4.2(c)]{hess_epi-convergence_1996}.

\begin{proposition}
  Let $(\omeg, \trib)$ be a measurable space and $(\ban, \borel{\ban})$ be a
  separable Banach space equipped with its Borel $\sigma$-field. Let $\Uad$ be a
  closed subset of $\ban$. Let
  $f: \omeg \times \ban \rightarrow \bbR \cup \{+\infty\}$ be an
  $\trib \otimes \borel{\ban}$-measurable function. Let
  $M: \omeg \rightrightarrows \ban$ be the argmin set-valued mapping:
  \begin{align*}
    M(\omega) = \argmin_{u \in \Uad} f(\omega, u)\eqfinp
  \end{align*}
  Assume that the argmin mapping $M$ is non-empty valued, then $M$ admits an $\hat{\trib}$-measurable selection.
  \label{prop:mes_argmin}
\end{proposition}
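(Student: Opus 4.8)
The plan is to realize the argmin mapping $M$ as a section of a measurable set and then invoke the Sainte-Beuve projection theorem (Proposition~\ref{prop:proj_thm}) to obtain an $\hat{\trib}$-measurable selection. The key idea is that $u \in M(\omega)$ iff $f(\omega,u) = \inf_{v \in \Uad} f(\omega,v)$, so I first need to understand the measurability of the value function $m(\omega) := \inf_{v \in \Uad} f(\omega,v)$. The natural route is to show that the graph of $M$, namely $\gph M = \ba{(\omega, u) \in \omeg \times \Uad, \ f(\omega,u) \leq m(\omega)}$, belongs to the product $\sigma$-field $\trib \otimes \borel{\ban}$, and then to extract a measurable selection whose graph lies in this set.

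First I would establish that the value function $m$ is $\trib$-measurable. Since $\ban$ is separable, fix a countable dense subset $\na{u_n}_{n \in \bbN}$ of $\Uad$ (taking the closure of $\Uad$ into account, using that $\Uad$ is closed hence separable). Because $f(\omega,\cdot)$ is \lsc\ and $\Uad$ is closed, one expects $\inf_{v \in \Uad} f(\omega,v) = \inf_{n \in \bbN} f(\omega, u_n)$: lower-semicontinuity lets us approximate the infimum over the closed set $\Uad$ by values on a dense sequence. Each $\omega \mapsto f(\omega, u_n)$ is $\trib$-measurable because $f$ is $\trib \otimes \borel{\ban}$-measurable (slices of jointly measurable functions are measurable), so $m = \inf_n f(\cdot, u_n)$ is $\trib$-measurable as a countable infimum. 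This is the step I expect to require the most care: the interchange of the infimum over $\Uad$ with the infimum over the dense sequence must be justified using \lsc ness, and one must handle the possibility that the infimum is $-\infty$ or that $f$ takes the value $+\infty$.

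Next I would verify that $\gph M \in \trib \otimes \borel{\ban}$. Consider the map $(\omega,u) \mapsto f(\omega,u) - m(\omega)$, which is $\trib \otimes \borel{\ban}$-measurable since $f$ is jointly measurable and $m$ is $\trib$-measurable (hence $m \circ \projop{\omeg}$ is jointly measurable). Intersecting with the closed, hence $\borel{\ban}$-measurable, constraint $\omeg \times \Uad$, we obtain
\begin{align*}
  \gph M = \ba{(\omega,u) \in \omeg \times \ban, \ u \in \Uad \text{ and } f(\omega,u) - m(\omega) \leq 0} \in \trib \otimes \borel{\ban}\eqfinp
\end{align*}
Here I am implicitly using that $M(\omega)$ is non-empty, so that the inequality $f(\omega,u) \leq m(\omega)$ is actually attained and correctly characterizes the argmin set.

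Finally, with $G := \gph M \in \trib \otimes \borel{\ban}$, I would apply the Sainte-Beuve projection theorem (Proposition~\ref{prop:proj_thm}) together with its companion measurable-selection consequence: the projection $\proj{\omeg}{G}$ is $\hat{\trib}$-measurable, and since $M$ is non-empty valued this projection is all of $\omeg$. The von Neumann--Aumann selection mechanism underlying Sainte-Beuve's theorem then yields a selection $\widetilde{\va{U}} : \omeg \rightarrow \ban$ that is $\hat{\trib}$-measurable and satisfies $\widetilde{\va{U}}(\omega) \in M(\omega)$ for all $\omega \in \omeg$. The passage from a measurable graph to a universally measurable selection is exactly what the projection theorem supplies in the infinite-dimensional separable Banach setting, which is why the conclusion is stated for $\hat{\trib}$ rather than $\trib$.
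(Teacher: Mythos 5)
There is a genuine gap at the very first step, and it concerns the crux of the whole proposition. You claim that the value function $m(\omega)=\inf_{v\in\Uad}f(\omega,v)$ is $\trib$-measurable because it equals $\inf_{n}f(\omega,u_{n})$ over a countable dense subset $\na{u_{n}}_{n\in\bbN}$ of $\Uad$, ``using lower semicontinuity''. Two problems. First, the proposition does not assume that $f(\omega,\cdot)$ is \lsc; the only hypothesis is joint $\trib\otimes\borel{\ban}$-measurability, so you are importing a hypothesis that is not there. Second, and more seriously, even for an \lsc\ function the interchange of infima is false: lower semicontinuity prevents the value from jumping \emph{down} at a limit point, but it does not let you approximate $f(\omega,v)$ from above by values at nearby points of the dense sequence (take $f(\omega,\cdot)$ equal to $0$ at a single point of $\Uad$ and $1$ elsewhere; this is \lsc, its infimum over $\Uad$ is $0$, yet the infimum over any countable dense set missing that point is $1$). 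The dense-sequence trick works for upper semicontinuous or continuous integrands, not here. In this generality the only available route is the one the paper takes: write $\na{\omega\in\omeg,\ m(\omega)<\alpha}=\proj{\omeg}{H}$ with $H=(\omeg\times\Uad)\cap\na{(\omega,u),\ f(\omega,u)<\alpha}\in\trib\otimes\borel{\ban}$ and invoke Sainte-Beuve's projection theorem (Proposition~\ref{prop:proj_thm}). This only delivers $\hat{\trib}$-measurability of $m$, not $\trib$-measurability --- and that is precisely why the conclusion of the proposition is stated for $\hat{\trib}$. Consequently your claim that $\gph M\in\trib\otimes\borel{\ban}$ must be weakened to $\gph M\in\hat{\trib}\otimes\borel{\ban}$.

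The final step is also not justified with the tools the paper has set up. Proposition~\ref{prop:proj_thm} as stated is only a projection theorem; it does not come with a ``companion measurable-selection consequence'', and appealing to an unstated von Neumann--Aumann selection mechanism is an assertion, not a proof. The paper instead closes the argument by showing that $M$ is Effros-measurable for $\hat{\trib}$ --- for every open $O\subset\ban$ one has $M^{-}(O)=\proj{\omeg}{(\omeg\times O)\cap\gph M}\in\hat{\hat{\trib}}=\hat{\trib}$ by a second application of the projection theorem --- and then extracting a selection from a Castaing representation (Proposition~\ref{prop:cast_rep}). You should repair the measurability of $m$ as above and replace your last paragraph by this Effros-measurability plus Castaing-representation argument.
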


\begin{proof}
  Let $\alpha \in \bbR$ and $m(\omega) = \min_{u \in \Uad} f(\omega, u)$. The function $m$ is well-defined as $M$ is non-empty valued. Let:
  \begin{align*}
    H = (\omeg \times \Uad) \cap \left\{(\omega, u) \in \omeg \times \ban, \ f(\omega, u) < \alpha\right\} \eqfinp
  \end{align*}
  We have:
  \begin{align*}
    \left\{\omega \in \omeg, \ m(\omega) < \alpha\right\} = \proj{\omeg}{H} \eqfinv
  \end{align*}
  where $\proj{\omeg}{H}$ is the projection of $H$ on $\omeg$. As $f$ is
  $\trib \otimes \borel{\ban}$-measurable and $\Uad$ is closed hence measurable,
  we get that $H \in \trib \otimes \borel{\ban}$. From
  Proposition~\ref{prop:proj_thm}, we deduce that
  $m^{-1}\vardelim{]-\infty, \alpha[}$ is $\hat{\trib}$-measurable so that $m$
  is $\hat{\trib}$-measurable. As $\trib \subset \hat{\trib}$, {we have
    that
    $\mathcal{A} \otimes \mathcal{B}(\mathbb{U}) \subset \hat{\mathcal{A}}
    \otimes \mathcal{B}(\mathbb{U})$, therefore} the function $f$ is
  $\hat{\trib} \otimes \borel{\ban}$-measurable. We can write:
  \begin{align*}
    M(\omega) = \left\{u \in \Uad, \ f(\omega, u) = m(\omega)\right\} \eqfinv
  \end{align*}
  so,
  $\gph M = \left\{(\omega, u) \in \omeg \times \Uad, f(\omega, u) =
    m(\omega)\right\}$. Therefore, $\gph M$ is
  $\hat{\trib} \otimes \borel{\ban}$-measurable as the inverse image of $\na{0}$
  under the $\hat{\trib} \otimes \borel{\ban}$-measurable mapping
  $(\omega, u) \mapsto f(\omega, u) - m(\omega)$. Let $O$ be an open subset of
  $\ban$. We have:
  \begin{align*}
    M^{-}(O) = \proj{\omeg}{(\omeg \times O) \cap \gph M} \eqfinp
  \end{align*}
  As $(\omeg \times O) \cap \gph M \in \hat{\trib} \otimes \borel{\ban}$, by
  Proposition~\ref{prop:proj_thm}, we get that
  $M^{-}(O) \in \hat{\hat{\trib}} = \hat{\trib}$. Hence, $M$ is
  Effros-measurable for the $\sigma$-field $\hat{\trib}$ and is non-empty-valued
  by assumption, so by Proposition~\ref{prop:cast_rep}, $M$ admits an
  $\hat{\trib}$-measurable selection.
\end{proof}

\begin{corollary}
  Let $(\omeg, \trib, \mu)$ be a complete probability space, i.e. $\trib = \trib_{\mu}$. Let $(\ban, \borel{\ban})$ be a separable Banach space equipped with its Borel $\sigma$-field. Let $f: \omeg \times \ban \rightarrow \bbR \cup \{+\infty\}$ be an $\trib \otimes \borel{\ban}$-measurable function. Suppose that the argmin mapping $M$ is non-empty valued. Then, $M$ admits an $\trib$-measurable selection.
  \label{cor:mes_argmin}
\end{corollary}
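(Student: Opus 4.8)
The plan is to obtain the result as an immediate consequence of Proposition~\ref{prop:mes_argmin}, the only additional ingredient being that the completeness hypothesis collapses the universal $\sigma$-field $\hat{\trib}$ onto $\trib$ itself. First I would apply Proposition~\ref{prop:mes_argmin} verbatim: its hypotheses (separable Banach space $\ban$, an $\trib \otimes \borel{\ban}$-measurable $f$, and a non-empty-valued argmin mapping $M$) are exactly those assumed in the corollary, so $M$ admits an $\hat{\trib}$-measurable selection, say $\gamma$.

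It then remains to upgrade $\gamma$ from $\hat{\trib}$-measurability to $\trib$-measurability, and the key observation is that under the standing assumption $\trib = \trib_{\prbt}$ one has $\hat{\trib} = \trib$. Indeed, by the definition of the completion, every measure $\mu$ satisfies $\trib \subset \trib_{\mu}$; taking the intersection over all positive $\sigma$-finite $\mu$ yields $\trib \subset \hat{\trib}$. For the reverse inclusion, note that the probability measure $\prbt$ is itself positive and $\sigma$-finite, hence it is one of the measures over which the intersection defining $\hat{\trib}$ is taken; therefore $\hat{\trib} = \bigcap_{\mu} \trib_{\mu} \subset \trib_{\prbt} = \trib$, the last equality being the completeness hypothesis. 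Combining the two inclusions gives $\hat{\trib} = \trib$.

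Consequently the selection $\gamma$ furnished by Proposition~\ref{prop:mes_argmin}, being $\hat{\trib}$-measurable, is in fact $\trib$-measurable, which is precisely the claim. I do not anticipate any genuine obstacle here: the argument is a purely formal reduction, and the single substantive point is the chain of inclusions $\trib \subset \hat{\trib} = \bigcap_{\mu}\trib_{\mu} \subset \trib_{\prbt} = \trib$. The only thing to be careful about is that $\hat{\trib}$ is defined as an intersection over $\sigma$-finite measures and that a probability measure qualifies, so that $\prbt$ may legitimately be singled out inside that intersection.
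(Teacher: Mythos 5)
Your proposal is correct and follows essentially the same route as the paper: apply Proposition~\ref{prop:mes_argmin} to obtain an $\hat{\trib}$-measurable selection, then use the inclusion $\hat{\trib} = \bigcap_{\mu}\trib_{\mu} \subset \trib_{\prbt} = \trib$ (since $\prbt$ is a positive $\sigma$-finite measure) to conclude that this selection is $\trib$-measurable. The only difference is that you additionally verify the reverse inclusion $\trib \subset \hat{\trib}$, which is harmless but not needed for the conclusion.
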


\begin{proof}
  As $\mu$ is a positive $\sigma$-finite measure, we have
  $\hat{\trib} = \bigcap_{\mu} \trib_{\mu} \subset \trib_{\mu} = \trib$. By
  Proposition~\ref{prop:mes_argmin}, $M$ admits an $\hat{\trib}$-measurable
  selection, which is also an $\trib$-measurable selection.
\end{proof}

\begin{proposition}\cite[Theorem 4.6]{hess_measurability_1995}
  Let $(\omeg, \trib)$ be a measurable space and $\ban$ be a separable Banach
  space with separable topological dual $\ban\dual$. Let
  $f: \omeg \times \ban \rightarrow \bbR \cup \{+\infty\}$ be a {proper}
  normal integrand {and} $\va{U} : \omeg \rightarrow \ban$ be a
  measurable mapping. Then, the set-valued mapping
  $D_{\va{U}}: \omeg \rightrightarrows \ban\dual$ {defined by}
  \begin{align*}
    D_{\va{U}}(\omega) &= \partial_{u} f\bp{\omega, \va{U}(\omega)} \\
    &= \ba{v \in \ban\dual, \ f(\omega, u) \geq f(\omega, \va{U}(\omega)) + \proscal{v}{u - \va{U}(\omega)}, \ \forall u \in \ban}\eqfinv
  \end{align*}
  is Effros-measurable.
  \label{prop:mes_diff}
\end{proposition}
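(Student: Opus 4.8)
The plan is to characterise the subdifferential through the Fenchel conjugate and the Young equality, so that $D_{\va{U}}$ appears as the zero-level set of a normal integrand whose measurability can be read off from an epigraphical Castaing representation. For $\omega \in \omeg$ and $v \in \ban\dual$, set $f^{*}(\omega, v) = \sup_{u \in \ban}\bp{\proscal{v}{u} - f(\omega, u)}$, the Fenchel conjugate of $f(\omega, \cdot)$. Since $f(\omega, \cdot)$ is proper, convex and \lsc, Young's inequality $f(\omega, u) + f^{*}(\omega, v) \geq \proscal{v}{u}$ holds for all $u, v$, with equality \IFF $v \in \partial_{u} f(\omega, u)$. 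Evaluating at $u = \va{U}(\omega)$ yields the key identity
\begin{align*}
  D_{\va{U}}(\omega) = \ba{v \in \ban\dual, \ f^{*}(\omega, v) - \proscal{v}{\va{U}(\omega)} + f\bp{\omega, \va{U}(\omega)} \leq 0}\eqfinv
\end{align*}
the reverse inequality being automatic by Young. Writing $g(\omega, v)$ for the left-hand side of this inequality, we have $g \geq 0$ and $D_{\va{U}}(\omega) = \ba{v, \ g(\omega, v) = 0}$ is exactly the minimiser set of $g(\omega, \cdot)$.

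First I would show that $f^{*}$ is a normal integrand on $\omeg \times \ban\dual$ (note $\ban\dual$ is separable by assumption). As $f$ is a normal integrand, its epigraphical mapping $S_{f}$ is Effros-measurable and closed-valued in the separable space $\ban \times \bbR$, so Proposition~\ref{prop:cast_rep} furnishes a Castaing representation $S_{f}(\omega) = \cl\ba{(u_{n}(\omega), \alpha_{n}(\omega)), \ n \in \bbN}$ with $u_{n}, \alpha_{n}$ measurable. Since $\alpha_{n}(\omega) \geq f\bp{\omega, u_{n}(\omega)}$ and $(u, \alpha) \mapsto \proscal{v}{u} - \alpha$ is continuous, rewriting the conjugate as a supremum over the epigraph and then over its dense countable part gives $f^{*}(\omega, v) = \sup_{n \in \bbN}\bp{\proscal{v}{u_{n}(\omega)} - \alpha_{n}(\omega)}$. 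Each term is measurable in $\omega$ and affine, hence continuous, in $v$, i.e. a Carathéodory and thus normal integrand; a countable supremum of normal integrands is again a normal integrand (the epigraphical mappings intersect), so $f^{*}$ is a normal integrand. This representation is what makes the argument work without any completeness assumption on $\trib$: the conjugate is intrinsically a countable supremum of continuous functions.

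Next I would assemble $g$. The map $(\omega, v) \mapsto -\proscal{v}{\va{U}(\omega)}$ is measurable in $\omega$ (as $\va{U}$ is measurable and the duality pairing is jointly continuous) and continuous in $v$, hence Carathéodory, while $(\omega, v) \mapsto f\bp{\omega, \va{U}(\omega)}$ is measurable in $\omega$ (by the joint $\trib \otimes \borel{\ban}$-measurability of $f$ from Proposition~\ref{prop:joint_mes}, composed with $\omega \mapsto (\omega, \va{U}(\omega))$) and constant in $v$. Adding these to the normal integrand $f^{*}$ keeps $g$ a normal integrand. In particular $g$ is jointly measurable and $g(\omega, \cdot)$ is \lsc, so $D_{\va{U}}(\omega) = \ba{v, \ g(\omega, v) \leq 0}$ is closed, and it is non-empty because $f(\omega, \cdot)$ is subdifferentiable at $\va{U}(\omega)$.

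It remains to deduce the Effros-measurability of the zero-level-set mapping $D_{\va{U}}$, and this is the step I expect to be the crux. The naive route --- reducing the condition $v \in \partial_{u} f(\omega, \va{U}(\omega))$ to the countably many inequalities $\proscal{v}{u_{n} - \va{U}(\omega)} \leq f(\omega, u_{n}) - f(\omega, \va{U}(\omega))$ over a dense family $\ba{u_{n}}$ --- fails in infinite dimensions, since a convex \lsc function need not be continuous and its domain may have empty interior, so one cannot pass to the limit in the subgradient inequality; this is precisely where the finite-dimensional proof uses local compactness. Routing through $g$ repairs this: from the Castaing representation of the epigraphical mapping $S_{g}$ (Proposition~\ref{prop:cast_rep}, now using separability of $\ban\dual$) one extracts the slice at height $0$ to build measurable functions whose closure is $D_{\va{U}}(\omega)$, equivalently one checks $D_{\va{U}}^{-}(O) \in \trib$ for open $O$ by the argument of Proposition~\ref{prop:mes_argmin}. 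The conjugate representation above is what keeps everything in $\trib$ rather than passing to its completion $\hat{\trib}$, matching the general measurable space of the statement. By Proposition~\ref{prop:cast_rep}, producing such a Castaing representation is equivalent to the Effros-measurability of $D_{\va{U}}$, which is the claim.
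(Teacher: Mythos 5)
The paper does not actually prove this proposition: it is quoted verbatim from Hess \cite[Theorem 4.6]{hess_measurability_1995} and used as a black box, so there is no in-paper proof to compare against. That said, your strategy --- reduce $\partial_{u} f\bp{\omega, \va{U}(\omega)}$ via the Young--Fenchel equality to the zero-sublevel set of $g(\omega,v) = f^{*}(\omega,v) - \proscal{v}{\va{U}(\omega)} + f\bp{\omega,\va{U}(\omega)}$, and obtain the measurability of $f^{*}$ from a Castaing representation of the epigraph of $f(\omega,\cdot)$ --- is essentially the route Hess himself takes, and the identity $D_{\va{U}}(\omega) = \ba{v,\ g(\omega,v) \leq 0}$ together with the countable-supremum formula for $f^{*}$ are correct (modulo the implicit requirement that $f\bp{\omega,\va{U}(\omega)}$ be finite, which subdifferentiability at $\va{U}(\omega)$ guarantees).

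There are, however, two genuine gaps. First, the parenthetical claim that a countable supremum of normal integrands is a normal integrand "because the epigraphical mappings intersect" is not free: in a general (non-complete) measurable space, a countable intersection of Effros-measurable closed-valued multifunctions need not be Effros-measurable --- this is exactly the phenomenon behind the completeness hypothesis in Proposition~\ref{prop:mes_closed} and the remark following it. One must exploit the specific form $S_{f^{*}}(\omega) = \bigcap_{n}\ba{(v,\beta),\ \proscal{v}{u_{n}(\omega)} - \alpha_{n}(\omega) \leq \beta}$, an intersection of half-spaces with measurable coefficients; making that work without completing $\trib$ is one of the actual technical contributions of Hess's paper, not a one-line remark.

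Second, and more seriously, the final step --- passing from "$g$ is a nonnegative normal integrand" to the Effros-measurability of its zero-level-set multifunction --- is the crux, as you say, but neither of the two routes you sketch goes through. Extracting "the slice at height $0$" from a Castaing representation of $S_{g}$ does not yield a Castaing representation of $\ba{v,\ g(\omega,v) \leq 0}$: a countable dense subset of the epigraph need not contain, nor accumulate on, points at a prescribed height, so level sets cannot be recovered this way. And "the argument of Proposition~\ref{prop:mes_argmin}" runs through Sainte-Beuve's projection theorem and therefore only delivers $\hat{\trib}$-measurability, which contradicts your own (correct) insistence that the conclusion must hold for $\trib$ itself in a bare measurable space. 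Knowing that $\min_{v} g(\omega,v) = 0$ identically does spare you the projection theorem for the value function, but converting graph-measurability of $\ba{(\omega,v),\ g(\omega,v)=0}$ into Effros-measurability of the multifunction still requires either a projection (hence completion) or an explicit construction of measurable selections dense in $\partial_{u} f\bp{\omega,\va{U}(\omega)}$; that construction is the real content of Hess's theorem and is missing from your argument.
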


\subsubsection{Existence of a measurable selection for the \texorpdfstring{$\mathrm{argmin}$}{argmin} mapping of \texorpdfstring{$\Phi$}{Phi}}
\label{subsubsec:mes_selec_argmin}

In this section, we make use of the tools introduced in~\S \ref{subsubsec:tools}
to prove our main measurability result. We introduce the argmin set-valued
mapping $M : \omeg \rightrightarrows \ban$ for Problem~\eqref{eq:pb_interest}:
\begin{align}
  M(\omega) = \argmin_{u \in \Uad} \left\{\Phi(\omega, u) \defegal K(u) + \proscal{\va{\varphi}(\omega)}{u} + \varepsilon \jad\bp{u, \va{W}(\omega)} \right\}\eqfinp
  \label{eq:aux_pb_indep}
\end{align}
We consider the following assumptions:
\begin{enumerate}[label=(A\arabic*)]
  \item The space $\ban$ is a reflexive, separable Banach space. {This implies in particular that $\ban\dual$ is separable.}\label{hyp:sep_banach}
  \item $\Uad$ is a non-empty closed convex subset of $\ban$.\label{hyp:uad}
  \item $\jad: \ban \times \messpace \rightarrow \bbR$ is jointly \lsc\ and for all $w \in \messpace$, $\jad(\cdot, w)$ is proper and convex. \label{hyp:jad}
  \item The function $K: \ban \rightarrow \bbR$ is proper, convex, \lsc\ and Gateaux-differentiable on an open set containing $\Uad$. \label{hyp:k}
  \item For all $\omega \in \omeg$, the function $u \mapsto \Phi(\omega, u)$ is coercive on $\Uad$ meaning that when $\norm{u} \rightarrow +\infty$ with $u \in \Uad$, we have $\Phi(\omega, u) \rightarrow +\infty$. This assumption is automatically satisfied if $\Uad$ is bounded. \label{hyp:phi_coer}
  \item The $\sigma$-field $\trib$ is complete for the measure $\prbt$, that is, $\trib = \trib_{\prbt}$. \label{hyp:complete}
  \item The function $\va{W} : \omeg \rightarrow \messpace$ is measurable. \label{hyp:mes_w}
  \item The function $\va{\varphi}: \omeg \rightarrow \ban\dual$ is measurable. \label{hyp:mes_phi}
\end{enumerate}
The objective of this part is to prove that $M$ defined in
Equation~\eqref{eq:aux_pb_indep} admits a measurable selection.  We start by a
classical theorem from optimization theory giving conditions for the existence
and {uniqueness} of a minimizer $\Phi(\omega, \cdot)$.

\begin{theorem}
  Let $\omega \in \omeg$. Under
  Assumptions~\textup{\ref{hyp:sep_banach}-\ref{hyp:phi_coer}}, $M(\omega)$ is
  non-empty, closed and convex. Moreover, if $K$ is strongly convex, then
  $M(\omega)$ is a singleton, meaning that $\Phi(\omega, \cdot)$, defined
  in~\eqref{eq:aux_pb_indep}, has a unique minimizer.
  \label{thm:argmin_non_empty}
\end{theorem}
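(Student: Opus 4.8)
The plan is to prove existence by the direct method of the calculus of variations, and then to read off closedness and convexity from the structure of the objective, handling the strongly convex case separately for uniqueness. Throughout, $\omega$ is fixed, so everything is a statement about the single function $\Phi(\omega, \cdot) : \ban \to \bbR$.

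First I would record that $\Phi(\omega, \cdot)$ is convex and \lsc. Convexity holds because $K$ is convex by~\ref{hyp:k}, the map $u \mapsto \proscal{\va{\varphi}(\omega)}{u}$ is linear, and $u \mapsto \jad(u, \va{W}(\omega))$ is convex by~\ref{hyp:jad} with $\varepsilon > 0$. Lower semicontinuity holds because $K$ is \lsc{} by~\ref{hyp:k}, the linear functional is continuous, and $\jad(\cdot, \va{W}(\omega))$ is \lsc{} as a section of the jointly \lsc{} function $\jad$; a finite sum of real-valued \lsc{} functions is \lsc. Since a convex \lsc{} function has a strongly closed convex epigraph, hence a weakly closed one, it is weakly sequentially \lsc; I will use this together with the fact that, by~\ref{hyp:uad} and Mazur's lemma, the closed convex set $\Uad$ is weakly closed. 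Note that $\Phi(\omega, \cdot)$ is finite-valued, so $m := \inf_{u \in \Uad} \Phi(\omega, u) < +\infty$ since $\Uad \neq \emptyset$.

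For non-emptiness of $M(\omega)$ I would run the direct method. Pick a minimizing sequence $\{u_n\} \subset \Uad$. By the coercivity assumption~\ref{hyp:phi_coer}, $\{u_n\}$ is bounded, since an unbounded subsequence would force $\Phi(\omega, u_n) \to +\infty$. Because $\ban$ is reflexive by~\ref{hyp:sep_banach}, bounded sequences admit weakly convergent subsequences (Eberlein--\v{S}mulian), so after extraction $u_{n_k} \rightharpoonup \bar u$; weak closedness of $\Uad$ gives $\bar u \in \Uad$, and weak sequential lower semicontinuity gives $\Phi(\omega, \bar u) \leq \liminf_k \Phi(\omega, u_{n_k}) = m$. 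The same argument applied to any sequence along which $\Phi(\omega, \cdot)$ decreases shows $m > -\infty$: otherwise the weak limit would force $\Phi(\omega, \bar u) = -\infty$, contradicting that $\Phi(\omega, \cdot)$ is real-valued. Hence $\bar u$ attains the infimum and $M(\omega) \neq \emptyset$. Closedness and convexity are then structural: writing $M(\omega) = \Uad \cap \{u \in \ban : \Phi(\omega, u) \leq m\}$ exhibits it as the intersection of the closed set $\Uad$ with a sublevel set of the \lsc{} function $\Phi(\omega, \cdot)$, hence closed, while convexity follows because any convex combination of two minimizers again lies in $\Uad$ and, by convexity of $\Phi(\omega, \cdot)$, again realizes the value $m$. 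If $K$ is strongly convex then so is $\Phi(\omega, \cdot)$, the two remaining terms being convex, and strong convexity produces a strict inequality at the midpoint of two distinct minimizers, contradicting minimality; thus $M(\omega)$ is a singleton.

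The main obstacle is the existence step, and specifically the single non-elementary input: the interplay of reflexivity (weak sequential compactness of bounded sets) with weak sequential lower semicontinuity of convex \lsc{} functions and the weak closedness of $\Uad$. Everything else is routine once existence is secured. I would also stress that Assumptions~\ref{hyp:complete}--\ref{hyp:mes_phi} play no role here, being reserved for the measurable-selection argument layered on top of this pointwise statement.
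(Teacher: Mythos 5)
Your proof is correct and follows essentially the same route as the paper: the paper simply cites \cite[Corollary III.20]{brezis_analyse_2005} for existence (convex, \lsc, coercive functional on a closed convex subset of a reflexive Banach space attains its minimum), whereas you unpack that corollary via the direct method, and the closedness, convexity, and strong-convexity/uniqueness arguments coincide with the paper's.
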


\begin{proof}
  The objective function $\Phi(\omega, \cdot)$ is the sum of three convex, \lsc
  \ functions, it is then convex and \lsc\ By~\ref{hyp:phi_coer},
  $\Phi(\omega, \cdot)$ is also coercive. As $\espacea{U}$ is a reflexive Banach
  space~\ref{hyp:sep_banach} and $\Uad$ is non-empty, closed and
  convex~\ref{hyp:uad}, the set of minimizers $M(\omega)$ is non-empty
  \cite[Corollary III.20]{brezis_analyse_2005}. The convexity of
  $\Phi(\omega, \cdot)$ ensures that $M(\omega)$ is convex and the
  lower-semicontinuity of $\Phi(\omega, \cdot)$ ensures that $M(\omega)$ is
  closed.

  If $K$ is strongly convex, then $\Phi(\omega, \cdot)$ is strongly convex,
  hence the minimizer of $\Phi(\omega, \cdot)$ is unique so $M(\omega)$ is a
  singleton.\footnote{In the case where $K$ is strongly convex, the coercivity
    assumption is not needed as it is implied by the strong convexity of
    $\Phi(\omega, \cdot)$.}
\end{proof}

\begin{theorem}
  \label{thm:mes}
  Under Assumptions~\textup{\ref{hyp:sep_banach}-\ref{hyp:mes_phi}}, the mapping
  $M$ defined in Equation~\eqref{eq:aux_pb_indep} admits a measurable selection.
\end{theorem}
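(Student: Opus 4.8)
The plan is to reduce the statement to the existence result of Corollary~\ref{cor:mes_argmin}, whose only non-trivial hypotheses are that $\Phi$, viewed as a map on $\omeg \times \ban$, is $\trib \otimes \borel{\ban}$-measurable, and that the argmin mapping $M$ is non-empty valued. The second point is immediate: Assumptions~\ref{hyp:sep_banach}--\ref{hyp:phi_coer} are exactly those of Theorem~\ref{thm:argmin_non_empty}, which already guarantees that $M(\omega)$ is non-empty (indeed closed and convex) for every $\omega$. So the entire work lies in establishing the joint measurability of $\Phi$.

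I would prove this by treating separately the three summands defining $\Phi(\omega, u) = K(u) + \proscal{\va{\varphi}(\omega)}{u} + \varepsilon \jad(u, \va{W}(\omega))$ in~\eqref{eq:aux_pb_indep}, all of which are finite-valued by~\ref{hyp:jad} and~\ref{hyp:k}, so that no $\infty - \infty$ ambiguity arises in the sum. The term $K(u)$ depends on $u$ alone; being \lsc \ by~\ref{hyp:k} it is $\borel{\ban}$-measurable, hence $\trib \otimes \borel{\ban}$-measurable as a function of $(\omega, u)$. For the last term, I would apply Proposition~\ref{prop:mes_comp} to $h = \jad$, which is jointly \lsc \ by~\ref{hyp:jad}, using the completeness of $\trib$~\ref{hyp:complete}, the separability of $\ban$~\ref{hyp:sep_banach} and the measurability of $\va{W}$~\ref{hyp:mes_w}: this shows that $(\omega, u) \mapsto \jad(u, \va{W}(\omega))$ is a normal integrand, and Proposition~\ref{prop:joint_mes} then upgrades it to $\trib \otimes \borel{\ban}$-measurability, which is preserved under multiplication by the constant $\varepsilon > 0$.

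The delicate summand is the duality pairing $\proscal{\va{\varphi}(\omega)}{u}$, and this is where I expect the only real obstacle. The map $(\varphi, u) \mapsto \proscal{\varphi}{u}$ is jointly continuous on $\ban\dual \times \ban$ (from $\abs{\proscal{\varphi}{u}} \leq \norm{\varphi}\norm{u}$), hence Borel-measurable for $\borel{\ban\dual \times \ban}$; to conclude I need this Borel $\sigma$-field to factor as $\borel{\ban\dual} \otimes \borel{\ban}$, which by Proposition~\ref{prop:borel_prod} requires both factors to be separable. Separability of $\ban$ is assumed, but separability of $\ban\dual$ is not stated directly and must be extracted from~\ref{hyp:sep_banach}: since $\ban$ is reflexive and separable, the dual of $\ban\dual$ is the bidual $\ban\dual\dual \cong \ban$, which is separable, and a Banach space whose dual is separable is itself separable, forcing $\ban\dual$ to be separable. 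With this in hand, composing the continuous, $\borel{\ban\dual}\otimes\borel{\ban}$-measurable pairing with the map $(\omega, u) \mapsto (\va{\varphi}(\omega), u)$ --- measurable thanks to~\ref{hyp:mes_phi} --- yields the $\trib \otimes \borel{\ban}$-measurability of the pairing term. Summing the three measurable terms gives the joint measurability of $\Phi$, and an application of Corollary~\ref{cor:mes_argmin} with $f = \Phi$ then produces the desired $\trib$-measurable selection of $M$.
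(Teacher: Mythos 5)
Your proposal is correct, and it follows the paper's overall architecture (joint measurability of $\Phi$ plus non-emptiness of $M$ via Theorem~\ref{thm:argmin_non_empty}, then Corollary~\ref{cor:mes_argmin}), but it handles the duality-pairing term by a genuinely different argument. The paper treats $(\omega,u)\mapsto\proscal{\va{\varphi}(\omega)}{u}$ exactly like the $\jad$ term: it applies Proposition~\ref{prop:mes_comp} with $h(u,v)=\proscal{v}{u}$ on $\ban\times\ban\dual$ and with $\va{\varphi}$ playing the role of the measurable ``noise'', obtaining a normal integrand; this route needs the completeness of $\trib$ and the separability of $\ban$, but \emph{not} the separability of $\ban\dual$. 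You instead argue directly that the pairing is jointly norm-continuous, identify $\borel{\ban\dual\times\ban}$ with $\borel{\ban\dual}\otimes\borel{\ban}$ via Proposition~\ref{prop:borel_prod}, and compose with the measurable map $(\omega,u)\mapsto(\va{\varphi}(\omega),u)$. This requires the separability of $\ban\dual$, which is not among Assumptions~\ref{hyp:sep_banach}--\ref{hyp:mes_phi}, but your derivation of it from reflexivity and separability of $\ban$ (the bidual is separable, and a space with separable dual is separable) is a correct standard fact, so your proof is self-contained under the stated hypotheses. What each approach buys: the paper's treatment is uniform across the two $\omega$-dependent terms and avoids any appeal to properties of $\ban\dual$, whereas yours is more elementary for the pairing term --- it bypasses the normal-integrand machinery and the Sainte-Beuve/completeness argument hidden in Proposition~\ref{prop:mes_comp} for that summand (completeness of $\trib$ is of course still needed for the $\jad$ term and for Corollary~\ref{cor:mes_argmin}). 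Your observation that all three summands are finite-valued, so that no $\infty-\infty$ ambiguity arises in the sum, and your simpler justification that $K$ alone is jointly measurable (lsc hence Borel in $u$, constant in $\omega$) are both fine; the paper phrases the latter as the epigraphical mapping being constant, which amounts to the same thing.
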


\begin{proof}
  We start by proving that
  $\Phi(\omega, u) = K(u) + \proscal{\va{\varphi}(\omega)}{u} + \varepsilon
  \jad\bp{u, \va{W}(\omega)}$ is a normal integrand:
  \begin{itemize}
  \item As the function $K$ is \lsc~\ref{hyp:k}, $(\omega, u) \mapsto K(u)$ is a
    normal integrand. Indeed, its epigraphical mapping
    $\omega \mapsto \na{(u, \alpha) \in \ban \times \bbR,\ K(u) \leq \alpha}$ is
    a constant function of $\omega$ and is then measurable.
  \item The Banach space $\ban$ is separable~\ref{hyp:sep_banach} and $\trib$ is
    complete~\ref{hyp:complete}. The space $\ban\dual$ equipped with its Borel
    $\sigma$-field $\borel{\ban\dual}$ is a measurable space. The function
    $\va{\varphi}$ is measurable~\ref{hyp:mes_phi} and the function
    $(u, v) \in \ban \times \ban\dual \mapsto \proscal{v}{u} \in \bbR$ is
    continuous
    hence, in particular, \lsc. Then, Proposition~\ref{prop:mes_comp} applies,
    showing that the function
    $(\omega, u) \mapsto \proscal{\va{\varphi}(\omega)}{u}$ is a normal
    integrand.
  \item With the same reasoning, using that $\ban$ is separable
    \ref{hyp:sep_banach}, $\va{W}$ is measurable~\ref{hyp:mes_w}, $\trib$ is
    complete~\ref{hyp:complete} and $\jad$ is \lsc\ \ref{hyp:jad}, we use
    Proposition~\ref{prop:mes_comp} with $h = \jad$ to deduce that
    $(\omega, u) \mapsto \jad\bp{u, \va{W}(\omega)}$ is a normal integrand.
  \end{itemize}
  The function $\Phi$ is then a normal integrand as the sum of three normal
  integrands. {As $\ban$ is separable~\ref{hyp:sep_banach}, we use}
  Proposition~\ref{prop:joint_mes}  {to get that} $\Phi$ is
  $\trib \otimes \borel{\ban}$-measurable. In addition,
  using~\ref{hyp:uad}-\ref{hyp:phi_coer} to apply
  Theorem~\ref{thm:argmin_non_empty} ensures that $M$ is non-empty
  valued. Moreover, the $\sigma$-field $\trib$ is complete for
  $\prbt$~\ref{hyp:complete}. Hence, by Corollary~\ref{cor:mes_argmin}, we
  conclude that $M: \omega \mapsto \argmin_{u \in \Uad} \Phi(\omega, u)$ admits
  a measurable selection.
\end{proof}

\begin{corollary}
  Under Assumptions~\textup{\ref{hyp:sep_banach}-\ref{hyp:mes_phi}} and if we
  additionally assume that $K$ is strongly convex, then for all
  $\omega \in \omeg, \ \Phi(\omega, \cdot)$, defined in~\eqref{eq:aux_pb_indep},
  has a unique minimizer and the mapping:
  \begin{align*}
    \optaux(\omega) =  \argmin_{u \in \Uad} \Phi(\omega, u) \in \ban
  \end{align*}
  is measurable, that is, $\optaux$ is a random variable.
  \label{cor:unique}
\end{corollary}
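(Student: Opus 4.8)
The plan is to obtain both assertions directly by assembling the two preceding theorems, the only substantive observation being that single-valuedness upgrades the existence of a measurable selection into measurability of the map itself.

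First I would invoke Theorem~\ref{thm:argmin_non_empty}. Assumptions~\ref{hyp:sep_banach}--\ref{hyp:phi_coer}, together with the added hypothesis that $K$ is strongly convex, guarantee that for every $\omega \in \omeg$ the objective $\Phi(\omega, \cdot)$ is strongly convex, hence admits a unique minimizer. This already yields the first assertion and, crucially, shows that the argmin mapping $M$ of Equation~\eqref{eq:aux_pb_indep} is \emph{singleton-valued}, namely $M(\omega) = \{\optaux(\omega)\}$ for all $\omega$. In particular $\optaux$ is a well-defined single-valued function $\omeg \to \ban$.

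Next I would apply Theorem~\ref{thm:mes}: under the full set of Assumptions~\ref{hyp:sep_banach}--\ref{hyp:mes_phi}, which are all in force here, $M$ admits a measurable selection $\gamma$, i.e.\ a measurable map with $\gamma(\omega) \in M(\omega)$ for every $\omega$. Since $M(\omega)$ is the singleton $\{\optaux(\omega)\}$, the only admissible value is $\gamma(\omega) = \optaux(\omega)$, so $\gamma = \optaux$. Consequently $\optaux$ is itself measurable, i.e.\ a random variable, which is the second assertion.

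The only point requiring a word of care — and it is an observation rather than an obstacle — is precisely this identification of the measurable selection with the single-valued argmin map: uniqueness of the minimizer forces \emph{any} selection of $M$ to coincide with $\optaux$, and that is exactly what converts ``$M$ admits a measurable selection'' into ``$\optaux$ is measurable''. No genuine difficulty arises, and no analytical or measure-theoretic input beyond the two cited theorems is needed.
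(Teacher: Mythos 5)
Your proof is correct and is exactly the argument the paper leaves implicit (the corollary is stated without proof immediately after Theorem~\ref{thm:mes}): uniqueness of the minimizer from Theorem~\ref{thm:argmin_non_empty} under strong convexity of $K$, combined with the measurable selection from Theorem~\ref{thm:mes}, and the observation that a selection of a singleton-valued mapping is the mapping itself. Nothing is missing.
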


\subsection{Application to the stochastic APP algorithm}

We aim at studying the iterations of the stochastic APP in terms of random
variables so we consider the argmin set-valued mapping
$M: \omeg \rightrightarrows \espacea{U}$ defined by:
\begin{align}
  M(\omega) = \argmin_{u \in \Uad} K(u) + \proscal{\varepsilon(\va{G}(\omega) + \va{R}(\omega)) - \nabla K (\va{U}(\omega))}{u} + \varepsilon \jad(u, \va{W}(\omega)) \eqfinv
  \label{eq:sto_app_rv}
\end{align}
with $\varepsilon > 0$, $\va{U}(\omega) \in \Uad$,
$\va{W}(\omega) \in \messpace$,
$\va{G}(\omega) \in \partial_{u} \jc(\va{U}(\omega), \va{W}(\omega))$ and
$\va{R}(\omega) \in \ban\dual$. An iteration of the stochastic APP algorithm
consists in solving Problem~\eqref{eq:sto_app_rv}, which is exactly of the form
of Problem~\eqref{eq:aux_pb_indep} with:
\begin{align}
  \va{\varphi}(\omega) = \varepsilon\bp{\va{G}(\omega) + \va{R}(\omega)} - \nabla K \bp{\va{U}(\omega)}\eqfinp
  \label{eq:phi_mir_prox}
\end{align}
In addition to~\ref{hyp:sep_banach}-\ref{hyp:mes_w}, we assume now:
\begin{enumerate}[label=(A\arabic{*}), resume]
\item The function $\jc: \ban \times \messpace \rightarrow \bbR$ that appears in
  Problem~\eqref{eq:master_pb} is jointly \lsc\ and for all $w \in \messpace$,
  $\jc(\cdot, w)$ is proper, convex and subdifferentiable on an open set
  containing $\Uad$. \label{hyp:jc_lsc}
\item The mappings $\va{U}: \omeg \rightarrow \Uad$ and
  $\va{R}: \omeg \rightarrow \ban\dual$ are measurable. \label{hyp:mes_ur}
\end{enumerate}

In~\ref{hyp:mes_ur}, we assume that the mappings $\va{U}$ and $\va{R}$ are
random variables. We cannot do the same for the mapping $\va{G}$ as it must
satisfy $\va{G}(\omega) \in \partial_{u} \jc(\va{U}(\omega), \va{W}(\omega))$
for all $\omega \in \omeg$. In the following proposition, we ensure that there
exists a measurable mapping satisfying this relationship.

\begin{proposition}
  Under Assumptions~\textup{\ref{hyp:sep_banach}, \ref{hyp:complete},
    \ref{hyp:mes_w}, \ref{hyp:jc_lsc}, \ref{hyp:mes_ur}}, the subgradient
  mapping
  $\Gamma : \omega \mapsto \partial_{u} \jc(\va{U}(\omega), \va{W}(\omega))
  \subset \ban\dual$ admits a measurable selection
  $\va{G} : \omeg \rightarrow \ban\dual$.
\end{proposition}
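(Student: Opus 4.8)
The plan is to recognise $\Gamma$ as the subdifferential, along the measurable curve $\omega \mapsto \va{U}(\omega)$, of the integrand obtained by composing $\jc$ with $\va{W}$, and then to chain Hess's measurability theorem for subdifferentials (Proposition~\ref{prop:mes_diff}) with a Castaing representation (Proposition~\ref{prop:cast_rep}) to extract the selection. Concretely, I would set
\[
  f(\omega, u) \defegal \jc\bp{u, \va{W}(\omega)}, \quad (\omega, u) \in \omeg \times \ban,
\]
so that $\Gamma(\omega) = \partial_{u} f\bp{\omega, \va{U}(\omega)}$ is exactly the set-valued mapping $D_{\va{U}}$ appearing in Proposition~\ref{prop:mes_diff}.

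First I would check that $f$ is a normal integrand. By \ref{hyp:jc_lsc}, $\jc$ is jointly \lsc; combined with the separability of $\ban$ \ref{hyp:sep_banach}, the completeness of $\trib$ \ref{hyp:complete} and the measurability of $\va{W}$ \ref{hyp:mes_w}, Proposition~\ref{prop:mes_comp} applied with $h = \jc$ yields that $f$ is a normal integrand. Moreover, for each $\omega$, the function $f(\omega, \cdot) = \jc(\cdot, \va{W}(\omega))$ is proper and convex by \ref{hyp:jc_lsc}.

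Next I would verify the remaining hypotheses of Proposition~\ref{prop:mes_diff} and apply it. The space $\ban$ is separable \ref{hyp:sep_banach} with separable dual $\ban\dual$ \ref{hyp:sep_dual}, and $\va{U}$ is measurable \ref{hyp:mes_ur}. Concerning subdifferentiability, the point to handle with care is that \ref{hyp:jc_lsc} only guarantees subdifferentiability of $\jc(\cdot, w)$ on an open set containing $\Uad$, and not on all of $\ban$; this is reconciled by observing that the range of $\va{U}$ lies in $\Uad$ by \ref{hyp:mes_ur}, so the subdifferential is evaluated precisely at points $\va{U}(\omega) \in \Uad$, where it is non-empty. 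Proposition~\ref{prop:mes_diff} then gives that $\Gamma = D_{\va{U}}$ is Effros-measurable.

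Finally I would extract the selection. The set $\Gamma(\omega) = \partial_{u}\jc(\va{U}(\omega), \va{W}(\omega))$ is non-empty, as just noted, and, being the subdifferential of a convex function, it is convex and weak$^{\star}$-closed, hence norm-closed in $\ban\dual$. Since $\ban\dual$ is a separable Banach space \ref{hyp:sep_dual} and $\Gamma$ is non-empty-valued, closed-valued and Effros-measurable, Proposition~\ref{prop:cast_rep} furnishes a Castaing representation $\na{\gamma_{n}}_{n \in \bbN}$ of $\Gamma$; each $\gamma_{n}$ is measurable with $\gamma_{n}(\omega) \in \Gamma(\omega)$ for every $\omega$, so fixing any index and setting $\va{G} \defegal \gamma_{0}$ provides the desired measurable selection. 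The main obstacle here is not a deep difficulty but bookkeeping: one must make the hypotheses of the two imported results line up, the only genuinely delicate point being the subdifferentiability clause of Proposition~\ref{prop:mes_diff}, which is stated for all $\omega$ yet is only needed, and only holds, at the points $\va{U}(\omega) \in \Uad$.
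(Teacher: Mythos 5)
Your proposal is correct and follows essentially the same route as the paper: define $f(\omega,u)=\jc(u,\va{W}(\omega))$, invoke Proposition~\ref{prop:mes_comp} to get a normal integrand, apply Proposition~\ref{prop:mes_diff} for Effros-measurability of $\Gamma$, and conclude with a Castaing representation via Proposition~\ref{prop:cast_rep}. Your extra remarks on the subdifferentiability clause (only needed at points of $\Uad$) and on closedness of the subdifferential are slightly more careful than the paper's own write-up, but do not change the argument.
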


\begin{proof}
  Let
  $
  f(\omega, u) = \jc\bp{u, \va{W}(\omega)}
  $ for $\omega \in \omeg, \ u \in \ban$.
  \begin{itemize}
  \item
    Using that $\ban$ is separable \ref{hyp:sep_banach}, $\va{W}$ is
    measurable~\ref{hyp:mes_w}, $\trib$ is complete~\ref{hyp:complete} and $\jc$
    is \lsc\ \ref{hyp:jc_lsc}, Proposition~\ref{prop:mes_comp} with $h = \jc$
    shows that $f$ is a normal integrand.
  \item We have that for all
    $\omega \in \omeg,\ \Gamma(\omega) = \partial_{u} f\bp{\omega,
      \va{u}(\omega)}$. With~\ref{hyp:jc_lsc}, we get that $f(\omega, \cdot)$ is
    proper for all $\omega \in \omeg$.  We have that $\ban$ and $\ban\dual$ are
    separable~\ref{hyp:sep_banach}, $\va{U}$ is measurable~\ref{hyp:mes_ur} and
    $f$ is a normal integrand, so by~Proposition~\ref{prop:mes_diff}, $\Gamma$
    is Effros-measurable.
  \end{itemize}
  Assumption~\ref{hyp:jc_lsc} ensures that $\Gamma$ is non-empty valued. In
  addition, $\Gamma$ is Effros-measurable and closed-valued in $\ban\dual$ which
  is separable. By Proposition~\ref{prop:cast_rep}, $\Gamma$ admits a measurable
  selection. This means that there exists a measurable function
  $\va{G}: \omeg \rightarrow \ban\dual$ such that for all
  $\omega \in \omeg, \ \va{G}(\omega) \in \Gamma(\omega) = \partial_{u}
  \jc(\va{U}(\omega), \va{W}(\omega))$.
\end{proof}

In the sequel, $\va{G}$ denotes a measurable selection of $\Gamma$.  In order to
apply Theorem~\ref{thm:mes} to prove that the iterates of the stochastic APP
algorithm are measurable, we must ensure that Assumption~\ref{hyp:mes_phi} is
satisfied, that is, we must show that the mapping $\va{\varphi}$ defined
in~\eqref{eq:phi_mir_prox} is measurable. We prove in
Proposition~\ref{prop:phi_mes} that Assumption~\ref{hyp:mes_phi} can be deduced
from the other assumptions.

\begin{proposition}
  Under Assumptions~\textup{\ref{hyp:sep_banach}, \ref{hyp:k}, \ref{hyp:mes_w},
    \ref{hyp:jc_lsc}, \ref{hyp:mes_ur}}, the function $\va{\varphi}$ is
  measurable.\label{item:phi_mes}
  \label{prop:phi_mes}
\end{proposition}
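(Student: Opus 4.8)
The goal is to show that the map
\[
  \va{\varphi}(\omega) = \varepsilon\bp{\va{G}(\omega) + \va{R}(\omega)} - \nabla K \bp{\va{U}(\omega)}
\]
is measurable from $\omeg$ to $\ban\dual$. The plan is to decompose $\va{\varphi}$ into its three constituent terms and establish the measurability of each separately, then conclude by the stability of measurability under (finite) linear combinations, which holds because $\ban\dual$ is a separable Banach space and hence its Borel $\sigma$-field is compatible with the vector space operations (sums and scalar multiples of measurable maps into a separable Banach space are measurable).

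First I would treat the two easy terms. The map $\va{R}: \omeg \rightarrow \ban\dual$ is measurable by direct assumption~\ref{hyp:mes_ur}, and multiplying by the fixed scalar $\varepsilon > 0$ preserves measurability. The map $\va{G}: \omeg \rightarrow \ban\dual$ is the measurable selection of the subgradient mapping $\Gamma$ constructed in the previous proposition, so it is measurable as well; again scaling by $\varepsilon$ is harmless. This disposes of the term $\varepsilon(\va{G} + \va{R})$.

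The substantive step is the measurability of $\omega \mapsto \nabla K\bp{\va{U}(\omega)}$, which is the composition of the Gateaux-gradient map $\nabla K : \Uad \rightarrow \ban\dual$ with the measurable map $\va{U}: \omeg \rightarrow \Uad$ (measurable by~\ref{hyp:mes_ur}). To conclude by composition it suffices to show that $\nabla K$ is a Borel-measurable map from $\ban$ (or the open set containing $\Uad$ on which it is defined by~\ref{hyp:k}) into $\ban\dual$. The clean way to get this is to observe that $K$ is convex and Gateaux-differentiable, so its gradient $\nabla K$ coincides with the subdifferential selection: for each $u$, $\partial K(u) = \{\nabla K(u)\}$ is a singleton. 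I would then invoke Proposition~\ref{prop:mes_diff}: taking the trivial normal integrand $(\omega, u) \mapsto K(u)$ (which is a normal integrand since $K$ is \lsc, as already noted in the proof of Theorem~\ref{thm:mes}, its epigraphical mapping being constant in $\omega$) and applying it with the measurable mapping $\va{U}$, the set-valued mapping $\omega \mapsto \partial_u K\bp{\va{U}(\omega)} = \{\nabla K(\va{U}(\omega))\}$ is Effros-measurable. Since it is single-valued into the separable dual $\ban\dual$~\ref{hyp:sep_dual}, its unique selection $\omega \mapsto \nabla K(\va{U}(\omega))$ is exactly this mapping and is measurable by Proposition~\ref{prop:cast_rep} (the Castaing representation of a singleton-valued Effros-measurable map is the map itself).

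The main obstacle is the composition $\nabla K \comp \va{U}$: measurability of a composition requires Borel-measurability of $\nabla K$, and for a merely Gateaux-differentiable convex function one should not assume $\nabla K$ is continuous. The route through Proposition~\ref{prop:mes_diff} sidesteps this by working directly with the subdifferential of the normal integrand evaluated along $\va{U}$, so that the measurability of the composed object is obtained in one stroke rather than by composing a possibly-discontinuous gradient with $\va{U}$; this is why the separability of $\ban\dual$~\ref{hyp:sep_dual} and the completeness of $\trib$~\ref{hyp:complete} enter. Having established the measurability of all three terms as maps into the separable Banach space $\ban\dual$, their linear combination $\va{\varphi}$ is measurable, which is the claim.
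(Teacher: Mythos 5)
Your proof is correct and follows essentially the same route as the paper: both treat $\omega\mapsto\nabla K\bp{\va{U}(\omega)}$ by viewing $(\omega,u)\mapsto K(u)$ as a normal integrand and invoking Proposition~\ref{prop:mes_diff} along the measurable mapping $\va{U}$, then conclude that $\va{\varphi}$ is a sum of measurable maps into the separable space $\ban\dual$. The only slip is your remark that completeness of $\trib$~\ref{hyp:complete} enters here --- it is neither among the hypotheses of this proposition nor required by Proposition~\ref{prop:mes_diff}, so it plays no role in this argument.
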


\begin{proof}
  We have already seen in the proof of Theorem~\ref{thm:mes} that
  $\Lambda: (\omega, u) \mapsto K(u)$ is a normal
  integrand. Assumption~\ref{hyp:k} ensures that $\Lambda(\omega, \cdot)$ is
  proper for all $\omega \in \omeg$. We have that $\ban$ and $\ban\dual$ are
  separable~\ref{hyp:sep_banach}, $\va{U}$ is measurable~\ref{hyp:mes_ur}, so
  $\omega \mapsto \nabla_{u} \Lambda\bp{\omega, \va{U}(\omega)} = \nabla
  K\bp{\va{U}(\omega)}$ is measurable by
  Proposition~\ref{prop:mes_diff}. Finally, $\va{R}$ is also
  measurable~\ref{hyp:mes_ur}, so $\va{\varphi}$ is measurable as a sum of
  measurable functions.
\end{proof}

From Theorem~\ref{thm:mes} and Proposition~\ref{prop:phi_mes}, we have obtained
that under Assumptions~\ref{hyp:sep_banach}-\ref{hyp:mes_w},
\ref{hyp:jc_lsc},\ref{hyp:mes_ur}, the mapping $M$ defined
in~\eqref{eq:sto_app_rv} admits a measurable selection. Now, we give the
measurability result for the iterates of the stochastic APP algorithm, which is
defined by the following recursion for $\omega \in \omeg$ and $k \in \bbN$:
\begin{align}
  M_{0}(\omega)
  &= \na{u_{0}} \subset \Uad \eqfinv \label{eq:app_iter}
  \\
  M_{k+1}(\omega)
  &= \argmin_{u \in \Uad} K(u) + \proscal{\varepsilon_{k}\bp{\va{G}_{k}(\omega) + \va{R}_{k}(\omega)} - \nabla K \bp{\va{U}_{k}(\omega)}}{u}\nonumber
  \\
  &\hspace{6cm}+ \varepsilon_{k} \jad\bp{u, \va{W}_{k+1}(\omega)}\eqfinv \nonumber
\end{align}

\begin{theorem}
  Under Assumptions~\textup{\ref{hyp:sep_banach}-\ref{hyp:mes_w},
    \ref{hyp:jc_lsc}, \ref{hyp:mes_ur}}, for all $k \in \bbN$, the mapping
  $M_{k}$ that defines the $k$-th iteration of the stochastic APP
  algorithm~\eqref{eq:app_iter} admits a measurable selection.
  \label{thm:mes_selec_sto_app}
\end{theorem}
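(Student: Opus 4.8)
The plan is to argue by induction on $k \in \bbN$, threading together the three results already assembled so that, at each step, the measurability of the current iterate supplies exactly the hypotheses needed to manufacture the next one. For the base case $k = 0$, the mapping $M_{0}(\omega) = \na{u_{0}}$ is constant, so $\va{U}_{0} \equiv u_{0}$ is a (trivially measurable) selection.

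For the inductive step, I would assume that $M_{k}$ admits a measurable selection $\va{U}_{k} : \omeg \rightarrow \Uad$ and proceed in three stages. First, I would invoke the subgradient selection proposition established just above, applied with the measurable iterate $\va{U}_{k}$ playing the role of $\va{U}$: under \ref{hyp:sep_banach}, \ref{hyp:complete}, \ref{hyp:mes_w}, \ref{hyp:sep_dual}--\ref{hyp:mes_ur} this produces a measurable selection $\va{G}_{k}$ of the subgradient mapping $\omega \mapsto \partial_{u} \jc\bp{\va{U}_{k}(\omega), \va{W}_{k+1}(\omega)}$, each $\va{W}_{k+1}$ being a random variable so that~\ref{hyp:mes_w} applies to it verbatim. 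Second, since $\va{R}_{k}$ is measurable by assumption and $\va{U}_{k}$ is measurable by the induction hypothesis, Proposition~\ref{prop:phi_mes} yields that $\va{\varphi}_{k}(\omega) = \varepsilon_{k}\bp{\va{G}_{k}(\omega) + \va{R}_{k}(\omega)} - \nabla K\bp{\va{U}_{k}(\omega)}$ is measurable; this is precisely the verification of Assumption~\ref{hyp:mes_phi} for the problem defining $M_{k+1}$. Third, observing that $M_{k+1}$ is exactly of the form~\eqref{eq:aux_pb_indep} with $\va{\varphi} = \va{\varphi}_{k}$, I would apply Theorem~\ref{thm:mes} to conclude that $M_{k+1}$ admits a measurable selection $\va{U}_{k+1}$, closing the induction.

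The genuine content lies not in the induction bookkeeping but in checking that the three imported results compose correctly. The delicate point is that every application of Theorem~\ref{thm:mes} needs Assumption~\ref{hyp:mes_phi}, which is deliberately \emph{not} among the standing hypotheses and must instead be re-derived at each step from the measurability of the previous iterate $\va{U}_{k}$ through Proposition~\ref{prop:phi_mes}; and that the subgradient must be chosen \emph{measurably}, not merely to exist pointwise, so that $\va{G}_{k}$ can legitimately enter $\va{\varphi}_{k}$. All the heavy measure-theoretic machinery — universal measurability, completeness of $\trib$, and the Sainte-Beuve projection theorem — has already been absorbed into Theorem~\ref{thm:mes} and the preceding propositions, so the remaining work is essentially to confirm that the measurability hypotheses propagate cleanly from iteration $k$ to iteration $k+1$.
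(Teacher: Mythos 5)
Your proof is correct and follows essentially the same route as the paper: the paper likewise proves the theorem by a short induction, with the base case $\va{U}_{0}\equiv u_{0}$ and the inductive step obtained by combining the subgradient selection proposition, Proposition~\ref{prop:phi_mes}, and Theorem~\ref{thm:mes} to show that the mapping~\eqref{eq:sto_app_rv} admits a measurable selection at each iteration. Your write-up merely makes explicit the bookkeeping (in particular the re-derivation of~\ref{hyp:mes_phi} at each step) that the paper compresses into a single sentence.
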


\begin{proof}
  The mapping $M_{0}$ admits a measurable selection defined by
  $\va{U}_{0}(\omega) = u_{0}$. Then, by iteratively using the fact
  that~\eqref{eq:sto_app_rv} admits a measurable selection, we deduce that for
  all $k \in \bbN$, $M_{k}$ admits a measurable selection.
\end{proof}

\begin{corollary}
  Assume that~\textup{\ref{hyp:sep_banach}-\ref{hyp:mes_w}, \ref{hyp:jc_lsc},
    \ref{hyp:mes_ur}} are satisfied and that the auxiliary mapping $K$ is
  strongly convex. Then, for all $k \in \bbN$, the unique mapping $\va{U}_{k}$
  that defines the $k$-th iterate of the stochastic APP algorithm is measurable.
  \label{cor:mes_iter}
\end{corollary}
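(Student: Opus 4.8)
The plan is to obtain this corollary as a direct consequence of Theorem~\ref{thm:mes_selec_sto_app} and Corollary~\ref{cor:unique}: the added strong convexity of $K$ is exactly what turns the measurable \emph{selection} of each $M_{k}$ into a single measurable \emph{mapping}. I would argue by induction on $k \in \bbN$, carrying the joint hypothesis that $M_{k}$ is singleton-valued and that its unique element $\va{U}_{k}$ is measurable.

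For the base case $k = 0$, the mapping $M_{0}(\omega) = \na{u_{0}}$ from~\eqref{eq:app_iter} is a singleton and $\va{U}_{0} \equiv u_{0}$ is constant, hence measurable. For the inductive step, assume $\va{U}_{k}$ is measurable. I would first check that every datum of the $(k+1)$-th subproblem is measurable: $\va{W}_{k+1}$ by~\ref{hyp:mes_w}, $\va{R}_{k}$ by~\ref{hyp:mes_ur}, and a measurable subgradient selection $\va{G}_{k}$ with $\va{G}_{k}(\omega) \in \partial_{u}\jc(\va{U}_{k}(\omega), \va{W}_{k+1}(\omega))$ exists by the subgradient-selection proposition, whose hypotheses hold precisely because $\va{U}_{k}$ is measurable. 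Consequently, Proposition~\ref{prop:phi_mes} applies and shows that the dual-valued map $\va{\varphi}$ of~\eqref{eq:phi_mir_prox} (formed from $\va{U}_{k}, \va{R}_{k}, \va{G}_{k}, \va{W}_{k+1}$) is measurable, i.e. Assumption~\ref{hyp:mes_phi} is satisfied. The subproblem defining $M_{k+1}$ in~\eqref{eq:sto_app_rv} is then exactly of the form~\eqref{eq:aux_pb_indep} with a measurable $\va{\varphi}$.

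At this point I would invoke Corollary~\ref{cor:unique}, all of whose hypotheses (\ref{hyp:sep_banach}-\ref{hyp:mes_phi} together with strong convexity of $K$) are now verified: by strong convexity of $K$ the objective $\Phi(\omega, \cdot)$ is strongly convex for every $\omega$, so $M_{k+1}(\omega)$ is a singleton (Theorem~\ref{thm:argmin_non_empty}), and the mapping $\omega \mapsto \argmin_{u \in \Uad}\Phi(\omega, u)$ is measurable. Since this argmin is single-valued, it coincides with $\va{U}_{k+1}$, which is therefore a well-defined random variable. This closes the induction and yields the claim for every $k \in \bbN$.

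I do not expect a real obstacle, since the analytical work is already done in the preceding results; the only point demanding care is the well-foundedness of the recursion. Measurability of the stage-$(k+1)$ data is not postulated but \emph{deduced} from the inductive hypothesis: one needs $\va{U}_{k}$ measurable before $\va{G}_{k}$, and hence $\va{\varphi}$, can be guaranteed measurable. Phrasing the induction with the conjunction ``$M_{k}$ singleton-valued and $\va{U}_{k}$ measurable'' as the running hypothesis makes this dependency explicit and prevents the circularity that a naive one-shot appeal to Corollary~\ref{cor:unique} would risk.
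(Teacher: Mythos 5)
Your proposal is correct and follows essentially the same route as the paper: the paper delegates the induction to Theorem~\ref{thm:mes_selec_sto_app} (which iterates the measurable-selection result for~\eqref{eq:sto_app_rv}, itself obtained from Theorem~\ref{thm:mes} together with Proposition~\ref{prop:phi_mes} and the measurable subgradient selection $\va{G}_{k}$) and then adds single-valuedness via Corollary~\ref{cor:unique}, exactly the two ingredients you combine. Your only departure is presentational --- you unfold that induction explicitly and carry the singleton-valuedness inside the inductive hypothesis, which makes the well-foundedness of the recursion (measurability of $\va{U}_{k}$ before $\va{G}_{k}$ and $\va{\varphi}$) more visible than in the paper's terser statement.
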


\begin{proof}
  If $K$ is strongly convex, from Corollary~\ref{cor:unique}, we get that
  $M_{k}$ is single-valued, so the iterate $\va{U}_{k}$ is uniquely defined. The
  measurability of $\va{U}_{k}$ follows from
  Theorem~\ref{thm:mes_selec_sto_app}.
\end{proof}

\begin{remark}
  In~\cite[Chapter 14]{rockafellar_variational_2004}, Rockafellar exposes a
  whole set of measurability results in the case where $\ban$ is
  finite-dimensional. The finite-dimensional framework allows to avoid some
  technicalities of the infinite-dimensional case. In particular, the
  completeness assumption~\ref{hyp:complete} is not needed as shown
  by~\cite[Proposition 14.37]{rockafellar_variational_2004} which is the
  analogous of Proposition~\ref{prop:mes_argmin} in the finite-dimensional case.
\end{remark}

\begin{remark}
  In Problem~\eqref{eq:master_pb}, when $\ban$ is a Hilbert space,
  $\Uad = \ban$, $\jad = 0$ {and $\jc$ is assumed to be differentiable
    with respect to $u$}, we can use stochastic gradient descent. Then, we have
  the explicit formula:
  \begin{align}
    \va{U}_{k+1} = \va{U}_{k} - \varepsilon_{k} \nabla_{u} \jc(\va{U}_{k}, \va{W}_{k+1}) \eqfinp
    \label{eq:sgd_va}
  \end{align}
  Under Assumptions~\ref{hyp:sep_banach}, \ref{hyp:mes_w}, \ref{hyp:jc_lsc}, the
  measurability of the iterates is directly obtained by induction using the
  explicit formula~\eqref{eq:sgd_va}.
\end{remark}

\section{Convergence results and efficiency estimates}
\label{sec:conv}

In this section, we prove the convergence of the stochastic APP algorithm for
solving Problem~\eqref{eq:master_pb}.  In addition, we give efficiency estimates
for the convergence of function values. Some technical results for the proofs of
this section are given in the appendix.

\subsection{Convergence of the stochastic APP algorithm}
\label{subsec:conv_app}

Let $\na{\tribu{F}_{k}}_{k \in \bbN}$ be a filtration with
$\tribu{F}_{k} = \sigma\vardelim{\va{W}_{1}, \ldots, \va{W}_{k}}$, where
$\vardelim{\va{W}_{1}, \ldots, \va{W}_{k}}$ are the random variables that appear
in the successive iterations of the stochastic APP algorithm~\eqref{eq:app_iter}
defined on the probability space $(\omeg, \trib, \prbt)$. Recall that,
in~\eqref{eq:app_iter},
$\va{G}_{k} \in \partial_{u} \jc(\va{U}_{k}, \va{W}_{k+1})$~\footnote{In this
  expression, the $\in$ relationship is to be understood \emph{$\omega$ by
    $\omega$}.} is an unbiased stochastic gradient, whereas $\va{R}_{k}$
represents a bias on the gradient.

Convergence results for the stochastic APP algorithm are already proved
in~\cite{culioli_algorithmes_1987,culioli_decomposition/coordination_1990} when
$\ban$ is a Hilbert space (possibly infinite-dimensional) and when there is no
bias $\va{R}_{k}$.  In~\cite{geiersbach_projected_2019}, convergence of the
projected stochastic gradient descent is proved in a Hilbert space and with a
bias $\va{R}_{k}$. For stochastic mirror descent {in the
  finite-dimensional setting}, convergence results and efficiency estimates can
be found in~\cite{nemirovski_robust_2009}, but no bias is considered. Here, we
present convergence results for the stochastic APP algorithm in a
  reflexive {separable} Banach space and we allow for a bias $\va{R}_{k}$, hence
generalizing previous results.

In addition to~\ref{hyp:sep_banach}-\ref{hyp:mes_w}, \ref{hyp:jc_lsc},
\ref{hyp:mes_ur}, we make the following assumptions:
\begin{enumerate}[resume, label=(A\arabic*)]
\item The functions $\jc(\cdot, w): \ban \rightarrow \bbR$ and
  $\jad(\cdot, w): \ban \rightarrow \bbR$ have linearly bounded subgradient in
  $u$, uniformly in $w \in \messpace$:
  \begin{align*}
    \left\{
    \begin{aligned}
        &\exists c_{1}, c_{2} > 0 \eqsepv
        \forall (u, w) \in \Uad \times \messpace \eqsepv
        \forall r \in \partial_{u} \jc(u,w) \eqsepv
        \nnorm{r} \leq c_{1} \nnorm{u} + c_{2} \eqfinp\\
        &\exists d_{1}, d_{2} > 0 \eqsepv
        \forall (u, w) \in \Uad \times \messpace \eqsepv
        \forall s \in \partial_{u} \jad(u,w) \eqsepv
        \nnorm{s} \leq d_{1} \nnorm{u} + d_{2} \eqfinp
      \end{aligned}
    \right.
  \end{align*}
  \label{hyp:sglb}
\item The objective function $\J$ is coercive on $\Uad$. \label{hyp:coer}
\item The function $K$ is $b$-strongly convex for $b > 0$, meaning that
  for all $u,v \in \espacea{U}$:
  \begin{align*}
      K(v) \geq K(u) + \bscal{\nabla K(u)}{v-u} + \frac{b}{2} \sqnorm{u-v} \eqfinv
  \end{align*}
  and $\nabla K$ is $L_{K}$-Lipschitz continuous with $L_{K} > 0$, that is, for all $u, v \in \ban$:
  \begin{align*}
    \normdual{\nabla K(v) - \nabla K(u) } \leq L_{K} \norm{v - u} \eqfinv
  \end{align*}
  where $\normdual{\cdot}$ is the dual norm on $\ban\dual$. \label{hyp:aux_func}
\item \label{hyp:sig_seq} The sequence of step sizes
  $\na{\varepsilon_{k}}_{k \in \bbN}$, {with $\varepsilon_{k} > 0$ for
    all $k$}, satisfies $\sum_{k} \varepsilon_{k} = + \infty$ and
  $\sum_{k} \varepsilon_{k}^{2} < + \infty$.
\item\label{hyp:err_sum} Each $\va{R}_{k}$ is measurable with respect to
  $\tribu{F}_{k+1}$, the sequence of random variables
  $\ba{\va{R}_{k}}_{k \in \bbN}$ is $\prbt$-almost surely (\Pas)
  bounded,\footnote{The set
    $\ba{\omega \in \omeg, \ \na{\va{R}_{k}(\omega)}_{k \in \bbN} \ \text{is
        unbounded}}$ is negligible.} and we have:
  \begin{align*}
    \sum_{k \in \bbN} \varepsilon_{k} \bespc{\norm{\va{R}_{k}}}{\tribu{F}_{k}} < + \infty \quad \Pas
  \end{align*}
\item\label{hyp:quasi-integrable} { For all integers $k\ge 1$, the
    integrand $\hkintegrand:\ban \times \Omega \rightarrow \bbR$ defined by
    $\hkintegrand(u,\omega)=(\jc+\jad)(u,\va{W}_{k}(\omega))$ for all
    $(u,\omega)\in \ban{\times}\Omega$ is
    $\trib$-quasi-integrable~\cite{thibault}. That is, for each $k\ge 1$, there
    exists an integrable mapping $\psi_k:\Omega \to {\bbR}$ such that
    $\psi_k \le \hkintegrand(u,\cdot)$ for all $u\in \ban$.}
\end{enumerate}

{
We make some comments on Assumptions~\ref{hyp:sglb}-\ref{hyp:quasi-integrable}:
\begin{itemize}
  \item Assumption~\ref{hyp:sglb} is a relaxation of the standard assumption of bounded gradients, used in~\cite{nemirovski_robust_2009} for example.
  \item Assumption~\ref{hyp:coer} is used to ensure the existence of solutions to Problem~\eqref{eq:master_pb} when $\Uad$ is an unbounded domain. When $\Uad$ is bounded, Assumption~\ref{hyp:coer} is automatically satisfied.
  \item Assumption~\ref{hyp:aux_func} is related to the user-defined function $K$ and not to the intrinsic characteristics of the minimization problem, thus it is not restrictive.
  \item Assumptions~\ref{hyp:sig_seq} and~\ref{hyp:err_sum} are standard to ensure the convergence of SA schemes. In particular, \ref{hyp:err_sum} ensures that the noise on the gradient vanishes sufficiently fast.
  \item Assumption~\ref{hyp:quasi-integrable} is a technical assumption used to ensure that the conditional expectation of the integrand $\hkintegrand$ is defined. As a sufficient condition, assuming that $\jc+\jad$ is nonnegative would ensure
    Assumption~\ref{hyp:quasi-integrable}.
\end{itemize}
}

Assumptions~\ref{hyp:sep_banach}-\ref{hyp:jad}, \ref{hyp:jc_lsc} and \ref{hyp:coer}
ensure that $J$ is well-defined, convex, \lsc, coercive and attains its minimum
on $\Uad$. Hence, Problem~\eqref{eq:master_pb} has a non-empty set of solutions
$U\opt$. From now
on, $K$ is supposed to be $b$-strongly convex, so by
Corollary~\ref{cor:mes_iter}, the problem solved at each iteration~$k$ of the
stochastic APP algorithm admits a unique solution $\va{U}_{k+1}$, which is
measurable.

We start by a technical lemma which gives a key inequality
that will be used for the proof of convergence of the stochastic APP algorithm in Theorem~\ref{thm:cv} and to derive efficiency estimates in Theorems~\ref{thm:cv_rate_expec_avrg} and~\ref{thm:cv_rate_expec_last}.

\begin{lemma}
  \label{lem:lyap_bd}
  Let $v \in \Uad$ and consider the Lyapunov function:
  \begin{equation}
    \label{def:lyap}
    \ell_{v}(u) = K(v) - K(u) - \bscal{\nabla K(u)}{v-u}
    \eqsepv u \in \Uad \eqfinp
  \end{equation}
  Let $\na{u_{k}}_{k \in \bbN}$ be the sequence of iterates of
  Algorithm~\ref{alg:sto_ppa} corresponding to the realization
  $\na{w_{k}}_{k \in \bbN}$ of the stochastic process
  $\ba{\va{W}_{k}}_{k \in \bbN}$. Then, under
  Assumptions~\textup{\ref{hyp:jc_lsc}, \ref{hyp:sglb}} and
  \textup{\ref{hyp:aux_func}}, there exist constants
  $\alpha, \beta, \gamma, \delta > 0$ such that, for all $k \in \bbN$:
  \begin{multline}
    \ell_{v}(u_{k+1}) \leq
    \Bp{1+\alpha\varepsilon_{k}^{2}
         + \frac{2}{b}\varepsilon_{k}\nnorm{r_{k}}} \ell_{v}(u_{k})
         + \beta\varepsilon_{k}^{2}\ell_{v}(u_{k+1}) \\
    + \Bp{\gamma\varepsilon_{k}^{2}+\varepsilon_{k}\nnorm{r_{k}}
        +\delta\np{\varepsilon_{k}\nnorm{r_{k}}}^{2}} \\
    + \varepsilon_{k} \bp{(\jc+\jad)(v,w_{k+1})-(\jc+\jad)(u_{k},w_{k+1})} \eqfinv
    \label{eq:bound_real}
  \end{multline}
  where we recall that $b > 0$ is the strong convexity constant of $K$,
  $\varepsilon_{k}$ is the step size and $r_{k}$ is the additive error on the
  stochastic gradient at iteration $k$ of the algorithm.
\end{lemma}

\begin{proof}
  By~\ref{hyp:aux_func}, $K$ is $b$-strongly convex implying that:
  \begin{align}
    \frac{b}{2} \nnorm{u-v}^{2} \leq \ell_{v}(u) \eqfinp
    \label{eq:stoch-pbsansc-ppa-lyap}
  \end{align}
  This shows that $\ell_{v}$ is lower bounded and coercive. Let $k \in \bbN$, as
  $u_{k+1}$ is solution of~\eqref{pb:stoch-pbsansc-ppa}, it solves the following
  variational inequality, {characterizing the minimum of the sum of a
    Gateaux-differentiable and a non-differentiable function~\cite[Chapter~II,
    Proposition 2.2]{ekeland_convex_1976}}: for all $u \in \Uad$,
    \begin{multline}
      \label{eq:stoch-pbsansc-ppa-co}
      \bscal{\nabla K(u_{k+1})-\nabla K(u_{k})+\varepsilon_{k}(g_{k}
      +r_{k})}{u-u_{k+1}}
      \\
      + \varepsilon_{k}( \jad(u,w_{k+1})-\jad(u_{k+1},w_{k+1}))
      \geq 0 \eqfinp
    \end{multline}
    We have:
    \begin{multline}
      \ell_{v}(u_{k+1}) - \ell_{v}(u_{k}) =
         \underbrace{K(u_{k})-K(u_{k+1}) -
                     \bscal{\gradi{K}(u_{k})}{u_{k}-u_{k+1}}}_{T_{1}} \\
         + \underbrace{\bscal{\gradi{K}(u_{k}) -
                     \gradi{K}(u_{k+1})}{v-u_{k+1}}}_{T_{2}} \eqfinp
    \end{multline}
      As $K$ is convex~\ref{hyp:aux_func}, we get
        $T_{1} \leq 0.$
      The optimality condition~\eqref{eq:stoch-pbsansc-ppa-co}
      at~$u = v$ implies:
      \begin{align*}
        T_{2} & \leq \varepsilon_{k} \bscal{g_{k}+r_{k}}{v-u_{k+1}}
        + \varepsilon_{k} \bp{\jad(v,w_{k+1})-\jad(u_{k+1},w_{k+1})} \\
        & \leq \varepsilon_{k} \Bp{
          \underbrace{\bscal{g_{k}}{v-u_{k}}+
          \jad(v,w_{k+1})-\jad(u_{k},w_{k+1})}_{T_{3}}
          + \underbrace{\bscal{r_{k}}{v-u_{k}}}_{T_{4}} \\
        & \hspace{2cm}
          + \underbrace{\bscal{g_{k}+r_{k}}{u_{k}-u_{k+1}}+
           \jad(u_{k},w_{k+1})-\jad(u_{k+1},w_{k+1})}_{T_{5}} } \eqfinp
      \end{align*}
      \begin{itemize}
        \item As $\jc(\cdot,w_{k+1})$ is convex~\ref{hyp:jc_lsc}, we get:
        \begin{align*}
          T_{3} \leq \bp{\jc+\jad}(v,w_{k+1}) - \bp{\jc+\jad}(u_{k},w_{k+1}) \eqfinp
        \end{align*}
        \item By Cauchy-Schwarz inequality, using~$a\leq a^2 +1$ for~$a \geq 0$ and~\eqref{eq:stoch-pbsansc-ppa-lyap}, we get:
        \begin{align*}
          T_{4}
          & \leq \nnorm{r_{k}} \nnorm{v-u_{k}} \leq \nnorm{r_{k}} \bp{\nnorm{v-u_{k}}^{2}+1} \leq \nnorm{r_{k}} + \frac{2}{b} \ell_{v}(u_{k}) \nnorm{r_{k}} \eqfinp
        \end{align*}
      \item The optimality condition~\eqref{eq:stoch-pbsansc-ppa-co}
        at~$u = u_{k}$ and the strong monotonicity of~$\nabla K$, that arises
        from~\ref{hyp:aux_func}, imply:
        \begin{align}
          \begin{aligned}
          \label{eq:co-uk}
          b \bnorm{u_{k+1}-u_{k}}^{2}
          &\leq
          \varepsilon_{k} \bp{\nscal{g_{k}+r_{k}}{u_{k}-u_{k+1}}
          \\
          &\hspace{1cm} + \jad(u_{k},w_{k+1})-\jad(u_{k+1},w_{k+1})} \eqfinv
        \end{aligned}
        \end{align}
        where we recognize~{$\varepsilon_{k}T_{5}$} as the right-hand
        side.  Using the linearly bounded subgradient property
        of~$\jad$~\ref{hyp:sglb} with the result of
        Proposition~\ref{prop:sglb-lip}, we get:
        \begin{align*}
          \babs{\jad(u_{k},w_{k+1})&-\jad(u_{k+1},w_{k+1})}
          \\
          & \leq \Bp{d_{1}\max\ba{\nnorm{u_{k}},\nnorm{u_{k+1}}}+d_{2}}
            \nnorm{u_{k}-u_{k+1}} \eqfinv \\
          & \leq \Bp{d_{1}\bp{\nnorm{u_{k}}+\nnorm{u_{k+1}}}+d_{2}}
            \nnorm{u_{k}-u_{k+1}} \eqfinp
        \end{align*}
        With Cauchy-Schwarz inequality on the first term of~$T_{5}$, we have:
        \begin{align*}
          T_{5} \leq
          \nnorm{g_{k}+r_{k}}\nnorm{u_{k}-u_{k+1}} +
          \bp{d_{1}\nnorm{u_{k}}+d_{1}\nnorm{u_{k+1}}+d_{2}}
          \nnorm{u_{k}-u_{k+1}} \eqfinp
        \end{align*}
        By the triangular inequality and Assumption~\ref{hyp:sglb} for~$\jc$, we
        deduce that there exist positive constants~$e_{1}$, $e_{2}$ and $e_{3}$
        such that:
        \begin{align*}
          T_{5} \leq
          \bp{e_{1}\nnorm{u_{k}}+e_{2}\nnorm{u_{k+1}}+e_{3}+\nnorm{r_{k}}}
          \nnorm{u_{k+1}-u_{k}} \eqfinp
        \end{align*}
        By Inequality~\eqref{eq:co-uk}, we then get:
        \begin{align}
        \label{eq:stoch-pbsansc-ppa-maj-uk}
          \bnorm{u_{k+1}-u_{k}} \leq \frac{\varepsilon_{k}}{b}
          \bp{e_{1}\nnorm{u_{k}}+e_{2}\nnorm{u_{k+1}}+e_{3}+\nnorm{r_{k}}} \eqfinv
        \end{align}
        and therefore by a repeated use of~$(a+b)^2 \leq 2 (a^{2}+b^{2})$,
        \begin{align*}
          T_{5}
          & \leq \frac{4\varepsilon_{k}}{b}
            \bp{e_{1}^{2}\sqnorm{u_{k}}+e_{2}^{2}\sqnorm{u_{k+1}}+
            e_{3}^{2}+\sqnorm{r_{k}}} \eqfinp
        \end{align*}
        We bound~$\nnorm{u_{k}}$ (resp.~$\nnorm{u_{k+1}}$) by
        $\nnorm{u_{k}-v}+\nnorm{v}$ (resp. $\nnorm{u_{k+1}-v}+\nnorm{v}$) and use~\eqref{eq:stoch-pbsansc-ppa-lyap} to
        infer that there exist positive constants
        $\alpha$, $\beta$, $\gamma$, $\delta$ such that:
        \begin{align*}
          T_{5} \leq \varepsilon_{k}
          \bp{\alpha\ell_{v}(u_{k})+\beta\ell_{v}(u_{k+1})+
          \gamma+\delta\nnorm{r_{k}}^{2}} \eqfinp
        \end{align*}
      \end{itemize}
    Collecting the bounds for $T_{1}, T_{3}, T_{4}$ and $T_{5}$, we get the desired result.
\end{proof}

When no bias is present, $r_{k} = 0$, we retrieve the same inequality as
in~\cite[\S 2.5.1]{culioli_algorithmes_1987}. In the proofs of the subsequent
theorems, Inequality~\eqref{eq:bound_real} is fundamental to derive boundedness
properties or convergence results for the Lyapunov function $\ell_{v}$.

We give convergence results for the stochastic APP algorithm, in terms of
function values as well as for the iterates. The proof is similar to that
in~\cite{culioli_algorithmes_1987,culioli_decomposition/coordination_1990} (case
of a Hilbert space, no bias considered). The assumption that the Banach $\ban$
is reflexive~\ref{hyp:sep_banach} allows for a similar treatment as in the
Hilbert case. The additional contribution of the bias is already taken care of
by Inequality~\eqref{eq:bound_real}. {We denote by $J\opt$ the value of
  $\J$ on the non-empty set of solutions $U\opt$ of
  Problem~\eqref{eq:master_pb}.}

\begin{theorem}
  Under Assumptions~\textup{\ref{hyp:sep_banach}-\ref{hyp:mes_w}, \ref{hyp:jc_lsc}-\ref{hyp:quasi-integrable}}, we have the following statements:
  \begin{itemize}
    \item The sequence of random variables $\ba{\J(\va{U}_{k})}_{k \in \bbN}$ converges to $J\opt$ almost surely.
    \item The sequence of iterates $\ba{\va{U}_{k}}_{k \in \bbN}$ of the stochastic APP algorithm is almost surely bounded and every weak cluster point of a bounded realization of this sequence belongs to the optimal set $U\opt$.
  \end{itemize}
  \label{thm:cv}
\end{theorem}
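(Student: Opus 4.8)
The plan is to convert the pathwise estimate \eqref{eq:bound_real} of Lemma~\ref{lem:lyap_bd} into a conditional almost-supermartingale inequality and then invoke a variant of the Robbins--Siegmund theorem. First I would fix an optimal point $u\opt \in U\opt$ and apply Lemma~\ref{lem:lyap_bd} with $v = u\opt$, reading \eqref{eq:bound_real} as an inequality between the random variables $\va{U}_{k}, \va{U}_{k+1}, \va{R}_{k}$ and $\va{W}_{k+1}$ (it holds $\omega$ by $\omega$). I would then take the conditional expectation given $\tribu{F}_{k}$. Since $\va{W}_{k+1}$ is independent of $\tribu{F}_{k}$ and distributed as $\va{W}$, while $\va{U}_{k}$ is $\tribu{F}_{k}$-measurable, the last line integrates to $\varepsilon_{k}\bp{\J(u\opt) - \J(\va{U}_{k})} = -\varepsilon_{k}\bp{\J(\va{U}_{k}) - J\opt}$, which is nonpositive by optimality of $u\opt$. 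Because $\va{R}_{k}$ is $\tribu{F}_{k+1}$-measurable~\ref{hyp:err_sum}, the terms carrying $\nnorm{\va{R}_{k}}$ and $\sqnorm{\va{R}_{k}}$ become $\espe\bc{\nnorm{\va{R}_{k}} \mid \tribu{F}_{k}}$ and $\espe\bc{\sqnorm{\va{R}_{k}} \mid \tribu{F}_{k}}$, which are $\tribu{F}_{k}$-measurable. Finally I would move the term $\beta\varepsilon_{k}^{2}\,\espe\bc{\ell_{u\opt}(\va{U}_{k+1}) \mid \tribu{F}_{k}}$ to the left-hand side; this is licit for $k$ large, since $\varepsilon_{k} \to 0$ by~\ref{hyp:sig_seq} forces $1 - \beta\varepsilon_{k}^{2} > 0$ eventually, and dividing by $1 - \beta\varepsilon_{k}^{2} = 1 + O(\varepsilon_{k}^{2})$ only perturbs the coefficients at second order.

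This produces an inequality $\espe\bc{\ell_{u\opt}(\va{U}_{k+1}) \mid \tribu{F}_{k}} \leq (1 + a_{k})\,\ell_{u\opt}(\va{U}_{k}) - b_{k} + c_{k}$ with descent term $b_{k} = \varepsilon_{k}\bp{\J(\va{U}_{k}) - J\opt} \geq 0$, and with nonnegative $\tribu{F}_{k}$-measurable coefficients $a_{k} = O\bp{\varepsilon_{k}^{2} + \varepsilon_{k}\espe\bc{\nnorm{\va{R}_{k}} \mid \tribu{F}_{k}}}$ and $c_{k} = O\bp{\varepsilon_{k}^{2} + \varepsilon_{k}\espe\bc{\nnorm{\va{R}_{k}} \mid \tribu{F}_{k}} + \varepsilon_{k}^{2}\espe\bc{\sqnorm{\va{R}_{k}} \mid \tribu{F}_{k}}}$. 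I would then check $\sum_{k} a_{k} < +\infty$ and $\sum_{k} c_{k} < +\infty$ $\prbt$-a.s.: the $\varepsilon_{k}^{2}$ contributions are summable by~\ref{hyp:sig_seq}, the $\varepsilon_{k}\espe\bc{\nnorm{\va{R}_{k}} \mid \tribu{F}_{k}}$ contributions by~\ref{hyp:err_sum}, and the quadratic one by combining~\ref{hyp:sig_seq} with the $\prbt$-a.s. boundedness of $\ba{\va{R}_{k}}_{k}$ from~\ref{hyp:err_sum}. A variant of the Robbins--Siegmund theorem then yields, $\prbt$-a.s., that $\ell_{u\opt}(\va{U}_{k})$ converges to a finite limit and that $\sum_{k} \varepsilon_{k}\bp{\J(\va{U}_{k}) - J\opt} < +\infty$. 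Since $\ell_{u\opt}(\va{U}_{k}) \geq \tfrac{b}{2}\sqnorm{\va{U}_{k} - u\opt}$ by \eqref{eq:stoch-pbsansc-ppa-lyap}, convergence of $\ell_{u\opt}(\va{U}_{k})$ shows that $\ba{\va{U}_{k}}_{k}$ is $\prbt$-a.s. bounded (the first half of the second bullet); and since $\sum_{k}\varepsilon_{k} = +\infty$ by~\ref{hyp:sig_seq}, the summability of $\varepsilon_{k}\bp{\J(\va{U}_{k})-J\opt}$ forces $\liminf_{k} \J(\va{U}_{k}) = J\opt$ $\prbt$-a.s.

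The main obstacle is upgrading this $\liminf$ to the full a.s. convergence $\J(\va{U}_{k}) \to J\opt$ asserted in the first bullet. For this I would exploit that the function values vary slowly along the trajectory. Inequality~\eqref{eq:stoch-pbsansc-ppa-maj-uk}, together with the a.s. boundedness of $\ba{\va{U}_{k}}_{k}$ and of $\ba{\va{R}_{k}}_{k}$, gives $\nnorm{\va{U}_{k+1} - \va{U}_{k}} \leq C\varepsilon_{k}$ $\prbt$-a.s. for some random constant $C$; moreover Assumption~\ref{hyp:sglb} (via Proposition~\ref{prop:sglb-lip}) makes $\jc(\cdot, w)$ and $\jad(\cdot, w)$ Lipschitz on bounded sets uniformly in $w$, hence $\J$ Lipschitz on bounded sets, so $\babs{\J(\va{U}_{k+1}) - \J(\va{U}_{k})} \leq C'\varepsilon_{k}$ $\prbt$-a.s. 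A standard deterministic lemma then applies: if $a_{k} := \J(\va{U}_{k}) - J\opt \geq 0$ satisfies $\sum_{k}\varepsilon_{k} a_{k} < +\infty$, $\sum_{k}\varepsilon_{k} = +\infty$ and $\babs{a_{k+1} - a_{k}} \leq C'\varepsilon_{k}$, then $a_{k} \to 0$; indeed, were $\limsup_{k} a_{k} > 0$, the slow-variation bound would force whole blocks of indices on which $a_{k}$ stays bounded away from $0$ and whose cumulative $\varepsilon_{k}$-mass is bounded below, contradicting $\sum_{k}\varepsilon_{k} a_{k} < +\infty$. This delivers $\J(\va{U}_{k}) \to J\opt$ $\prbt$-a.s.

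Finally, for the remaining part of the second bullet, I would fix a realization in the full-measure event constructed above and let $\bar{u}$ be any weak cluster point of the bounded sequence $\ba{\va{U}_{k}}_{k}$, which exists since $\ban$ is reflexive~\ref{hyp:sep_banach}; say $\va{U}_{k_{j}} \rightharpoonup \bar{u}$. As $\Uad$ is closed and convex~\ref{hyp:uad}, it is weakly closed, so $\bar{u} \in \Uad$. Being convex and \lsc, $\J$ is weakly \lsc, whence $\J(\bar{u}) \leq \liminf_{j} \J(\va{U}_{k_{j}}) = J\opt$ by the full convergence just established; as $J\opt$ is the minimum, $\J(\bar{u}) = J\opt$, that is $\bar{u} \in U\opt$. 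Thus every weak cluster point is optimal, which completes the proof.
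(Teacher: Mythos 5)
Your proposal is correct and follows essentially the same route as the paper: conditional expectation of the Lyapunov inequality from Lemma~\ref{lem:lyap_bd}, Robbins--Siegmund to get a.s.\ convergence of $\ell_{u\opt}(\va{U}_{k})$ and summability of $\varepsilon_{k}\bp{\J(\va{U}_{k})-J\opt}$, then the slow-variation bound $\nnorm{\va{U}_{k+1}-\va{U}_{k}} = O(\varepsilon_{k})$ plus local Lipschitz continuity of $\J$ to upgrade the liminf to full convergence, and finally reflexivity with weak lower semicontinuity for the cluster points. The only cosmetic differences are that you absorb the implicit $\beta\varepsilon_{k}^{2}\,\espe\bc{\ell_{u\opt}(\va{U}_{k+1})\mid\tribu{F}_{k}}$ term by hand (which is exactly how the paper proves its Corollary~\ref{cor:robbins-siegmund}) and you sketch the deterministic ``$\liminf$ to $\lim$'' lemma inline rather than citing Proposition~\ref{prop:tech2}.
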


\begin{proof}
  Let $u\opt \in U\opt$ be a solution of Problem~\eqref{eq:master_pb}.

  \begin{enumerate}[wide]
  \item \textbf{Upper bound on the variation of the Lyapunov function.}
    We write the inequality of Lemma~\ref{lem:lyap_bd} at $v = u\opt$ in terms of random variables {and reorganize the terms
    as follows:
      \begin{multline}
        \label{eq:inequality-ps}
        \underbrace{ (1-\beta\varepsilon_{k}^{2}) \ell_{u\opt}(\va{u}_{k+1})}_{\va{A}_{k+1}} \leq
        \underbrace{\Bp{1+\alpha\varepsilon_{k}^{2}
            + \frac{2}{b}\varepsilon_{k}\nnorm{\va{r}_{k}}} \ell_{u\opt}(\va{u}_{k})}_{\va{B}_k}
        \\
        + \underbrace{\Bp{\gamma\varepsilon_{k}^{2}+\varepsilon_{k}\nnorm{\va{r}_{k}}
          +\delta\np{\varepsilon_{k}\nnorm{\va{r}_{k}}}^{2}}}_{\va{C}_k} \\
      - \underbrace{\varepsilon_{k} \bp{(\jc+\jad)(\va{u}_{k},\va{w}_{k+1})-(\jc+\jad)(u\opt,\va{w}_{k+1})}}_{\va{D}_k} \eqfinv
    \end{multline}
      this last inequality being valid $\Pas$}.  { We assume without loss
      of generality that $(1-\beta\varepsilon_{k}^{2}) >0$ as it is true for $k$
      large enough.  As $\va{D}_k$ takes only finite values since $\jc+\jad$
      takes finite values, we obtain from Equation~\eqref{eq:inequality-ps} that
      almost surely $\va{A}_{k+1} + \va{D}_k \le \va{B}_k + \va{C}_k$.  It is
      classical to define extended conditional expectation for nonnegative
      random variables or more generally for $\trib$-quasi-integrable random
      variables~\cite[p. 339]{thibault}. Thus the (extended) conditional
      expectation with respect to $\tribu{F}_{k}$ is well defined for each of
      the three terms $\va{A}_{k+1}$, $\va{B}_{k}$, $\va{C}_{k}$.  Moreover, as
      $\va{B}_k$ and $\va{C}_k$ are both nonnegative, we have that
      $\bespc{\va{B}_k+\va{C}_k}{\tribu{F}_{k}}=
      \bespc{\va{B}_k}{\tribu{F}_{k}}+ \bespc{\va{C}_k}{\tribu{F}_{k}}$.  }
    We now prove that the conditional expectation of $\va{D}_k$ with
      respect to $\tribu{F}_{k}$ exists and satisfies
      $\bespc{ \va{D}_k}{\tribu{F}_{k}} =\varepsilon_{k} ( \J(\va{u}_k)-
      \J(u\opt))$.  For that purpose, we consider the integrand
      $\hkintegrand: \ban \times \Omega \rightarrow \bbR$ defined by
      $\hkintegrand(u,\omega)=(\jc+\jad)(u,\va{W}_{k+1}(\omega))$ for all
      $(u,\omega)\in \ban{\times}\Omega$ and recall some results
      from~\cite{thibault}. The integrand $\hkintegrand$ is
      $\trib$-quasi-integrable (by Assumption~\ref{hyp:quasi-integrable}) and
      \lsc\ (using the assumptions on $\jc$ and $\jad$). Therefore, there
      exists an $\trib$-quasi-integrable integrand $\hkintegrand^{\tribu{F}_k}$
      which gives the conditional expectation with respect to $\tribu{F}_{k}$ of
      the integrand $\hkintegrand$, that is
      $\hkintegrand^{\tribu{F}_k}(u,\omega) =
      \bespc{\hkintegrand(u,\cdot)}{\tribu{F}_{k}}(\omega)$ for all
      $(u,\omega)\in \ban{\times}\Omega$~\cite[Proposition~12]{thibault}.
      {We have}
    \begin{align*}
      \hkintegrand^{\tribu{F}_k}(u,\omega)
      &= \bespc{\hkintegrand(u,\cdot)}{\tribu{F}_{k}}(\omega)
        = \bespc{(\jc+\jad)(u,\va{W}_{k+1})}{\tribu{F}_{k}}(\omega)
      \eqfinv \\
      \intertext{{where the conditional expectation is an expectation since
      $\va{W}_{k+1}$ is independent of the $\sigma$-field $\tribu{F}_k$,
      so that}}
      \hkintegrand^{\tribu{F}_k}(u,\omega)
      &= \besp{(\jc+\jad)(u,\va{W}_{k+1})} = \J(u)
        \eqfinv
    \end{align*}
    {that is, $\hkintegrand^{\tribu{F}_k}$ does not depend on~$\omega$.}
    Moreover, given any $\tribu{F}_k$-measurable random variable
    $\va{Y}$ we have~\cite[Proposition~13]{thibault} that
    \begin{equation*}
      \hkintegrand^{\tribu{F}_k}(\va{Y}(\omega),\omega)= \bespc{\hkintegrand(\va{Y}(\cdot),\cdot)}{\tribu{F}_{k}}(\omega)
      \eqfinv
    \end{equation*}
    which, using the fact that $\va{u}_{k}$ is
    $\tribu{F}_k$-measurable, gives for all $\omega\in \Omega$ that
    \begin{align*}
      \varepsilon_{k}\bp{   \J(\va{u}_k(\omega))- \J(u\opt)}
      &=
        \varepsilon_{k}  \bp{ \hkintegrand^{\tribu{F}_k}(\va{U}_k(\omega),\omega) -  \hkintegrand^{\tribu{F}_k}(u\opt,\omega)}
      \\
      & =\varepsilon_{k}
        \bp{ \bespc{\hkintegrand(\va{U}_k(\cdot),\cdot)}{\tribu{F}_{k}}(\omega)
        - \bespc{\hkintegrand(u\opt,\cdot)}{\tribu{F}_{k}}(\omega)}
      \\
      & =\varepsilon_{k}
        \bespc{\hkintegrand(\va{U}_k(\cdot),\cdot)
        - \hkintegrand(u\opt,\cdot)}{\tribu{F}_{k}}(\omega)
        \tag{as $\bespc{\hkintegrand(u\opt,\cdot)}{\tribu{F}_{k}}=\J(u\opt)$ is finite}
      \\
      & =
        \varepsilon_{k} \bespc{  (\jc+\jad)(\va{u}_{k},\va{w}_{k+1})-(\jc+\jad)(u\opt,\va{w}_{k+1})}{\tribu{F}_{k}}(\omega)
      \\
      & = \bespc{ \va{D}_k}{\tribu{F}_{k}}(\omega)
        \nonumber \eqfinp
    \end{align*}
    Thus, the conditional expectations with respect to $\tribu{F}_{k}$ of $\va{A}_{k+1}$, $\va{B}_k$, $\va{C}_k$ and $\va{D}_k$
    are properly defined.
    To conclude it remains to show that $\bespc{\va{A}_{k+1}+\va{D}_k}{\tribu{F}_{k}}=
    \bespc{\va{A}_{k+1}}{\tribu{F}_{k}} + \bespc{\va{D}_k}{\tribu{F}_{k}}$ which is the case as
    $\bespc{\va{D}_k}{\tribu{F}_{k}}=\varepsilon_{k}\bp{   \J(\va{u}_k)- \J(u\opt)}$ is almost surely
    finite and nonnegative.
    Then we can take the conditional expectation on both sides of $\va{A}_{k+1} + \va{D}_k \le \va{B}_k + \va{C}_k$ with respect to the
    $\sigma$-field $\tribu{F}_{k}$ and use the just proved additivity properties to obtain
    \begin{equation}
      \label{eq:stoch-pbsansc-ppa-maj-psik-step}
      (1-\va{\beta}_{k}) \bespc{\ell_{u\opt}\bp{\va{u}_{k+1}}}{\tribu{F}_{k}}
      + \varepsilon_{k} \bp{\J(\va{u}_{k}) - \J(u\opt)}
      \leq
      \np{1+\va{\alpha}_{k}} \ell_{u\opt}\bp{\va{u}_{k}}
      + \va{\gamma}_{k}
      \eqfinv
    \end{equation}
    where we have:
    \begin{align*}
      \va{\alpha}_{k}
      & = \alpha\varepsilon_{k}^{2} +
        \frac{2}{b}\varepsilon_{k}\bespc{\nnorm{\va{r}_{k}}}{\tribu{F}_{k}} \eqfinv \\
      \va{\beta}_{k}
      & = \beta\varepsilon_{k}^{2} \eqfinv \\
      \va{\gamma}_{k}
      & = \gamma\varepsilon_{k}^{2} +
        \varepsilon_{k}\bespc{\nnorm{\va{r}_{k}}}{\tribu{F}_{k}} +
        \delta\vardelim{\varepsilon_{k}\bespc{\nnorm{\va{r}_{k}}}{\tribu{F}_{k}}}^{2}
        \eqfinv
    \end{align*}
    and where we have used that $\va{U}_{k}$ is
    $\tribu{F}_{k}$-measurable, to obtain that
    $\bespc{\ell_{u\opt}\bp{\va{u}_{k}}}{\tribu{F}_{k}}=\ell_{u\opt}\bp{\va{u}_{k}}$.
  By Assumptions~\ref{hyp:sig_seq} and~\ref{hyp:err_sum},
  $\va{\alpha}_{k}, \va{\beta}_{k}$ and $\va{\gamma}_{k}$ are the terms of
  convergent series. Recall that $\J(\va{u}_{k})-\J(u\opt)$ is almost surely
  nonnegative as $u\opt$ is solution of~\eqref{eq:master_pb}. The right hand side of
  Equation~\eqref{eq:stoch-pbsansc-ppa-maj-psik-step} is almost surely finite and since the left hand side is
  the sum of two positive terms, each of them is almost surely finite. Thus we also have almost surely that
  \begin{multline}
    \label{eq:stoch-pbsansc-ppa-maj-psik}
    \bespc{\ell_{u\opt}\bp{\va{u}_{k+1}}}{\tribu{F}_{k}}
    \leq
    \np{1+\va{\alpha}_{k}} \ell_{u\opt}\bp{\va{u}_{k}}
    + \va{\beta}_{k}\bespc{\ell_{u\opt}\bp{\va{u}_{k+1}}}{\tribu{F}_{k}} \\
    + \va{\gamma}_{k}
    - \varepsilon_{k} \bp{\J(\va{u}_{k}) - \J(u\opt)}
    \eqfinp
  \end{multline}
  \item \textbf{Convergence analysis.} \label{cv_lyap} Applying Corollary~\ref{cor:robbins-siegmund} of Robbins-Siegmund theorem, we get that the sequence of random variables $\ba{\ell_{u\opt}\bp{\va{u}_{k}}}_{k\in\mathbb{N}}$ converges $\Pas$ to a finite random variable~$\va{\ell}_{u\opt}^{\infty}$
    and we have:
    \begin{align}
      \label{eq:stoch-pbsansc-ppa-sum}
      \sum_{k=0}^{+\infty} \varepsilon_{k}
      \bp{\J(\va{u}_{k})-\J(u\opt)} < + \infty \;\; \Pas \eqfinp
    \end{align}
  \item \label{last-item} \textbf{Limits of sequences.} The sequence
    $\ba{\ell_{u\opt}( \va{u}_{k})}_{k \in \bbN}$ is $\Pas$ bounded, so
    by~\eqref{eq:stoch-pbsansc-ppa-lyap}, we get that the sequence
    $\ba{\va{U}_{k}}_{k \in \bbN}$ is also $\Pas$
    bounded. Assumption~\ref{hyp:sglb} then implies that the sequence
    $\ba{\va{G}_{k}}_{k \in \bbN}$ is also $\Pas$ bounded. Finally, as the
    sequence $\ba{\va{R}_{k}}_{k \in \bbN}$ is assumed to be $\Pas$
    bounded~\ref{hyp:err_sum}, we deduce
    from~\eqref{eq:stoch-pbsansc-ppa-maj-uk} that the sequence
    $\ba{\nnorm{\va{u}_{k+1}-\va{u}_{k}}/\varepsilon_{k}}_{k\in\bbN}$ is also
    $\Pas$ bounded. This last property ensures that Assumption~(c) of
    Proposition~\ref{prop:tech2}, is satisfied. Assumption~(b) of
    Proposition~\ref{prop:tech2} is exactly~\eqref{eq:stoch-pbsansc-ppa-sum} and
    Assumption~(a) is satisfied as we have~\ref{hyp:sig_seq}. On a bounded set
    containing the sequence $\ba{\va{U}_{k}}_{k \in \bbN}$, for instance the
    convex hull of this sequence, the function $\J$ is Lipschitz continuous by
    Proposition~\ref{prop:sglb-lip}. This ensures the continuity assumption
    required to apply Proposition~\ref{prop:tech2}. We conclude that
    $\ba{\J(\va{U}_{k})}_{k \in \bbN}$ converges almost surely to
    $\J(u\opt) = J\opt$, the optimal value of Problem~\eqref{eq:master_pb}.

    Let $\omeg_{0}$ be the negligible subset of $\omeg$ on which the sequence
    $\ba{\ell_{u\opt}\bp{\va{u}_{k}}}_{k \in \bbN}$ is unbounded and $\omeg_{1}$
    be the negligible subset of $\omeg$ on which the
    relation~\eqref{eq:stoch-pbsansc-ppa-sum} is not satisfied. We have
    $\prbt \bp{\omeg_{0}\cup\omeg_{1}}=0$. Let
    $\omega \notin \omeg_{0} \cup \omeg_{1}$. The sequence
    $\na{u_{k}}_{k \in \bbN}$ associated to this element $\omega$ is bounded and
    each $u_{k}$ is in $\Uad$, a closed subset of $\ban$.  As $\ban$ is
    reflexive~\ref{hyp:sep_banach}, there exists a weakly converging subsequence
    $\{u_{\xi(k)}\}_{k \in \bbN}$. Note that $\na{\xi(k)}_{k \in \bbN}$ depends
    on $\omega$.  Let $\overline{u}$ be the weak limit of the sequence
    $\na{u_{\xi(k)}}_{k \in \bbN}$. The function $\J$ is \lsc\ and convex, it
    is then weakly \lsc\ by~\cite[Corollary 2.2]{ekeland_convex_1976}. Thus we
    have:
    \begin{align*}
      \J(\overline{u}) \leq
      \liminf_{k \rightarrow +\infty} \J(u_{\xi(k)}) = \J(u\opt) \eqfinp
    \end{align*}
  \end{enumerate}
  We conclude that $\overline{u} \in U\opt$.
\end{proof}

When the differential of $K$ is weakly continuous, we can prove stronger
convergence results for the sequence of iterates of the stochastic APP
algorithm. These results already appear in~\cite{culioli_algorithmes_1987} and
remain valid for our more general version of the algorithm.

\begin{theorem}
  Consider again~\textup{\ref{hyp:sep_banach}-\ref{hyp:mes_w},
    \ref{hyp:jc_lsc}-\ref{hyp:err_sum}} and suppose that the differential of $K$
  is weakly continuous. Then, the sequence of iterates $\ba{\va{U}_{k}}$
  converges weakly $\Pas$ to a single element of $U\opt$. If moreover, the
  function $\Jc$ is strongly convex, then, the sequence of iterates
  $\ba{\va{U}_{k}}$ converges strongly $\Pas$ to the unique solution $u\opt$ of
  Problem~\eqref{eq:master_pb}.
\end{theorem}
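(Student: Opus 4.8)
The plan is to upgrade the conclusion of Theorem~\ref{thm:cv}---that the $\Pas$ bounded sequence $\ba{\va{U}_{k}}$ has all its weak cluster points in $U\opt$---to \emph{uniqueness} of the weak cluster point, via an Opial-type argument phrased through the Lyapunov (Bregman) function $\ell_{v}$ and the weak continuity of $\nabla K$. First I would note that the Robbins--Siegmund step in the proof of Theorem~\ref{thm:cv} applies verbatim to $\ell_{v}(\va{U}_{k})$ for \emph{any} fixed $v \in U\opt$, since $\J(v)=J\opt$ makes $\J(\va{U}_{k})-\J(v)\geq 0$; this yields $\ell_{v}(\va{U}_{k}) \to \ell_{v}^{\infty}$ $\Pas$. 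Because the exceptional null set a priori depends on $v$, I would fix a countable dense subset $\ba{v_{n}}_{n\in\bbN}$ of $U\opt$ (it exists since $\ban$ is separable by~\ref{hyp:sep_banach}, hence so is the subset $U\opt$) and intersect the countably many full-measure events with the full-measure event from Theorem~\ref{thm:cv} on which $\ba{\va{U}_{k}}$ is bounded and every weak cluster point lies in $U\opt$.

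Working pathwise on this full-measure event, the key identity is $\ell_{v_{n}}(\va{U}_{k}) - \ell_{v_{m}}(\va{U}_{k}) = K(v_{n}) - K(v_{m}) - \proscal{\nabla K(\va{U}_{k})}{v_{n}-v_{m}}$, so the scalar sequence $\proscal{\nabla K(\va{U}_{k})}{v_{n}-v_{m}}$ converges for every $n,m$. Given two weak cluster points $\bar{u}_{1},\bar{u}_{2}\in U\opt$ with $\va{U}_{\xi(k)}\rightharpoonup\bar{u}_{1}$ and $\va{U}_{\zeta(k)}\rightharpoonup\bar{u}_{2}$, the weak continuity of $\nabla K$ gives $\proscal{\nabla K(\va{U}_{\xi(k)})}{v_{n}-v_{m}}\to\proscal{\nabla K(\bar{u}_{1})}{v_{n}-v_{m}}$ and likewise along $\zeta(k)$ for $\bar{u}_{2}$; since the full sequence has a single limit, I would deduce $\proscal{\nabla K(\bar{u}_{1})-\nabla K(\bar{u}_{2})}{v_{n}-v_{m}}=0$ for all $n,m$. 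Approximating $\bar{u}_{1},\bar{u}_{2}$ strongly by elements of $\ba{v_{n}}$ and passing to the limit (the functional $\nabla K(\bar{u}_{1})-\nabla K(\bar{u}_{2})$ being fixed) gives $\proscal{\nabla K(\bar{u}_{1})-\nabla K(\bar{u}_{2})}{\bar{u}_{1}-\bar{u}_{2}}=0$. The $b$-strong convexity of $K$~\ref{hyp:aux_func} makes $\nabla K$ strongly monotone, so $b\sqnorm{\bar{u}_{1}-\bar{u}_{2}}\leq 0$, i.e.\ $\bar{u}_{1}=\bar{u}_{2}$. A bounded sequence in a reflexive space all of whose weakly convergent subsequences share the limit $\bar{u}$ converges weakly to $\bar{u}$ (test against any fixed $f\in\ban\dual$ and extract a contradicting sub-subsequence), which proves the first claim.

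For the strong-convergence claim I would not use the Bregman machinery but only the convergence $\J(\va{U}_{k})\to J\opt$ $\Pas$ from Theorem~\ref{thm:cv}. If $\Jc$ is strongly convex with modulus $\mu>0$, then $\J=\Jc+\Jad$ is $\mu$-strongly convex (as $\Jad$ is convex), so its minimizer $u\opt$ over the convex set $\Uad$ is unique. A direct convexity argument along the segment $u\opt+t(u-u\opt)\in\Uad$, using optimality of $u\opt$ and letting $t\to 0^{+}$, yields the quadratic growth bound $\J(u)-J\opt\geq\tfrac{\mu}{2}\sqnorm{u-u\opt}$ for every $u\in\Uad$. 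Applying this to $u=\va{U}_{k}$ and using $\J(\va{U}_{k})\to J\opt$ forces $\sqnorm{\va{U}_{k}-u\opt}\to 0$ $\Pas$, i.e.\ strong convergence to $u\opt$.

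I expect the main obstacle to be the first part: correctly handling the $v$-dependence of the almost-sure sets (resolved by the countable dense subset of $U\opt$) and combining the weak continuity of $\nabla K$ with strong monotonicity to separate two candidate limits. This is the Banach/stochastic adaptation of Opial's lemma, and it is where reflexivity and separability~\ref{hyp:sep_banach} together with the strong convexity of $K$~\ref{hyp:aux_func} all enter at once; by contrast the strong-convergence statement is a short consequence of quadratic growth and the already-established convergence of function values.
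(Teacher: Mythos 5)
Your proof is correct and follows essentially the same strategy as the paper's: for weak convergence, an Opial-type argument built on the almost-sure convergence of the Lyapunov function $\ell_{v}(\va{U}_{k})$ for $v \in U\opt$, the weak continuity of $\nabla K$, and the strong monotonicity of $\nabla K$ coming from~\ref{hyp:aux_func}; for strong convergence, a quadratic growth bound combined with $\J(\va{U}_{k}) \to J\opt$ $\Pas$ from Theorem~\ref{thm:cv}. The differences are in execution and are worth noting. The paper applies the Lyapunov convergence directly with $v$ equal to the two weak cluster points $\overline{u}_{\xi}, \overline{u}_{\psi}$ and adds the two strong-convexity inequalities $\ell_{\overline{u}_{\psi}}-\ell_{\overline{u}_{\xi}} \geq \frac{b}{2}\sqnorm{\overline{u}_{\xi}-\overline{u}_{\psi}}$ and its symmetric counterpart; since those cluster points are $\omega$-dependent, this implicitly invokes the $\Pas$ convergence of $\ell_{v}(\va{U}_{k})$ for a \emph{random} $v$, a point the paper does not address. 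Your countable-dense-subset device (exploiting separability of $U\opt \subset \ban$) together with the difference identity $\ell_{v_{n}}(\va{U}_{k})-\ell_{v_{m}}(\va{U}_{k}) = K(v_{n})-K(v_{m})-\proscal{\nabla K(\va{U}_{k})}{v_{n}-v_{m}}$ resolves this cleanly and reaches the same conclusion $\proscal{\nabla K(\overline{u}_{1})-\nabla K(\overline{u}_{2})}{\overline{u}_{1}-\overline{u}_{2}}=0$, hence $\overline{u}_{1}=\overline{u}_{2}$ by strong monotonicity --- exactly the inequality the paper obtains by summing its two bounds. For the second claim, the paper derives the growth bound $\J(u)-J\opt \geq \frac{a}{2}\sqnorm{u-u\opt}$ from the variational inequality at $u\opt$ and the strong convexity of $\Jc$, whereas you obtain it by the elementary segment argument letting $t \to 0^{+}$; both are valid, and yours avoids exhibiting a subgradient of $\Jc$ at $u\opt$.
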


\begin{proof}
  Consider the case where the differential of $K$ is weakly continuous. Let
  $\na{u_{k}}_{k \in \bbN}$ be a sequence generated by the algorithm. Suppose
  that there exist two subsequences~$\{u_{\xi(k)}\}_{k\in\bbN}$
  and~$\{u_{\psi(k)}\}_{k\in\bbN}$ converging weakly respectively to two
  solutions~$\overline{u}_{\xi}$ and~$\overline{u}_{\psi}$ of the problem,
  with~$\overline{u}_{\xi} \neq \overline{u}_{\psi}$. Then we have:
  \begin{multline}
    K(\overline{u}_{\psi})-K(u_{\xi(k)})-
    \bscal{\nabla K(u_{\xi(k)})}{\overline{u}_{\psi}-u_{\xi(k)}} =
    K(\overline{u}_{\psi})-K(\overline{u}_{\xi})
    -\bscal{\nabla K(u_{\xi(k)})}{\overline{u}_{\psi}-\overline{u}_{\xi}} \\
    + \bp{K(\overline{u}_{\xi})-K(u_{\xi(k)})-
          \bscal{\nabla K(u_{\xi(k)})}{\overline{u}_{\xi}-u_{\xi(k)}}} \eqfinp
  \end{multline}
  By the point~\ref{cv_lyap} of the proof of Theorem~\ref{thm:cv},
  \begin{align*}
    & \lim_{k\rightarrow+\infty} K(\overline{u}_{\psi})-K(u_{\xi(k)})-
      \bscal{\nabla K(u_{\xi(k)})}{\overline{u}_{\psi}-u_{\xi(k)}} =
      \lim_{k\rightarrow+\infty} \ell_{\overline{u}_{\psi}}(u_{k}) =
      \ell_{\overline{u}_{\psi}} \eqfinv \\
    & \lim_{k\rightarrow+\infty} K(\overline{u}_{\xi})-K(u_{\xi(k)})-
      \bscal{\nabla K(u_{\xi(k)})}{\overline{u}_{\xi}-u_{\xi(k)}} =
      \lim_{k\rightarrow+\infty} \ell_{\overline{u}_{\xi}}(u_{k}) =
      \ell_{\overline{u}_{\xi}} \eqfinv
  \end{align*}
  therefore by weak continuity of $\nabla K$ and strong convexity of~$K$, we get:
  \begin{align*}
    \ell_{\overline{u}_{\psi}} - \ell_{\overline{u}_{\xi}}
     & = K(\overline{u}_{\psi})-K(\overline{u}_{\xi}) -
        \bscal{\nabla K(\overline{u}_{\xi})}{\overline{u}_{\psi}-\overline{u}_{\xi}}
      \geq \frac{b}{2}\nnorm{\overline{u}_{\xi}-\overline{u}_{\psi}}^{2} \eqfinp
  \end{align*}
  Inverting the roles of $\overline{u}_{\psi}$ and~$\overline{u}_{\xi}$, by a similar calculation as previously we get:
  \begin{align*}
    \ell_{\overline{u}_{\xi}} - \ell_{\overline{u}_{\psi}}
    \geq \frac{b}{2}\nnorm{\overline{u}_{\xi}-\overline{u}_{\psi}}^{2} \eqfinv
  \end{align*}
  We then deduce that ~$\overline{u}_{\xi}=\overline{u}_{\psi}$, which
  contradicts the initial assumption. We conclude that all weakly converging
  subsequences of the sequence $\na{u_{k}}$ converge to the same limit, hence we
  have the weak convergence of the whole sequence $\na{u_{k}}$ to a single
  element of $U\opt$.

  Now consider the case where $\Jc$ is strongly convex, with constant $a$. Then,
  Problem~\eqref{eq:master_pb} admits a unique solution $u\opt$ which is
  characterized by the following variational inequality:
  \begin{align*}
    \exists r\opt \in \partial \Jc(u\opt) \eqsepv \forall u \in \Uad \eqsepv
    \bscal{r\opt}{u-u\opt} + \Jad(u) - \Jad(u\opt) \geq 0 \eqfinp
  \end{align*}
  The strong convexity assumption on $\Jc$ yields:
  \begin{align*}
    \J(\va{u}_{k}) - \J(u\opt)
    & \geq \bscal{r\opt}{\va{u}_{k}-u\opt}
           + \frac{a}{2} \nnorm{\va{u}_{k}-u\opt}^{2}
            + \Jad(\va{u}_{k}) - \Jad(u\opt)
    \geq \frac{a}{2} \nnorm{\va{u}_{k}-u\opt}^{2} \eqfinp
  \end{align*}
  As~$\ba{\J(\va{u}_{k})}_{k \in \bbN}$ converges almost surely to~$\J(u\opt)$,
  we get that~$\nnorm{\va{u}_{k}-u\opt}$ converges to zero. Thus, we have the
  strong convergence of the sequence~$\ba{\va{U}_{k}}_{k \in \bbN}$ to the
  unique solution $u\opt$ of the problem.
\end{proof}

\subsection{Efficiency estimates}
\label{subsec:eff_est}

In this section, we derive efficiency estimates for the convergence of the
expectation of function values. In Theorem~\ref{thm:cv_rate_expec_avrg}, we
consider the expected function value taken for the averaged iterates following
the technique of
Polyak-Ruppert~\cite{polyak_acceleration_1992,ruppert_efficient_1988}. We take a
step size $\varepsilon_{k}$ of the order $\mathcal{O}\vardelim{k^{-\theta}}$
with $1/2 < \theta < 1$, ensuring the convergence of the algorithm, and leading
to a better convergence rate than with a small step size
$\varepsilon_{k} = \mathcal{O}\vardelim{k^{-1}}$. The efficiency estimate is
obtained using a similar technique as in~\cite{nemirovski_robust_2009} but
without requiring the boundedness of $\Uad$. Moreover, we are able to take into
account the bias on the gradient with the following assumption, inspired
from~\cite{geiersbach_stochastic_2020-1}:
\begin{enumerate}[resume, label=(A\arabic*)]
\item For $k \in \bbN$, let
  $Q_{k} = \mathrm{ess}\sup_{\omega \in \omeg} \norm{\va{R}_{k}(\omega)}$ be the
  essential supremum of $\norm{\va{R}_{k}}$ and assume that:
  \begin{align*}
    \sum_{k \in \bbN} Q_{k}\varepsilon_{k} < \infty \eqfinp
  \end{align*}
  \label{hyp:bound_bias}
\end{enumerate}

We start by a lemma that proves the boundedness of the expectation of the
Lyapunov function. This result will be used multiple times in this part.
\begin{lemma}
  \label{lem:bound_expec_lyap}
  Under Assumptions~\textup{\ref{hyp:sep_banach}-\ref{hyp:mes_w},
    \ref{hyp:jc_lsc}-\ref{hyp:bound_bias}}, the sequence of expectations of the
  Lyapunov function $\ba{\expec{\ell_{u\opt}\bp{\va{u}_{k}}}}_{k \in \bbN}$
  {is bounded and the sequence $\ba{\expec{\J(\va{u}_{k})}}_{k\in \bbN}$
  takes finite values.}
\end{lemma}

\begin{proof}
  {From Inequality~\eqref{eq:stoch-pbsansc-ppa-maj-psik}, using the fact
    that $\norm{\va{R}_{k}}\leq Q_{k}$ almost surely and using the fact that $\J(\va{u}_{k}) - \J(u\opt)$ is nonnegative,
    we obtain
    \begin{equation}
      \label{eq:stoch-pbsansc-ppa-maj-psik-determ-const}
      \bespc{\ell_{u\opt}\bp{\va{u}_{k+1}}}{\tribu{F}_{k}} \leq
      \np{1+\alpha_{k}} \ell_{u\opt}\bp{\va{u}_{k}}
      + \beta_{k}\bespc{\ell_{u\opt}\bp{\va{u}_{k+1}}}{\tribu{F}_{k}}
      + \gamma_{k}
      \eqfinv
    \end{equation}}
  where
  \begin{equation}
    \label{def:coef_det}
    \alpha_{k} = \alpha\varepsilon_{k}^{2} + \frac{2}{b}\varepsilon_{k}Q_{k} \eqsepv
    \beta_{k}  = \beta\varepsilon_{k}^{2} \eqsepv
    \gamma_{k} = (\gamma + \delta Q_{k}^{2})\varepsilon_{k}^{2} + Q_{k}\varepsilon_{k}
    \eqfinp
  \end{equation}
  Then, {taking the extended expectation (all random variables are nonnegative)} on both sides of Inequality~\eqref{eq:stoch-pbsansc-ppa-maj-psik-determ-const} yields:
  \begin{equation*}
    \expec{\ell_{u\opt}\bp{\va{u}_{k+1}}} \leq
    \np{1+\alpha_{k}} \expec{\ell_{u\opt}\bp{\va{u}_{k}}}
    + \beta_{k}\expec{\ell_{u\opt}\bp{\va{u}_{k+1}}}
    + \gamma_{k}\eqfinp
  \end{equation*}
  From~\ref{hyp:sig_seq} and~\ref{hyp:bound_bias}, $\alpha_{k},\ \beta_{k}$ and
  $\gamma_{k}$ are the terms of convergent series. Using a deterministic version
  of Corollary~\ref{cor:robbins-siegmund}, we get that the sequence
  $\ba{\expec{\ell_{u\opt}\bp{\va{u}_{k}}}}_{k \in \bbN}$ converges and is
  bounded.

  {
    Now, we can use again Inequality~\eqref{eq:stoch-pbsansc-ppa-maj-psik}
    and the fact that, from the previous point, $\expec{\ell_{u\opt}\bp{\va{u}_{k+1}}}$ is finite
    to obtain that
    \begin{equation}
      \varepsilon_{k} \expec{\J(\va{u}_{k}) - \J(u\opt)}
      \leq
      \np{1+\alpha_{k}} \expec{\ell_{u\opt}\bp{\va{u}_{k}}}
      + \np{\beta_{k} -1}\expec{\ell_{u\opt}\bp{\va{u}_{k+1}}}
      + \gamma_{k}
      \eqfinv\label{eq:maj_j_uk}
    \end{equation}
    from which we obtain
    {in particular that $\expec{\J(\va{u}_{k}) - \J(u\opt)}$ is finite.}
  }
\end{proof}

\begin{theorem}
  Suppose that Assumptions~\textup{\ref{hyp:sep_banach}-\ref{hyp:mes_w},
    \ref{hyp:jc_lsc}-\ref{hyp:bound_bias}} are satisfied. Let $n \in \bbN$ and
  let $\ba{\va{u}_{k}}_{k\in \bbN}$ be the sequence of iterates of the
  stochastic APP algorithm. Define the averaged iterate as:
  \begin{align*}
    \widetilde{\va{U}}_{i}^{n} = \sum_{k=i}^{n} \eta^{i}_{k} \va{U}_{k}
    \quad
    \text{with}
    \quad
    \eta^{i}_{k} = \frac{\varepsilon_{k}}{\sum_{l=i}^{n}\varepsilon_{l}}
     \eqfinp
  \end{align*}
  Suppose that for all $k \in \bbN, \ \varepsilon_{k} = ck^{-\theta}$ with
  $1/2 < \theta < 1$ and a constant $c > 0$. Then for any minimizer $u\opt$ of
  $J$, we have:
  \begin{align*}
    \expec{\J\left(\widetilde{\va{U}}_{1}^{n}\right) - J\left(u\opt\right)} = \mathcal{O}\vardelim{n^{\theta-1}} \eqfinp
  \end{align*}
  In particular, the rate of convergence can be arbitrarily close to the order
  $n^{-1/2}$ if $\theta$ is chosen to be arbitrarily close to $1/2$.
  \label{thm:cv_rate_expec_avrg}
\end{theorem}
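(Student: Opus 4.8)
The plan is to follow the Polyak--Ruppert averaging strategy in the spirit of~\cite{nemirovski_robust_2009}, exploiting the Lyapunov inequality of Lemma~\ref{lem:lyap_bd} together with the boundedness already secured in Lemma~\ref{lem:bound_expec_lyap}. The starting point is Inequality~\eqref{eq:recur_esp_lyap}, obtained by taking full expectation in Lemma~\ref{lem:lyap_bd} with $v = u\opt$ and using $\norm{r_{k}} \leq Q_{k}$; with $\alpha_{k}, \beta_{k}, \gamma_{k}$ as in Lemma~\ref{lem:bound_expec_lyap} it isolates the optimality gap $\jconv{k}$, which is nonnegative because $u\opt$ minimizes $\J$. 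The first step is to rearrange this recursion into the telescoping-friendly form
\[
  \varepsilon_{k}\jconv{k} \leq \expec{\ell_{u\opt}\vardelim{\va{u}_{k}}} - \expec{\ell_{u\opt}\vardelim{\va{u}_{k+1}}} + \alpha_{k}\expec{\ell_{u\opt}\vardelim{\va{u}_{k}}} + \beta_{k}\expec{\ell_{u\opt}\vardelim{\va{u}_{k+1}}} + \gamma_{k}\eqfinp
\]

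The second step is to sum this inequality for $k$ from $1$ to $n$. The leading differences telescope to $\expec{\ell_{u\opt}\vardelim{\va{u}_{1}}} - \expec{\ell_{u\opt}\vardelim{\va{u}_{n+1}}}$; the subtracted term is nonnegative by~\eqref{eq:stoch-pbsansc-ppa-lyap} and is dropped, while the first is a finite constant. For the three remaining sums I would invoke Lemma~\ref{lem:bound_expec_lyap}: since $M := \sup_{k} \expec{\ell_{u\opt}\vardelim{\va{u}_{k}}} < \infty$, one has $\sum_{k=1}^{n}(\alpha_{k}+\beta_{k})\expec{\ell_{u\opt}\vardelim{\cdot}} \leq M\sum_{k\in\bbN}(\alpha_{k}+\beta_{k})$, and together with $\sum_{k\in\bbN}\gamma_{k}$ these are finite by Assumptions~\ref{hyp:sig_seq} and~\ref{hyp:bound_bias}. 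Hence there is a constant $C$, independent of $n$, with $\sum_{k=1}^{n}\varepsilon_{k}\jconv{k} \leq C$.

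The third step converts this weighted sum of gaps into a statement about the averaged iterate. By convexity of $\J$ and the fact that the weights $\eta_{k}^{1} = \varepsilon_{k}/\sum_{l=1}^{n}\varepsilon_{l}$ sum to one, Jensen's inequality gives $\J(\widetilde{\va{U}}_{1}^{n}) \leq \sum_{k=1}^{n}\eta_{k}^{1}\J(\va{U}_{k})$, whence
\[
  \expec{\J\vardelim{\widetilde{\va{U}}_{1}^{n}} - \J\vardelim{u\opt}} \leq \sum_{k=1}^{n}\eta_{k}^{1}\jconv{k} = \frac{\sum_{k=1}^{n}\varepsilon_{k}\jconv{k}}{\sum_{l=1}^{n}\varepsilon_{l}} \leq \frac{C}{\sum_{l=1}^{n}\varepsilon_{l}}\eqfinp
\]
It then remains to estimate the denominator: with $\varepsilon_{l} = cl^{-\theta}$ and $0 < \theta < 1$, a comparison with $\int_{1}^{n+1}x^{-\theta}\dd x$ gives $\sum_{l=1}^{n}\varepsilon_{l} \geq \frac{c}{1-\theta}\bp{(n+1)^{1-\theta}-1}$, which is of order $n^{1-\theta}$, yielding the announced $\mathcal{O}\vardelim{n^{\theta-1}}$ bound.

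The main obstacle is not any single calculation but making the summation in the second step genuinely telescope despite the variable coefficients $1+\alpha_{k}$ and $\beta_{k}$ multiplying the Lyapunov terms: the accumulated error cannot be discarded termwise and is controlled only by combining the \emph{uniform} bound of Lemma~\ref{lem:bound_expec_lyap} with the summability of $\alpha_{k}, \beta_{k}, \gamma_{k}$. This is precisely where Assumption~\ref{hyp:bound_bias} (rather than the weaker almost-sure condition~\ref{hyp:err_sum}) is needed, since it makes the bias contributions to $\alpha_{k}$ and $\gamma_{k}$ summable in a deterministic sense. A minor point worth checking is that $\jconv{k}$ is well-defined and nonnegative, which follows from $u\opt$ being a minimizer together with the integrability guaranteed by the linearly bounded subgradient assumption~\ref{hyp:sglb}.
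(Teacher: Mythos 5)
Your proposal is correct and follows essentially the same route as the paper: take full expectation in Lemma~\ref{lem:lyap_bd} to get~\eqref{eq:recur_esp_lyap}, sum it using the uniform bound from Lemma~\ref{lem:bound_expec_lyap} together with the summability of $\alpha_{k},\beta_{k},\gamma_{k}$ under~\ref{hyp:sig_seq} and~\ref{hyp:bound_bias}, apply convexity of $\J$ to the weighted average, and lower-bound $\sum_{l=1}^{n}\varepsilon_{l}$ by a multiple of $n^{1-\theta}$. Your explicit retention of the telescoped constant $\expec{\ell_{u\opt}\vardelim{\va{u}_{1}}}$ is if anything slightly more careful than the paper's write-up and does not affect the $\mathcal{O}\vardelim{n^{\theta-1}}$ rate.
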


\begin{proof}
  From Lemma~\ref{lem:bound_expec_lyap}, we get that
  Inequality~\eqref{eq:maj_j_uk} is satisfied and the sequence
  $\ba{\expec{\ell_{u\opt}\bp{\va{u}_{k}}}}_{k \in \bbN}$ is bounded. Then,
  there exists a constant $M \geq 0$ such that
  $\expec{\ell_{u\opt}\bp{\va{u}_{k}}} \leq M$ for all $k \in
  \bbN$. Summing~\eqref{eq:maj_j_uk} over $i \leq k \leq n$ and using
  $\expec{\ell_{u\opt}\bp{\va{u}_{k}}} \leq M$, we get:
  \begin{align*}
    \sum_{k=i}^{n} \varepsilon_{k} \jconv{k} &\leq {M + } \sum_{k=i}^{n} \vardelim{M(\alpha + \beta) + \gamma + \delta Q_{k}^{2}}\varepsilon_{k}^{2} + \vardelim{\frac{2}{b}M + 1} Q_{k}\varepsilon_{k}\eqfinp
  \end{align*}
  In the sequel, let $R = M(\alpha + \beta) + \gamma$ and
  $S = \frac{2}{b}M + 1$. By convexity of $J$, we get:
  \begin{align*}
    \expec{\J\vardelim{\widetilde{\va{U}}_{i}^{n}} - \J(u\opt)} \leq \frac{{M + } \sum_{k=i}^{n} \vardelim{R + \delta Q_{k}^{2}}\varepsilon_{k}^{2} + S Q_{k}\varepsilon_{k}}{\sum_{k=i}^{n} \varepsilon_{k}}\eqfinp
  \end{align*}
  We have $\varepsilon_{k} = ck^{-\theta}$ with $1/2 < \theta < 1$ and:
  \begin{align*}
    \sum_{k=1}^{n} k^{-\theta} \geq \frac{(n+1)^{1-\theta}-1}{1-\theta} \geq
    \widetilde{C}_{\theta}n^{1-\theta} \eqfinv
  \end{align*}
  for some $\widetilde{C}_{\theta} > 0$. Moreover, from~\ref{hyp:sig_seq}
  and~\ref{hyp:bound_bias}, $\varepsilon_{k}^{2}$, $Q_{k}\varepsilon_{k}$ and
  $Q_{k}^{2}\varepsilon_{k}^{2}$ are the terms of convergent series. Thus, there
  exists a constant $C_{\theta} > 0$ such that:
  \begin{align*}
    \expec{\J\vardelim{\widetilde{\va{U}}_{1}^{n}} - \J(u\opt)} \leq \frac{C_{\theta}}{n^{1-\theta}}\eqfinv
  \end{align*}
  which gives the desired rate of convergence.
\end{proof}

Theorem~\ref{thm:cv_rate_expec_avrg} proves a convergence rate of order
$\mathcal{O}\vardelim{n^{\theta-1}}$ for the stochastic APP algorithm without
assuming strong convexity of the objective. This rate appears for stochastic
gradient descent in~\cite{bach_non-asymptotic_2011} where it is stated that the
combination of large step sizes of order $\mathcal{O}\vardelim{n^{-\theta}}$
with $1/2 < \theta < 1$, together with averaging lead to the best convergence
behavior. A similar rate is also given for stochastic proximal gradient
in~\cite{rosasco_convergence_2019}.

In the following theorem, we show that this rate also holds when we consider the
expected function value taken at the last iterate $\va{U}_{n}$ instead of the
averaged iterate $\widetilde{\va{U}}_{1}^{n}$. Using the concept of modified
Fej\'{e}r monotone sequences, Lin and al. \cite{lin_modified_2018} have given
convergence rates of the expected function value of the last
iterate for many algorithms, such as the projected subgradient method or the
proximal gradient algorithm. The idea of modified Fej\'{e}r sequence is adapted
to the stochastic case in~\cite[Theorem 3.1]{rosasco_convergence_2019}. We
further adapt this concept for the stochastic APP algorithm. {Before
stating the result, we need a technical lemma.
\begin{lemma}
  Under Assumptions~\textup{\ref{hyp:sep_banach}-\ref{hyp:mes_w},
  \ref{hyp:jc_lsc}-\ref{hyp:bound_bias}}, there exists a constant $M' \geq 0$ such that $\expec{\ell_{\va{u}_{j}}\bp{\va{u}_{k}}} \leq M'$ for all $j,k \in \bbN$.
\label{lem:unif_bd_expec_lyap}
\end{lemma}
}

{
\begin{proof}
  As $\nabla K$ is $L_{K}$-Lipschitz continuous by~\ref{hyp:aux_func},
  we have the following inequality
  (see for example~\cite[Lemma 1.2.3]{nesterov_introductory_2004})
  \begin{equation*}
  K(v) \leq K(u) + \bscal{\nabla K(u)}{v-u} + \frac{L_{K}}{2} \sqnorm{u-v} \eqfinv
  \end{equation*}
  and hence, for all $u,v \in \Uad$,
  \begin{equation*}
    \ell_{v}\np{u} \leq \frac{L_{K}}{2}\nnorm{u-v}^{2} \leq
    L_{K} \bp{\nnorm{u-u\opt}^{2} + \nnorm{v-u\opt}^{2}} \eqfinv
  \end{equation*}
  the last inequality arising from the standard norm inequality
  $\nnorm{a+b}^{2} \leq 2\np{\nnorm{a}^{2}+\nnorm{b}^{2}}$. Then using the~$b$-strong
  convexity of~$K$ and Equation~\eqref{eq:stoch-pbsansc-ppa-lyap}, we obtain
  \begin{equation}
  \ell_{v}\np{u} \leq \frac{2 L_{K}}{b} \bp{\ell_{u\opt}\np{u}+\ell_{u\opt}\np{v}} \eqfinp
  \label{eq:up_bd_lyap}
  \end{equation}
  Writing Inequality~\eqref{eq:up_bd_lyap} in terms of random variables, $v$
  and $u$ being respectively replaced by $\va{U}_{j}$ and $\va{U}_{k}$, and
  taking the extended expectation (all quantities are positive) leads to:
  \begin{align*}
    \expec{\ell_{\va{u}_{j}}\np{\va{u}_{k}}}
    & \leq \frac{2 L_{K}}{b} \Bp{\expec{\ell_{u\opt}\np{\va{u}_{k}}}
      + \expec{\ell_{u\opt}\np{\va{u}_{j}}}} \eqfinv \\
    & \leq \frac{4 L_{K}}{b} M \eqfinv
  \end{align*}
  since we have by Lemma~\ref{lem:bound_expec_lyap} that the quantities
  $\expec{\ell_{u\opt}\np{\va{u}_{k}}}$ are bounded by a constant~$M$.
\end{proof}
}

\begin{theorem}
  Suppose that Assumptions~\textup{\ref{hyp:sep_banach}-\ref{hyp:mes_w},
    \ref{hyp:jc_lsc}-\ref{hyp:bound_bias}} are satisfied. Let $n \in \bbN$ and
  let $\ba{\va{u}_{k}}_{k\in \bbN}$ be the sequence of iterates of the
  stochastic APP algorithm. Suppose that for all $k \in \bbN$,
  $\varepsilon_{k} = ck^{-\theta}$ with $1/2 < \theta < 1$ and a constant
  $c > 0$. Assume also that $Q_{k} \leq qk^{-\nu}$ for $\nu > 1 - \theta$ and a
  constant $q > 0$. Then, for any minimizer $u\opt$ of $J$ we have:
  \begin{align*}
    \jconv{n} = \mathcal{O}\vardelim{n^{\theta-1}} \eqfinp
  \end{align*}
  In particular, the rate of convergence can be arbitrarily close to the order
  $n^{-1/2}$ if $\theta$ is chosen to be arbitrarily close to $1/2$.
  \label{thm:cv_rate_expec_last}
\end{theorem}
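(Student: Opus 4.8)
The plan is to reduce the statement, exactly as in the proof of Theorem~\ref{thm:cv_rate_expec_avrg}, to a deterministic recursion satisfied by the scalar sequences $\phi_k = \expec{\ell_{u\opt}\vardelim{\va{u}_k}}$ and $a_k = \jconv{k} \geq 0$, and then to adapt the modified Fej\'er monotonicity argument of~\cite{lin_modified_2018,rosasco_convergence_2019} so as to extract a rate for the single terminal term $a_n$ rather than for a weighted average. First I would start from Inequality~\eqref{eq:recur_esp_lyap} of Lemma~\ref{lem:bound_expec_lyap} which, together with the boundedness $\phi_k \leq M$ established there, can be rewritten (for $k$ large enough that $\beta_k < 1$) in the clean modified Fej\'er form
\begin{align*}
  \phi_{k+1} \leq \phi_k - \varepsilon_k a_k + \delta_k \eqfinv
\end{align*}
where $\delta_k \geq 0$ is summable with explicit decay $\delta_k = \mathcal{O}\vardelim{\varepsilon_k^2} + \mathcal{O}\vardelim{Q_k\varepsilon_k} = \mathcal{O}\vardelim{k^{-2\theta}} + \mathcal{O}\vardelim{k^{-\theta-\nu}}$. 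Here $\theta > 1/2$ makes $\varepsilon_k^2$ summable, and the hypothesis $\nu > 1-\theta$ makes $Q_k\varepsilon_k$ summable with a controlled tail; this is the first place where the extra assumption on $Q_k$ enters.

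Second, summing this recursion over any suffix window $[i,n]$ gives the family of bounds $\sum_{k=i}^n \varepsilon_k a_k \leq \phi_i + \sum_{k \geq i}\delta_k \leq M + \Sigma_i$, with $\Sigma_i = \mathcal{O}\vardelim{i^{1-2\theta}} + \mathcal{O}\vardelim{i^{1-\theta-\nu}}$. Dividing by $S_{i,n} := \sum_{k=i}^n \varepsilon_k$ and invoking convexity of $\J$ exactly as in Theorem~\ref{thm:cv_rate_expec_avrg} controls the expected gap of each suffix-averaged iterate. The crucial extra ingredient, and the heart of the modified Fej\'er technique, is to bound the excess of the last iterate over these suffix averages: expressing $a_n$ through the telescoping identity relating suffix averages of the $a_k$ over windows of growing length, the difference between $a_n$ and a window average is controlled by the one-step displacement estimate~\eqref{eq:stoch-pbsansc-ppa-maj-uk}, namely $\bnorm{\va{u}_{k+1}-\va{u}_k} \leq \tfrac{\varepsilon_k}{b}\bp{e_1\nnorm{\va{u}_k}+e_2\nnorm{\va{u}_{k+1}}+e_3+\nnorm{\va{r}_k}}$, together with the local Lipschitz continuity of $\J$ (Corollary~\ref{cor:sglb-lip}, via the linearly bounded subgradient estimate of Proposition~\ref{prop:sglb-lip} and Assumption~\ref{hyp:sglb}) and the second-moment bound $\expec{\sqnorm{\va{u}_k - u\opt}} \leq 2M/b$ coming from~\eqref{eq:stoch-pbsansc-ppa-lyap}. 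These combine, after a Cauchy--Schwarz step, to give $\babs{a_{k+1}-a_k} = \mathcal{O}\vardelim{\varepsilon_k}$ in expectation.

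Third, I would telescope over geometrically spaced window lengths $k_j \asymp n\,2^{-j}$, as in~\cite{lin_modified_2018}: each window contributes a suffix-average term $\mathcal{O}\vardelim{(M+\Sigma_{i_j})/S_{i_j,n}}$ plus an accumulated displacement term, and summing the resulting geometric series collapses to the contribution of the largest window $[1,n]$, where $S_{1,n} \asymp n^{1-\theta}$ while the numerator $M + \Sigma_1$ is $\mathcal{O}\vardelim{1}$ because $\sum_k \varepsilon_k^2$ and $\sum_k Q_k\varepsilon_k$ converge under~\ref{hyp:sig_seq} and~\ref{hyp:bound_bias}. Plugging in $\varepsilon_k = ck^{-\theta}$ and $Q_k \leq qk^{-\nu}$ and estimating the partial sums $\sum_{k=i}^n k^{-\theta} \asymp n^{1-\theta}$ then yields $a_n = \jconv{n} = \mathcal{O}\vardelim{n^{\theta-1}}$, and letting $\theta \downarrow 1/2$ recovers the near-$n^{-1/2}$ rate.

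The main obstacle I expect is precisely this passage from the ergodic (averaged) bound to the last-iterate bound: unlike in Theorem~\ref{thm:cv_rate_expec_avrg}, one cannot simply apply convexity once and must control how far the terminal gap $a_n$ can exceed the suffix averages. This forces the displacement estimate~\eqref{eq:stoch-pbsansc-ppa-maj-uk} and the multi-scale telescoping to be balanced so that no logarithmic factor survives, and it is exactly here that the stronger decay hypothesis $Q_k \leq qk^{-\nu}$ with $\nu > 1-\theta$ is essential: it keeps the bias-induced corrections at or below the order $n^{\theta-1}$, whereas a slower-decaying bias would dominate the last iterate even though it remains harmless for the average.
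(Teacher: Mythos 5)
Your first two steps coincide with the paper's: you start from Inequality~\eqref{eq:recur_esp_lyap}, use the uniform bound $\expec{\ell_{u\opt}\vardelim{\va{u}_{k}}} \leq M$ from Lemma~\ref{lem:bound_expec_lyap}, and sum over suffix windows to control $\sum_{k=i}^{n}\varepsilon_{k}a_{k}$ by a summable sequence whose decay is governed by $\min\{2\theta,\nu+\theta\}$. The divergence — and the genuine gap — is in how you pass from these suffix sums to the single term $a_{n}$. You attribute to the modified Fejér technique a mechanism it does not use: bounding $a_{n}$ minus a suffix average via the one-step displacement estimate~\eqref{eq:stoch-pbsansc-ppa-maj-uk} and the local Lipschitz continuity of $\J$, then telescoping over geometrically spaced windows. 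The paper's proof (following~\cite{lin_modified_2018,rosasco_convergence_2019}) never invokes~\eqref{eq:stoch-pbsansc-ppa-maj-uk} or Corollary~\ref{cor:sglb-lip} at this stage; it uses the purely algebraic identity of Lemma~\ref{lem:sum_manip},
\begin{align*}
  \varepsilon_{n} a_{n} = \frac{1}{n} \sum_{k=1}^{n} \varepsilon_{k} a_{k} + \sum_{i=1}^{n-1} \frac{1}{i(i+1)} \Bp{\sum_{k=n-i+1}^{n} \varepsilon_{k} a_{k} - i\varepsilon_{n-i}a_{n-i}} \eqfinv
\end{align*}
discards the term $\frac{1}{i+1}\bp{\frac{1}{i}\sum_{k=n-i+1}^{n}\varepsilon_{k}-\varepsilon_{n-i}}a_{n-i}$ using only that $\na{\varepsilon_{k}}$ is decreasing and $a_{n-i}\geq 0$, bounds $\jconvany{k}{n-i}\leq a_{k}$ by optimality of $u\opt$, and then exchanges the order of summation to reach $\varepsilon_{n}a_{n}\leq \sum_{k=1}^{n}\frac{1}{n-k+1}\xi_{k}$ with $\xi_{k}=\mathcal{O}\vardelim{k^{-\mu}}$, $\mu=\min\{2\theta,\nu+\theta\}>1$. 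A $\log n$ does appear there, but attached to $\xi_{\lfloor n/2+1\rfloor}/\varepsilon_{n}=\mathcal{O}\vardelim{n^{\theta-\mu}}$, which is $o\vardelim{n^{\theta-1}}$ precisely because $\mu>1$.

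Your replacement step does not close. If you write $a_{n}$ against the suffix average over $[i_{j},n]$ with $i_{j}\asymp n-n2^{-j}$, the per-window contribution $\bp{M+\Sigma_{i_{j}}}/S_{i_{j},n}$ behaves like $2^{j}n^{\theta-1}$ (since $S_{i_{j},n}\asymp n2^{-j}\varepsilon_{n}$ while the numerator stays $\mathcal{O}(1)$), so the geometric series over $j$ is dominated by its \emph{smallest} window, not the largest, and summing it naively gives $\mathcal{O}\vardelim{n^{\theta}}$. The Shamir--Zhang-style recursion that this displacement-plus-cascade argument actually implements is known to accumulate one constant per scale and hence a $\log n$ factor on the \emph{dominant} term, which is exactly what your assertion that ``no logarithmic factor survives'' would need to rule out — and the proposal offers no computation doing so. (There is also a secondary soft spot: $\babs{a_{k+1}-a_{k}}=\mathcal{O}\vardelim{\varepsilon_{k}}$ in expectation requires combining a random Lipschitz constant with a random displacement, both linear in $\nnorm{\va{u}_{k}},\nnorm{\va{u}_{k+1}}$, so you need bounded second moments of the iterates and a Cauchy--Schwarz step; this is available from~\eqref{eq:stoch-pbsansc-ppa-lyap} and Lemma~\ref{lem:bound_expec_lyap}, but it is extra machinery the algebraic route renders unnecessary.) To repair the proof, replace your third step by Lemma~\ref{lem:sum_manip} and the sign/monotonicity arguments above.
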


\begin{proof} For $k\in \bbN$,
  let $a_{k} = \jconv{k}$, {which is {finite}
  by Lemma~\ref{lem:bound_expec_lyap}}.
  Fix $n \in \bbN$, from Lemma~\ref{lem:sum_manip} applied to the sequence
  $\na{\varepsilon_{k} a_{k}}_{k\in \bbN}$ (see Appendix), we have that:
  \begin{align*}
    \varepsilon_{n} a_{n} = \frac{1}{n} \bp{ \sum_{k=1}^{n} \varepsilon_{k} a_{k}}
    + \sum_{i=1}^{n-1} \frac{1}{i(i+1)} \bgp{ \Bp{\sum_{k=n-i+1}^{n} \varepsilon_{k} a_{k}} - i\varepsilon_{n-i}a_{n-i}}\eqfinp
  \end{align*}
  Moreover, we have that:
  \begin{align*}
    \Bp{\sum_{k=n-i+1}^{n} \varepsilon_{k} a_{k}} - i\varepsilon_{n-i}a_{n-i}
    &=
      \sum_{k=n-i+1}^{n} \np{ \varepsilon_{k} a_{k} - \varepsilon_{n-i}a_{n-i}}
    \\
    &=
      \sum_{k=n-i+1}^{n} \bp{ \varepsilon_{k}\np{ a_{k} -a_{n-i}} + \np{\varepsilon_{k} - \varepsilon_{n-i}} a_{n-i}}\\
    &\le
      \sum_{k=n-i+1}^{n} \varepsilon_{k}\np{ a_{k} -a_{n-i}}\\
      &= \sum_{k=n-i+1}^{n} \varepsilon_{k}\jconvany{k}{n-i}
      \eqfinv
  \end{align*}
  where the last inequality follows from the fact that the sequence $\na{\varepsilon_{k}}_{k \in \bbN}$ is decreasing and that by optimality of $u\opt$, we have $a_{n-i} \geq 0$. We therefore obtain that:
  \begin{align}
    \varepsilon_{n} a_{n} \leq \frac{1}{n} \Bp{\sum_{k=1}^{n} \varepsilon_{k} a_{k}}
    + \sum_{i=1}^{n-1} \frac{1}{i(i+1)} \sum_{k=n-i+1}^{n} \varepsilon_{k}{\jconvany{k}{n-i}} \eqfinp
    \label{eq:init_bd}
  \end{align}
  {We bound the terms of~\eqref{eq:init_bd}:}
  \begin{enumerate}
    \item  From Lemma~\ref{lem:bound_expec_lyap}, Inequality~\eqref{eq:maj_j_uk} is
  satisfied and there exists a constant $M \geq 0$ such that
  $\expec{\ell_{u\opt}\bp{\va{u}_{k}}} \leq M$ for all $k \in \bbN$.
  Summing~\eqref{eq:maj_j_uk} from $1$ to $n$ and
    using that $\beta_{n} - 1 \leq 0$ for $n$ large enough, we get:
  \begin{align}
  \begin{aligned}
    \sum_{k=1}^{n} \varepsilon_{k} a_{k}&
    {\begin{multlined}[t]
    \leq M_{1} + \sum_{k=2}^{n} \Bp{(\alpha_{k} + \beta_{k-1})\expec{\ell_{u\opt}\bp{\va{u}_{k}}} + \gamma_{k}} \\
    + (\beta_{n} - 1)\expec{\ell_{u\opt}\bp{\va{u}_{n+1}}}
    \end{multlined}}\\
    &\leq M_{1} + \sum_{k=2}^{n} M(\alpha_{k} + \beta_{k-1}) + \gamma_{k}\eqfinp
    \label{eq:term_bd}
  \end{aligned}
  \end{align}
  with $M_{1} = (1+\alpha_{1})\ell_{u\opt}\vardelim{u_{1}} + \gamma_{1}$.

\item
  In order to bound the second term in the right hand side of~\eqref{eq:init_bd},
  we start from Inequality~\eqref{eq:bound_real} in Lemma~\ref{lem:lyap_bd}
  that we write in terms of random variables, $v$ being replaced by~$\va{u}_{n{-}i}$.
  This leads to the following inequality
  \begin{multline}
    \label{eq:inequality-ps1}
    \underbrace{ (1-\beta\varepsilon_{k}^{2}) \ell_{\va{u}_{n{-}i}}(\va{u}_{k+1})}_{\va{A}'_{k+1}} \leq
    \underbrace{\Bp{1+\alpha\varepsilon_{k}^{2}
        + \frac{2}{b}\varepsilon_{k}\nnorm{\va{r}_{k}}} \ell_{\va{u}_{n{-}i}}(\va{u}_{k})}_{\va{B}'_k}
    \\
    + \underbrace{\Bp{\gamma\varepsilon_{k}^{2}+\varepsilon_{k}\nnorm{\va{r}_{k}}
        +\delta\np{\varepsilon_{k}\nnorm{\va{r}_{k}}}^{2}}}_{\va{C}'_k} \\
    - \underbrace{\varepsilon_{k} \bp{(\jc+\jad)(\va{u}_{k},\va{w}_{k+1})-(\jc+\jad)(\va{u}_{n{-}i},\va{w}_{k+1})}}_{\va{D}'_k}
    \eqfinp
  \end{multline}
  This inequality is similar to Inequality~\eqref{eq:inequality-ps},
  but the term ${\va{D}'_k}$ cannot be assumed to be  nonnegative
  and we need to adapt the steps used in the proof of Theorem~\ref{thm:cv}.
  Using Lemma~\ref{lem:unif_bd_expec_lyap} we obtain that the terms
  ${\va{A}'_{k+1}}$ and ${\va{B}'_k}$ are integrable so that
  their conditional expectation is well defined.
  Moreover, since~$k \geq n{-}i$, the random variable~$\va{u}_{n-i}$
  is $\tribu{F}_k$-measurable and thus~$\ell_{\va{u}_{n{-}i}}\bp{\va{u}_{k}}$
  is also $\tribu{F}_k$-measurable. For the term ${\va{D}'_k}$, we consider
  separately ${\va{D}^{\prime,1}_k}=\varepsilon_{k} {(\jc+\jad)(\va{u}_{k},\va{w}_{k+1})}$
  and ${\va{D}^{\prime,2}_k}=\varepsilon_{k} {(\jc+\jad)(\va{u}_{n-i},\va{w}_{k+1})}$.
  Following the steps of the proof of Theorem~\ref{thm:cv}, we obtain
  that both terms ${\va{D}^{\prime,1}_k}$ and ${\va{D}^{\prime,2}_k}$
  admit explicit (extended) conditional expectations with respect to
  the $\sigma$-field $\tribu{F}_{k}$ which are given by
  $\bespc{ \va{D}^{\prime,1}_k}{\tribu{F}_{k}}(\omega)
   = \varepsilon_{k}\J(\va{u}_k(\omega))$
  and $\bespc{ \va{D}^{\prime,2}_k}{\tribu{F}_{k}}(\omega)
   = \varepsilon_{k}\J(\va{u}_{n-i}(\omega))$.
  Now, using Lemma~\ref{lem:bound_expec_lyap} we have that $J(\va{u}_k)$
  and $J(\va{u}_{n-i})$ are both integrable and therefore the conditional
  expectation of ${\va{D}'_k}$ is well defined as the difference of two
  integrable functions. We can therefore take the conditional expectation
  \wrt~$\tribu{F}_k$ on both sides of Inequality~\eqref{eq:inequality-ps1}
  and we obtain
  \begin{multline*}
    \bespc{\ell_{\va{u}_{n{-}i}}\bp{\va{u}_{k+1}}}{\tribu{F}_{k}}
    \leq
    \np{1+\va{\alpha}_{k}} \ell_{\va{u}_{n{-}i}}\bp{\va{u}_{k}}
    + \va{\beta}_{k}\bespc{\ell_{\va{u}_{n{-}i}}\bp{\va{u}_{k+1}}}{\tribu{F}_{k}} \\
    + \va{\gamma}_{k}
    - \varepsilon_{k} \bp{\J(\va{u}_{k}) - \J(\va{u}_{n{-}i})}
    \eqfinp
  \end{multline*}
  In this last inequality, replacing the random variables
  $\np{\va{\alpha}_{k},\va{\beta}_{k}, \va{\gamma}_{k}}$ by their
  deterministic upper bounds $\np{{\alpha}_{k},{\beta}_{k},{\gamma}_{k}}$
  given by Equation~\eqref{def:coef_det}, and taking the expectation
  of the left and right terms, we obtain:
  \begin{multline}
    \label{eq:recur_esp_lyap_ni_unordered}
    \expec{\ell_{\va{u}_{n-i}}\bp{\va{u}_{k+1}}} \leq
    \np{1+\alpha_{k}} \expec{\ell_{\va{u}_{n-i}}\bp{\va{u}_{k}}}
    + \beta_{k}\expec{\ell_{\va{u}_{n-i}}\bp{\va{u}_{k+1}}} \\
    + \gamma_{k}
    - \varepsilon_{k} \jconvany{k}{n-i} \eqfinv
  \end{multline}
  We have already seen by Lemma~\ref{lem:unif_bd_expec_lyap} that
  $\expec{\ell_{\va{u}_{n-i}}\bp{\va{u}_{k+1}}}$ is finite. We have
  also seen by Lemma~\ref{lem:bound_expec_lyap} that $a_{k} = \jconv{k}$
  is finite, therefore so is $\jconvany{k}{n-i} = a_{k} - a_{n-i}$.
  All quantities in~\eqref{eq:recur_esp_lyap_ni_unordered}
  are finite and we can write:
    \begin{multline}
    \label{eq:recur_esp_lyap_ni}
    \varepsilon_{k} \jconvany{k}{n-i} \leq
    \np{1+\alpha_{k}} \expec{\ell_{\va{u}_{n-i}}\bp{\va{u}_{k}}}\\
    + (\beta_{k} - 1)\expec{\ell_{\va{u}_{n-i}}\bp{\va{u}_{k+1}}}
    + \gamma_{k}
     \eqfinv
  \end{multline}
  From Lemma~\ref{lem:unif_bd_expec_lyap}, there exists a constant
  $M' \geq 0$ such that $\expec{\ell_{\va{u}_{j}}\bp{\va{u}_{k}}} \leq M'$
  for all $j,k \in \bbN$. Summing~\eqref{eq:recur_esp_lyap_ni} from $n-i$
  to $n$ and using that $\beta_{n} - 1 \leq 0$ for $n$ large enough, we get:


  \begin{align}
    \sum_{k=n-i+1}^{n} \varepsilon_{k}{\jconvany{k}{n-i}}
    &\leq \gamma_{n-i} + \sum_{k=n-i+1}^{n} M'\vardelim{\alpha_{k} + \beta_{k-1}} + \gamma_{k}
    \eqfinp
    \label{eq:bd_seond_term}
  \end{align}

\end{enumerate}
  Define:
  \begin{align*}
    \bar{\alpha}_{k} = \alpha\varepsilon_{k}^{2} + \frac{2}{b}\varepsilon_{k}qk^{-\nu}, \quad
    \bar{\gamma}_{k} = \vardelim{\gamma + \delta q^{2}k^{-2\nu}}\varepsilon_{k}^{2} + \varepsilon_{k}qk^{-\nu} \eqfinp
  \end{align*}
  As $Q_{k} \leq qk^{-\nu}$, we have $\alpha_{k} \leq \bar{\alpha}_{k}$ and $\gamma_{k} \leq \bar{\gamma}_{k}$. Let $\xi_{k} = {\vardelim{M+M'}}(\bar{\alpha}_{k} + {\beta_{k-1}}) + \bar{\gamma}_{k}$.
  Moreover, note that:
  \begin{align}
    \sum_{i=1}^{n-1} \frac{1}{i(i+1)} \sum_{k=n-i+1}^{n} \xi_{k}
    = \sum_{k=2}^{n} \sum_{i=n-k+1}^{n-1} \vardelim{\frac{1}{i} - \frac{1}{i+1}} \xi_{k}
    = \sum_{k=2}^{n} \frac{\xi_{k}}{n-k+1} - \frac{1}{n} \sum_{k=2}^{n} \xi_{k}\eqfinp
    \label{eq:exch_sum}
  \end{align}
  We plug~\eqref{eq:term_bd} {and~\eqref{eq:bd_seond_term}} into~\eqref{eq:init_bd} and use that $M(\alpha_{k} + \beta_{k}) + \gamma_{k} \leq \xi_{k}$ {and $M'(\alpha_{k} + \beta_{k}) + \gamma_{k} \leq \xi_{k}$} along with~\eqref{eq:exch_sum}. This yields:
  \begin{align}
    \varepsilon_{n}a_{n} &\leq {\frac{M_{1}}{n}} + \frac{1}{n} \sum_{k=2}^{n} \xi_{k} + \sum_{k=2}^{n} \frac{1}{n-k+1} \xi_{k} - \frac{1}{n} \sum_{k=2}^{n} \xi_{k} + {\sum_{i=1}^{n-1} \frac{\gamma_{n-i}}{i(i+1)}} \notag\\
    &= {\frac{M_{1}}{n}} + \sum_{k=2}^{n} \frac{1}{n-k+1} \xi_{k} +  {\sum_{i=1}^{n-1} \frac{\gamma_{n-i}}{i(i+1)}}\eqfinp \label{eq:bd_en_an}
  \end{align}
  From the assumptions on $\varepsilon_{k}$, $\na{\xi_{k}}_{k \in \bbN}$ is non-increasing. Thus,
  \begin{align}
    \sum_{k=2}^{n} \frac{1}{n-k+1} \xi_{k}
    &\leq \xi_{\left\lfloor{\frac{n}{2}+1}\right\rfloor} \sum_{n/2+1\leq k \leq n} \frac{1}{n-k+1} + \frac{2}{n} \sum_{2\leq k < n/2+1} \xi_{k} \eqfinv\notag\\
    &\leq \xi_{\left\lfloor{\frac{n}{2}+1}\right\rfloor} \vardelim{\log\vardelim{\frac{n}{2}} + 1} + \frac{2}{n} \sum_{k=2}^{n} \xi_{k} \eqfinp \label{eq:sum_manip}
  \end{align}
  By assumption, $\varepsilon_{n} = cn^{-\theta}$ and we have:
  \begin{align*}
    \xi_{k}
    &\leq \vardelim{{(M+M')}(\alpha + \beta) + \gamma + \delta q^{2}k^{-2\nu}}c^{2}{{(k-1)}}^{-2\theta} + \vardelim{\frac{2}{b}{(M+M')} + 1} cqk^{-(\nu+\theta)}\\
    &\leq \xi (k-1)^{-\mu}\eqfinv
  \end{align*}
  for $\mu = \min\left\{2\theta, \nu + \theta\right\}$ and some constant $\xi > 0$ so that,
  \begin{align}
    \frac{1}{\varepsilon_{n}}\sum_{k=2}^{n} \frac{1}{n-k+1} \xi_{k} \leq 2^{\mu}\frac{\xi}{c} {(n-1)}^{\theta - \mu} \vardelim{\log\vardelim{\frac{n}{2}} + 1} + 2\frac{\xi}{c} n^{\theta - 1} \sum_{k=1}^{n-1} k^{-\mu}\eqfinp
    \label{eq:bd_xik}
  \end{align}
  As $\theta > 1/2$ and $\nu > 1 - \theta$, we have $\mu > 1$ so,
  \begin{align*}
    \sum_{k=1}^{n-1} k^{-\mu} \leq \frac{\mu}{\mu - 1}\eqfinp
  \end{align*}
  {
  Using a similar computation as in~\eqref{eq:sum_manip} and that $\gamma_{i} \leq \kappa i^{-\mu}$ for some constant $\kappa > 0$, we deduce that there exist constants $\Gamma_{1}, \Gamma_{2} > 0$ such that:
  \begin{align}
    \frac{1}{\varepsilon_{n}}\sum_{i=1}^{n-1} \frac{\gamma_{n-i}}{i(i+1)} \leq \Gamma_{1} n^{\theta - \mu} + \Gamma_{2} n^{\theta - 2} \eqfinp
    \label{eq:bd_gamma}
  \end{align}
  Gathering~\eqref{eq:bd_xik} and~\eqref{eq:bd_gamma} into~\eqref{eq:bd_en_an}, we get:
  \begin{align}
    a_{n} \leq M_{1}n^{\theta - 1} + \Xi_{1} (n-1)^{\theta - \mu} \vardelim{\log\vardelim{\frac{n}{2}} + 1} + \Xi_{2} n^{\theta - 1} + \Gamma_{1} n^{\theta - \mu} + \Gamma_{2} n^{\theta - 2} \eqfinv
  \end{align}
  with $\Xi_{1} = 2^{\mu}\frac{\xi}{c}$ and $\Xi_{2} = 2\frac{\xi}{c} \frac{\mu}{\mu-1}$.
  }
  Finally, as $\theta - \mu < \theta - 1$, we get that $a_{n} = \mathcal{O}\vardelim{n^{\theta - 1}}$.
  This concludes the proof.
\end{proof}

\begin{remark}
  Inequality~\eqref{eq:maj_j_uk} (which holds in fact for any $u\in \Uad$
  in place of $u\opt$) is the counterpart of modified Fej\'{e}r
  monotonicity~\cite{lin_modified_2018}. The main differences are
  that~\eqref{eq:maj_j_uk} involves a Bregman divergence instead of the
  Euclidean distance. Moreover, there are coefficients
  $\alpha_{k}, \beta_{k} > 0$ that slightly degrade the inequality compared to
  what we obtain with Fej\'{e}r monotone sequences where
  $\alpha_{k} = \beta_{k} = 0$. The summability of $\alpha_{k}$ and $\beta_{k}$
  in addition with the boundedness of the expectation of the Bregman divergence
  $\ba{\expec{\ell_{u\opt}\bp{\va{u}_{k}}}}_{k \in \bbN}$ allow us to proceed in
  the same way as in~\cite{lin_modified_2018,rosasco_convergence_2019} to get
  the convergence rate of Theorem~\ref{thm:cv_rate_expec_last}.
\end{remark}

\section{Conclusion}
\label{sec:ccl}

We have studied the stochastic APP algorithm in a reflexive {separable} Banach
case. This framework generalizes many stochastic optimization algorithms
{for convex problems}. We have proved the measurability of the iterates
of the algorithm, hence filling a theoretical gap to ensure that the quantities
we manipulate when deriving efficiency estimates are well-defined. We have shown
the convergence of the stochastic APP algorithm in the case where a bias on the
gradient is considered. Finally, efficiency estimates are derived while taking
the bias into account. Assuming a sufficiently fast decay of this bias, we get a
convergence rate for the expectation of the function values that is similar to
that of well-known stochastic optimization algorithms when no bias is present,
such as stochastic gradient descent~\cite{bach_non-asymptotic_2011}, stochastic
mirror descent~\cite{nemirovski_robust_2009} or the stochastic proximal gradient
algorithm~\cite{rosasco_convergence_2019}. Future work will consist in an
application the stochastic APP algorithm to an optimization problem in a Banach
space with decomposition aspects in mind.

\section*{Acknowledgements}
The authors would like to thank the two anonymous reviewers for their
careful reading and for their comments and questions which helped
to improve and clarify the manuscript.

\appendix

\section{Technical results used in the proofs}

\begin{lemma}
  \label{lem:sum_manip}
  Let $\na{a_{i}}_{i \in \bbN}$ be a sequence in $\bbR$. Let $n \in \bbN$ and for $i \in \na{0, 1, \ldots, n-1}$, let $s_{i} = \sum_{k=n-i}^{n} a_{k}$. Then,
  \begin{align*}
    a_{n} = \frac{s_{n-1}}{n} + \sum_{i=1}^{n-1} \frac{1}{i(i+1)}(s_{i-1} - ia_{n-i}) \eqfinp
  \end{align*}
\end{lemma}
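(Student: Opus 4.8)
The plan is to prove the identity by a telescoping argument, after first recognizing the right shape of the summand. I would begin by extending the definition $s_i = \sum_{k=n-i}^{n} a_k$ to $i = 0$ by the same formula, so that $s_0 = a_n$; this matches the sum $\sum_{i=1}^{n-1}\frac{1}{i(i+1)}(s_{i-1}-i a_{n-i})$ in the statement, which involves $s_0, s_1, \ldots, s_{n-2}$ together with the isolated term $s_{n-1}/n$. The key idea is to read each summand as a consecutive difference of the \emph{running averages of the trailing terms}. Concretely, set
\[
  A_i = \frac{s_{i-1}}{i} = \frac{1}{i}\sum_{k=n-i+1}^{n} a_k, \qquad i \in \{1, \ldots, n\},
\]
the average of the last $i$ entries $a_{n-i+1}, \ldots, a_n$. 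Then the two quantities isolated in the identity are exactly the endpoints of this family: $A_1 = s_0 = a_n$ and $A_n = s_{n-1}/n$.

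Next I would show that the $i$-th summand equals $A_i - A_{i+1}$. Using the one-step recursion $s_i = s_{i-1} + a_{n-i}$ (passing from the trailing block of length $i$ to that of length $i+1$ appends the term $a_{n-i}$) together with the partial fraction $\frac{1}{i}-\frac{1}{i+1} = \frac{1}{i(i+1)}$, one computes
\[
  A_i - A_{i+1} = \frac{s_{i-1}}{i} - \frac{s_{i-1}+a_{n-i}}{i+1} = \frac{s_{i-1}}{i(i+1)} - \frac{a_{n-i}}{i+1} = \frac{1}{i(i+1)}\bigl(s_{i-1} - i\,a_{n-i}\bigr),
\]
which is precisely the $i$-th term of the sum. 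Summing over $i = 1, \ldots, n-1$ then telescopes to $A_1 - A_n = a_n - s_{n-1}/n$, and rearranging gives the claimed formula.

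Once the auxiliary quantity $A_i$ is introduced the argument is essentially a two-line verification, so there is no serious analytic obstacle; the only point requiring care is the index bookkeeping---the convention $s_0 = a_n$, the range $i \in \{1, \ldots, n-1\}$ of the telescoping sum, and the boundary identifications $A_1 = a_n$, $A_n = s_{n-1}/n$. As a cross-check, and as an alternative proof that does not rely on guessing $A_i$, I would expand the right-hand side as a linear combination of $a_1, \ldots, a_n$ and compute the coefficient of each $a_j$ using $\frac{1}{i(i+1)} = \frac{1}{i} - \frac{1}{i+1}$: the contributions telescope so that the coefficient of $a_n$ is $\frac{1}{n} + \sum_{i=1}^{n-1}\frac{1}{i(i+1)} = \frac{1}{n} + \bigl(1 - \frac{1}{n}\bigr) = 1$, whereas for $j < n$ the contribution $\frac{1}{n} + \bigl(\frac{1}{n-j+1}-\frac{1}{n}\bigr)$ coming from $s_{n-1}/n$ and from the $s_{i-1}$ terms is cancelled exactly by the term $-\frac{1}{n-j+1}\,a_j$ issued from $-\sum_{i}\frac{1}{i+1}a_{n-i}$, giving coefficient $0$. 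Either route establishes the identity.
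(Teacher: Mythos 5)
Your proof is correct and is essentially the paper's own argument: the paper likewise writes the $i$-th summand as $\frac{1}{i}s_{i-1}-\frac{1}{i+1}s_{i}$ (your $A_i - A_{i+1}$) using $s_i = s_{i-1}+a_{n-i}$ and telescopes from $s_0 = a_n$ to $s_{n-1}/n$. The coefficient-matching cross-check is a fine extra but not needed.
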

The proof of Lemma~\ref{lem:sum_manip} is a straightforward computation and is left to the reader.

\begin{theorem}
  \label{thm:robbins-siegmund}
  \cite[Robbins-Siegmund]{robbins_convergence_1971} Consider four sequences of nonnegative random variables $\{\va{\Lambda}_{k}\}_{k\in\mathbb{N}}$,
  $\{\va{\alpha}_{k}\}_{k\in\mathbb{N}}$,
  $\{\va{\beta}_{k}\}_{k\in\mathbb{N}}$
  and~$\{\va{\eta}_{k}\}_{k\in\mathbb{N}}$,
  that are all adapted to a given filtration
  $\{\tribu{F}_{k}\}_{k\in\mathbb{N}}$.
  Moreover, suppose that:
  \begin{align*}
    \bespc{\va{\Lambda}_{k+1}}{\tribu{F}_{k}} \leq
    (1+\va{\alpha}_{k})\va{\Lambda}_{k} + \va{\beta}_{k} -
    \va{\eta}_{k} , \; \forall k \in \mathbb{N} \; ,
  \end{align*}
  and that $\sum_{k} \va{\alpha}_{k} < + \infty$ and $\sum_{k} \va{\beta}_{k} < + \infty$ almost surely.
  Then, the sequence of random variables~$\{\va{\Lambda}_{k}\}_{k\in\mathbb{N}}$ converges almost surely to a finite random variable $\va{\Lambda}^{\infty}$,
  and we have in addition that $\sum_{k} \va{\eta}_{k} < + \infty$ almost surely.
\end{theorem}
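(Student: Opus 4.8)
The plan is to transform the one-step inequality into a genuine supermartingale relation and then invoke the almost sure convergence theorem for nonnegative supermartingales. The factor $(1+\va{\alpha}_{k})$ multiplying $\va{\Lambda}_{k}$ is removed by discounting: since the sequences are adapted and nonnegative, I would introduce the predictable, $(0,1]$-valued process $\pi_{k}=\prod_{i=0}^{k-1}(1+\va{\alpha}_{i})^{-1}$ with $\pi_{0}=1$. Using $\log(1+x)\le x$ together with $\sum_{i}\va{\alpha}_{i}<\infty$ almost surely, the decreasing product $\pi_{k}$ converges almost surely to a strictly positive limit $\pi_{\infty}$. Multiplying the hypothesis by the $\tribu{F}_{k}$-measurable factor $\pi_{k+1}$ and using the identity $\pi_{k+1}(1+\va{\alpha}_{k})=\pi_{k}$ then yields a relation in which $\va{\Lambda}_{k}$ carries no multiplicative coefficient.

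Next I would absorb the additive terms. Setting $S_{k}=\sum_{i=0}^{k-1}\pi_{i+1}\va{\beta}_{i}$, which is predictable, nondecreasing, and bounded by $\sum_{i}\va{\beta}_{i}<\infty$ almost surely, and defining $V_{k}=\pi_{k}\va{\Lambda}_{k}-S_{k}$, a direct computation (using $S_{k+1}=S_{k}+\pi_{k+1}\va{\beta}_{k}$) gives $\mathbb{E}[V_{k+1}\mid\tribu{F}_{k}]\le V_{k}-\pi_{k+1}\va{\eta}_{k}\le V_{k}$. Hence $\{V_{k}\}$ is a supermartingale, bounded below by $-S_{\infty}$ with $S_{\infty}=\lim_{k}S_{k}<\infty$ almost surely.

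To pass to the limit I would localize, so as to repair the lack of $L^{1}$ control caused by the hypotheses being almost sure rather than in expectation. For $m\in\bbN$ define the stopping time $\tau_{m}=\inf\{k : \sum_{i=0}^{k}\va{\beta}_{i}>m\}$; the stopped process $V_{k\wedge\tau_{m}}$ is a supermartingale bounded below by $-m$, so $V_{k\wedge\tau_{m}}+m$ is a nonnegative supermartingale and converges almost surely by the supermartingale convergence theorem. Since $\sum_{i}\va{\beta}_{i}<\infty$ almost surely, $\tau_{m}\uparrow\infty$ as $m\to\infty$, so $V_{k}$ itself converges almost surely to an almost surely finite limit. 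As $\pi_{k}\to\pi_{\infty}>0$ and $S_{k}\to S_{\infty}$ both converge almost surely, $\va{\Lambda}_{k}=(V_{k}+S_{k})/\pi_{k}$ converges almost surely to a finite limit $\va{\Lambda}^{\infty}$, which is therefore almost surely bounded. For the last conclusion I would sum the telescoping bound $\pi_{k+1}\va{\eta}_{k}\le V_{k}-\mathbb{E}[V_{k+1}\mid\tribu{F}_{k}]$ on the stopped process and let $m\to\infty$, obtaining $\sum_{k}\pi_{k+1}\va{\eta}_{k}<\infty$ almost surely; since $\pi_{k+1}\to\pi_{\infty}>0$ this forces $\sum_{k}\va{\eta}_{k}<\infty$ almost surely.

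The hard part is precisely this last localization: the summability assumptions on $\{\va{\alpha}_{k}\}$ and $\{\va{\beta}_{k}\}$ hold only almost surely, so the supermartingale $\{V_{k}\}$ need not be bounded in $L^{1}$ and Doob's theorem cannot be applied to it directly. Introducing the stopping times $\tau_{m}$ and checking both that $\tau_{m}\uparrow\infty$ almost surely and that the almost sure limits obtained on the events $\{\tau_{m}=\infty\}$ glue together consistently is the delicate step; the discounting and the telescoping estimate for $\va{\eta}_{k}$ are then routine.
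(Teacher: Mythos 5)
The paper does not actually prove this statement: Theorem~\ref{thm:robbins-siegmund} is quoted as the classical Robbins--Siegmund theorem and used as a black box (only Corollary~\ref{cor:robbins-siegmund}, which is reduced to it by a change of sequences, comes with a proof). So there is no in-paper argument to compare yours against. What you wrote is the standard discounting proof, and it is essentially correct: $\pi_{k+1}$ is indeed $\tribu{F}_{k}$-measurable because the sequences are adapted, the identity $\pi_{k+1}(1+\va{\alpha}_{k})=\pi_{k}$ removes the multiplicative factor, subtracting the predictable compensator $S_{k}$ yields $\espe(V_{k+1}\mid\tribu{F}_{k})\le V_{k}-\pi_{k+1}\va{\eta}_{k}$, the stopped process satisfies $V_{k\wedge\tau_{m}}\ge -m$ because $\sum_{i<\tau_{m}}\va{\beta}_{i}\le m$ and $\pi_{i+1}\le 1$, and $\prbt\bp{\bigcup_{m}\{\tau_{m}=\infty\}}=1$ lets you glue the limits; finally $\pi_{\infty}\ge\exp\bp{-\sum_{i}\va{\alpha}_{i}}>0$ converts summability of $\pi_{k+1}\va{\eta}_{k}$ into summability of $\va{\eta}_{k}$.

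One point deserves more care than you give it. Doob's convergence theorem for nonnegative supermartingales, as you invoke it for $V_{k\wedge\tau_{m}}+m$, presupposes that each term is integrable, and nothing in the hypotheses makes $\va{\Lambda}_{k}$ (hence $V_{k}$) integrable: the one-step inequality only controls a conditional expectation, and $\espe(\va{\beta}_{k})$ may well be infinite. The repair is routine but should be stated: either localize additionally on the $\tribu{F}_{0}$-measurable events $\{\va{\Lambda}_{0}\le N\}$ and check by induction, using the supermartingale inequality together with the lower bound $-m$, that every stopped term is then integrable; or invoke the convergence theorem for nonnegative \emph{generalized} supermartingales, which requires no integrability. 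The same caveat applies to the expectation implicitly taken when you sum the telescoping bound to get $\sum_{k}\pi_{k+1}\va{\eta}_{k}<+\infty$. With that adjustment the argument is complete.
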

An extension of Robbins-Siegmund theorem is given by the following corollary.
\begin{corollary}
  \label{cor:robbins-siegmund}
  Consider the following sequences of nonnegative random variables $\{\va{\Lambda}_{k}\}_{k\in\mathbb{N}}$,
  $\{\va{\alpha}_{k}\}_{k\in\mathbb{N}}$,
  $\{\va{\beta}_{k}\}_{k\in\mathbb{N}}$,
  $\{\va{\gamma}_{k}\}_{k\in\mathbb{N}}$,
  and~$\{\va{\eta}_{k}\}_{k\in\mathbb{N}}$,
  that are all adapted to a given filtration
  $\{\tribu{F}_{k}\}_{k\in\mathbb{N}}$.
  Moreover suppose that:
  \begin{align*}
    \bespc{\va{\Lambda}_{k+1}}{\tribu{F}_{k}} \leq
    \bp{1+\va{\alpha}_{k}}\va{\Lambda}_{k}
    + \va{\beta}_{k}\bespc{\va{\Lambda}_{k+1}}{\tribu{F}_{k}}
    + \va{\gamma}_{k} - \va{\eta}_{k} \; ,
  \end{align*}
  and that $\sum_{k} \va{\alpha}_{k} < + \infty$, $\sum_{k} \va{\beta}_{k} < + \infty$ and $\sum_{k} \va{\gamma}_{k} < + \infty$ almost surely.
  Then, the sequence of random variables $\{\va{\Lambda}_{k}\}_{k\in\mathbb{N}}$ converges almost surely to a finite random variable $\va{\Lambda}^{\infty}$, and we have in addition that $\sum_{k} \va{\eta}_{k} < + \infty$ almost surely.
\end{corollary}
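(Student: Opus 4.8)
The plan is to reduce the statement to the classical Robbins--Siegmund theorem (Theorem~\ref{thm:robbins-siegmund}) by absorbing the forward-looking term $\va{\beta}_{k}\bespc{\va{\Lambda}_{k+1}}{\tribu{F}_{k}}$ into the left-hand side. Since $\sum_{k}\va{\beta}_{k} < +\infty$ $\Pas$, the sequence $\na{\va{\beta}_{k}}_{k\in\bbN}$ tends to $0$ $\Pas$, so there is an $\Pas$ finite random index beyond which $\va{\beta}_{k} < 1/2$. On the region $\ba{\va{\beta}_{k} < 1/2}$ one rearranges the hypothesis as
\begin{align*}
  \bp{1-\va{\beta}_{k}}\bespc{\va{\Lambda}_{k+1}}{\tribu{F}_{k}} \leq
  \bp{1+\va{\alpha}_{k}}\va{\Lambda}_{k} + \va{\gamma}_{k} - \va{\eta}_{k} \eqfinv
\end{align*}
and, dividing by $1-\va{\beta}_{k}\geq 1/2 > 0$, recovers an inequality of Robbins--Siegmund type
\begin{align*}
  \bespc{\va{\Lambda}_{k+1}}{\tribu{F}_{k}} \leq
  \bp{1+\va{\alpha}'_{k}}\va{\Lambda}_{k} + \va{\beta}'_{k} - \va{\eta}'_{k} \eqfinp
\end{align*}
with the modified coefficients $\va{\alpha}'_{k} = \np{\va{\alpha}_{k}+\va{\beta}_{k}}/\np{1-\va{\beta}_{k}}$, $\va{\beta}'_{k} = \va{\gamma}_{k}/\np{1-\va{\beta}_{k}}$ and $\va{\eta}'_{k} = \va{\eta}_{k}/\np{1-\va{\beta}_{k}}$.

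Next I would verify that these modified coefficients satisfy the hypotheses of Theorem~\ref{thm:robbins-siegmund}. On $\ba{\va{\beta}_{k}<1/2}$ one has $\va{\alpha}'_{k} \leq 2\np{\va{\alpha}_{k}+\va{\beta}_{k}}$ and $\va{\beta}'_{k}\leq 2\va{\gamma}_{k}$, so $\sum_{k}\va{\alpha}'_{k}<+\infty$ and $\sum_{k}\va{\beta}'_{k}<+\infty$ $\Pas$ follow from the summability of $\va{\alpha}_{k}$, $\va{\beta}_{k}$ and $\va{\gamma}_{k}$, and all three modified sequences are nonnegative there. Moreover $1-\va{\beta}_{k}\leq 1$ gives $\va{\eta}'_{k} \geq \va{\eta}_{k} \geq 0$, so the Robbins--Siegmund conclusion $\sum_{k}\va{\eta}'_{k}<+\infty$ will in turn yield $\sum_{k}\va{\eta}_{k}<+\infty$. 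Applying Theorem~\ref{thm:robbins-siegmund} then produces the almost sure convergence of $\na{\va{\Lambda}_{k}}_{k\in\bbN}$ to an $\Pas$ bounded limit $\va{\Lambda}^{\infty}$, together with the summability of $\na{\va{\eta}_{k}}_{k\in\bbN}$.

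The main obstacle is that the index from which $\va{\beta}_{k}<1/2$ holds is random, whereas Theorem~\ref{thm:robbins-siegmund} requires the inequality to hold from a fixed deterministic time. I would resolve this by a localization argument: for each fixed $N\in\bbN$ introduce the stopping time $\sigma_{N}=\inf\ba{k\geq N : \va{\beta}_{k}\geq 1/2}$ (with $\inf\emptyset = +\infty$) and apply Theorem~\ref{thm:robbins-siegmund} to the stopped process $j\mapsto\va{\Lambda}_{(N+j)\wedge\sigma_{N}}$ relative to the shifted filtration $\tribu{F}_{N+j}$. On the event $\ba{\sigma_{N}=+\infty}$ the rearrangement above is valid for every $k\geq N$, whereas once $\sigma_{N}$ is reached the stopped process is frozen and the inequality holds trivially, provided the modified coefficients are extended to $\ba{\va{\beta}_{k}\geq 1/2}$ by nonnegative summable surrogates (for instance $2\np{\va{\alpha}_{k}+\va{\beta}_{k}}$ and $2\va{\gamma}_{k}$), so that they remain well defined, nonnegative and summable everywhere.

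Finally, since $\va{\beta}_{k}\to 0$ $\Pas$, the events $\ba{\sigma_{N}=+\infty}$, $N\in\bbN$, cover $\omeg$ up to a negligible set; on each of them the stopped process eventually coincides with $\na{\va{\Lambda}_{k}}_{k\in\bbN}$, so the convergence of the stopped process and the summability of the stopped $\va{\eta}'$-sequence transfer to the original sequences, which completes the proof. The deterministic version invoked in Lemma~\ref{lem:bound_expec_lyap} is then the special case in which $\tribu{F}_{k}$ is the trivial $\sigma$-field, where all the conditional expectations reduce to the quantities themselves.
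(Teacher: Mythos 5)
Your proposal is correct and follows essentially the same route as the paper: divide the hypothesis inequality by $1-\va{\beta}_{k}$ once $\va{\beta}_{k}\leq 1/2$, bound the modified coefficients by $2(\va{\alpha}_{k}+\va{\beta}_{k})$ and $2\va{\gamma}_{k}$, observe $\va{\eta}_{k}/(1-\va{\beta}_{k})\geq\va{\eta}_{k}$, and invoke Theorem~\ref{thm:robbins-siegmund}. The one place you go beyond the paper is in treating the fact that the index beyond which $\va{\beta}_{k}<1/2$ is random: the paper simply says "for $k$ large enough" on a fixed realization, whereas your stopping-time localization $\sigma_{N}$ makes that step rigorous, which is a legitimate tightening rather than a different proof.
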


\begin{proof}
  Consider a realization of the different sequences satisfying the assumptions of the corollary, and define three sequences
  $\{\widetilde{\alpha}_{k}\}_{k\in\mathbb{N}}$,
  $\{\widetilde{\gamma}_{k}\}_{k\in\mathbb{N}}$
  and~$\{\widetilde{\eta}_{k}\}_{k\in\mathbb{N}}$ such that:
  \begin{align*}
    1+\widetilde{\alpha}_{k} = \frac{1+\alpha_{k}}{1-\beta_{k}} \; , \enspace
    \widetilde{\gamma}_{k} = \frac{\gamma_{k}}{1-\beta_{k}} \; , \enspace
    \widetilde{\eta}_{k} = \frac{\eta_{k}}{1-\beta_{k}} \; .
  \end{align*}
  As the sequence~$\na{\beta_{k}}$ converges to zero,
  we have that $\beta_{k} \leq 1/2$ for $k$ large enough.
  For such $k$, we get:
  \begin{align*}
    \frac{1}{1-\beta_{k}} \leq 1 + 2 \beta_{k}
   \quad \text{and} \quad
    1 \leq \frac{1}{1-\beta_{k}} \leq 2 \; .
  \end{align*}
  Then, we deduce that
  $\widetilde{\alpha}_{k} \leq 2(\alpha_{k}+\beta_{k})$,
  $\widetilde{\gamma}_{k} \leq 2 \gamma_{k}$ and
  $\widetilde{\eta}_{k} \geq \eta_{k}$.
  The conclusions of the corollary are then obtained by applying Theorem~\ref{thm:robbins-siegmund} directly.
\end{proof}

\begin{proposition}
  \label{prop:sglb-lip}
  {Let $\ban$ be a Banach space and} consider a function $J : \ban \rightarrow \mathbb{R}$ that is subdifferentiable on a non-empty, closed, convex subset $\Uad$ of $\ban$, with linearly bounded subgradient. Then, there exist $c_{1} > 0$ and $c_{2} > 0$ such that:
  \begin{align}
  \label{eq:sglb-lip}
    \forall (u,v) \in \Uad\times\Uad \eqsepv
    \babs{\J(u)-\J(v)} \leq \Bp{c_{1}\max\ba{\nnorm{u},\nnorm{v}}+c_{2}} \: \nnorm{u-v} \eqfinp
  \end{align}
  In particular, $J$ is Lipschitz continuous on every bounded subset that is contained in~$\Uad$.
\end{proposition}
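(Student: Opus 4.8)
The plan is to exploit the subgradient inequality in both directions, together with the linearly bounded subgradient hypothesis, and to estimate the duality pairing by the dual norm. First I would fix $(u,v) \in \Uad \times \Uad$. Since $\J$ is assumed subdifferentiable on $\Uad$, both subdifferentials $\partial \J(u)$ and $\partial \J(v)$ are non-empty, so I may select subgradients $r_{u} \in \partial \J(u)$ and $r_{v} \in \partial \J(v)$. The key point is that a \emph{different} subgradient is used for each of the two one-sided bounds, which is exactly what produces the $\max$ in the final estimate.

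Next I would write down the two subgradient inequalities. From $r_{u} \in \partial \J(u)$ evaluated at $v$ we get $\J(v) \geq \J(u) + \bscal{r_{u}}{v-u}$, hence $\J(u) - \J(v) \leq \bscal{r_{u}}{u-v}$. Symmetrically, from $r_{v} \in \partial \J(v)$ evaluated at $u$ we get $\J(u) \geq \J(v) + \bscal{r_{v}}{u-v}$, hence $\J(v) - \J(u) \leq \bscal{r_{v}}{v-u}$. Applying the definition of the dual norm (i.e.\ the inequality $\bscal{r}{w} \leq \normdual{r}\,\nnorm{w}$ for the duality pairing between $\ban$ and $\ban\dual$) to each right-hand side yields $\J(u)-\J(v) \leq \normdual{r_{u}}\,\nnorm{u-v}$ and $\J(v)-\J(u) \leq \normdual{r_{v}}\,\nnorm{u-v}$.

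Finally I would invoke the linearly bounded subgradient property, which gives $\normdual{r_{u}} \leq c_{1}\nnorm{u} + c_{2}$ and $\normdual{r_{v}} \leq c_{1}\nnorm{v} + c_{2}$. Substituting these bounds, both $\J(u)-\J(v)$ and $\J(v)-\J(u)$ are dominated by $\bp{c_{1}\max\ba{\nnorm{u},\nnorm{v}} + c_{2}}\nnorm{u-v}$, using that $c_{1} > 0$ so that $\max\ba{c_{1}\nnorm{u}+c_{2},\, c_{1}\nnorm{v}+c_{2}} = c_{1}\max\ba{\nnorm{u},\nnorm{v}} + c_{2}$. Taking the larger of the two one-sided bounds gives the claimed estimate for $\babs{\J(u)-\J(v)}$. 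I do not expect any genuine obstacle here: the argument is elementary once the two subgradients are chosen correctly, and the only point requiring a little care is the bookkeeping that lets the two separate coefficients collapse into a single $\max$.
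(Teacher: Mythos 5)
Your proposal is correct and follows essentially the same route as the paper: the two one-sided subgradient inequalities at $u$ and at $v$, the (generalized) Schwarz/dual-norm estimate on the pairing, and the linearly bounded subgradient hypothesis combined into a single $\max$. The only cosmetic difference is that the paper sandwiches $\J(u)-\J(v)$ between the two pairings before taking absolute values, whereas you bound each one-sided difference separately; the content is identical.
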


\begin{proof}
  Let~$(u,v) \in \Uad\times\Uad$.
  From the definition of subdifferentiability, we get that for all $r \in \partial\J(u)$ and for all $s \in \partial\J(v)$:
  \begin{align*}
    \nscal{s}{u-v} \leq\J(u) -\J(v) \leq \nscal{r}{u-v} \eqfinv
  \end{align*}
  and therefore:
  \begin{align*}
    \babs{\J(u) -\J(v)} \leq \max \ba{\nscal{r}{u-v},\nscal{s}{v-u}} \eqfinp
  \end{align*}
  Using Cauchy-Schwarz inequality and the linearly bounded subgradient assumption,
  we get the desired result.
\end{proof}

\begin{proposition}
  \label{prop:tech2}
  {Let $\ban$ be a Banach space and} $J: \ban \rightarrow \bbR$ be a Lipschitz continuous function with constant~$L > 0$.
  Let~$\{u_{k}\}_{k\in\bbN}$ be a sequence of elements in~$\ban$ and let~$\{\varepsilon_{k}\}_{k\in\bbN}$ be
  a real positive sequence such that:
  \begin{enumerate}[label=(\alph*)]
    \item \label{hyp:a} $\sum_{k\in\bbN} \varepsilon_{k} = + \infty$,
    \item \label{hyp:b} $\exists \mu \in \bbR , \;
    \sum_{k\in \bbN} \varepsilon_{k} \abs{\J(u_{k})-\mu} < + \infty$,
    \item \label{hyp:c} $\exists \delta > 0 , \; \forall k \in \bbN , \;
    \nnorm{u_{k+1}-u_{k}} \leq \delta \varepsilon_{k}$.
  \end{enumerate}
  Then, the sequence~$\ba{\J(u_{k})}_{k\in\bbN}$ converges to~$\mu$.
\end{proposition}

\begin{proof}
  For $\alpha > 0$, define $N_{\alpha} = \ba{ k \in \bbN , \ \abs{\J(u_{k})-\mu} \leq \alpha }$ and $N_{\alpha}^{\complement} = \bbN \setminus N_{\alpha}$.
  \begin{itemize}
    \item[(i)] From Assumption~\ref{hyp:b}, we have:
    \begin{align*}
      + \infty > \sum_{k\in \bbN} \varepsilon_{k} \babs{\J(u_{k})-\mu} \geq
      \sum_{k \in N_{\alpha}^{\complement}} \varepsilon_{k} \babs{\J(u_{k})-\mu} \geq
      \alpha \sum_{k \in N_{\alpha}^{\complement}} \varepsilon_{k} \; .
    \end{align*}
    Hence, for all $\beta > 0$, there exists $n_{\beta} \in \bbN$ such that $\sum_{k \geq n_{\beta} , k \in N_{\alpha}^{\complement}} \varepsilon_{k} \leq \beta$.
    \item[(ii)] From Assumption~\ref{hyp:a}, we have $\sum_{k\in \bbN} \varepsilon_{k}=\sum_{k \in N_{\alpha}} \varepsilon_{k}+\sum_{k \in N_{\alpha}^{\complement}} \varepsilon_{k} = +\infty$
    but we have just proved that the last sum in the above equality is finite, hence the first sum of the right hand side is infinite, which implies that~$N_{\alpha}$ is infinite.
  \end{itemize}
  Let~$\epsilon > 0$, choose~$\alpha = \epsilon/2$ and
  $\beta = \epsilon/(2 L \delta)$. Let~$n_{\beta}$ be the integer defined in~(i).
  For~$k \geq n_{\beta}$, there are two possible cases:
  \begin{itemize}
    \item $k \in N_{\alpha}$: then, by definition of $N_{\alpha}$, we have $\babs{\J(u_{k})-\mu} \leq \alpha < \epsilon$.
    \item $k \notin N_{\alpha}$: let~$m$ be the smallest element of~$N_{\alpha}$ such that~$m \geq k$, this element exists by~(ii).
    Using the fact that~$J$ is Lipschitz continuous, we get:
    \begin{align*}
      \babs{\J(u_{k})-\mu}
      & \leq \babs{\J(u_{k})-\J(u_{m})} + \babs{\J(u_{m})-\mu} \leq L \nnorm{u_{k}-u_{m}} + \alpha \eqfinp
    \end{align*}
    Now, with Assumption~\ref{hyp:c} and condition~(i), it comes:
    \begin{align*}
      \babs{\J(u_{k})-\mu} \leq L \delta \Bigg( \sum_{l=k}^{m-1} \varepsilon_{l} \Bigg) + \alpha \leq L \delta \Bigg( \sum_{l \geq n_{\beta} , l \in N_{\alpha}^{\complement}}  \varepsilon_{l} \Bigg) + \alpha \leq \epsilon \; .
    \end{align*}
  \end{itemize}
  Hence, we get $\babs{\J(u_{k})-\mu} \leq \epsilon$ for all $k\geq n_{\beta}$, giving the desired result.
\end{proof}


\begin{thebibliography}{10}

\bibitem{atchade_perturbed_2017}
{\sc Atchade, Y.~F., Fort, G., and Moulines, E.}
\newblock On {Perturbed} {Proximal} {Gradient} {Algorithms}.
\newblock {\em Journal of Machine Learning Research 18\/} (2017), 1--33.

\bibitem{bach_non-asymptotic_2011}
{\sc Bach, F., and Moulines, E.}
\newblock Non-{Asymptotic} {Analysis} of {Stochastic} {Approximation}
  {Algorithms} for {Machine} {Learning}.
\newblock In {\em Advances in {Neural} {Information} {Processing} {Systems}\/}
  (2011), pp.~451--459.

\bibitem{bauschke_convex_2011}
{\sc Bauschke, H.~H., and Combettes, P.~L.}
\newblock {\em Convex analysis and monotone operator theory in {Hilbert}
  spaces}.
\newblock {CMS} books in mathematics. Springer, New York, 2011.

\bibitem{benveniste_adaptive_2012}
{\sc Benveniste, A., Metivier, M., Priouret, P., and Wilson, S.~S.}
\newblock {\em Adaptive algorithms and stochastic approximations}.
\newblock No.~22 in Stochastic modelling and applied probability.
  Springer-Verl, Berlin, 2012.

\bibitem{bertsekas_stochastic_1996}
{\sc Bertsekas, D.~P., and Shreve, S.~E.}
\newblock {\em Stochastic {Optimal} {Control}: {The} {Discrete}-{Time} {Case}}.
\newblock Athena Scientific, 1996.

\bibitem{billingsley_probability_1995}
{\sc Billingsley, P.}
\newblock {\em Probability and measure}, 3rd~ed.
\newblock Wiley series in probability and mathematical statistics. Wiley, New
  York, 1995.

\bibitem{bregman_relaxation_1967}
{\sc Bregman, L.~M.}
\newblock The relaxation method of finding the common point of convex sets and
  its application to the solution of problems in convex programming.
\newblock {\em USSR Computational Mathematics and Mathematical Physics 7}, 3
  (1967), 200--217.

\bibitem{brezis_analyse_2005}
{\sc Brézis, H.}
\newblock {\em Analyse fonctionnelle: {Théorie} et applications}.
\newblock Mathématiques appliquées pour la maîtrise. Dunod, 2005.

\bibitem{bubeck_convex_2015}
{\sc Bubeck, S.}
\newblock Convex {Optimization}: {Algorithms} and {Complexity}.
\newblock {\em Foundations and Trends® in Machine Learning 8}, 3-4 (2015),
  231--357.

\bibitem{castaing_convex_1977}
{\sc Castaing, C., and Valadier, M.}
\newblock {\em Convex {Analysis} and {Measurable} {Multifunctions}}, vol.~580
  of {\em Lecture {Notes} in {Mathematics}}.
\newblock Springer Berlin Heidelberg, 1977.

\bibitem{cohen_optimization_1978}
{\sc Cohen, G.}
\newblock Optimization by decomposition and coordination: {A} unified approach.
\newblock {\em IEEE Transactions on Automatic Control 23}, 2 (1978), 222--232.

\bibitem{culioli_algorithmes_1987}
{\sc Culioli, J.-C.}
\newblock {\em Algorithmes de décomposition/coordination en optimisation
  stochastique}.
\newblock PhD thesis, Ecole Nationale Supérieure des Mines de Paris, 1987.

\bibitem{culioli_decomposition/coordination_1990}
{\sc Culioli, J.-C., and Cohen, G.}
\newblock Decomposition/{Coordination} {Algorithms} in {Stochastic}
  {Optimization}.
\newblock {\em SIAM Journal on Control and Optimization 28}, 6 (1990),
  1372--1403.

\bibitem{ekeland_convex_1976}
{\sc Ekeland, I., and Temam, R.}
\newblock {\em Convex {Analysis} and {Variational} {Problems}}.
\newblock Society for Industrial and Applied Mathematics, 1999.

\bibitem{geiersbach_projected_2019}
{\sc Geiersbach, C., and Pflug, G.~C.}
\newblock Projected {Stochastic} {Gradients} for {Convex} {Constrained}
  {Problems} in {Hilbert} {Spaces}.
\newblock {\em SIAM Journal on Optimization 29}, 3 (2019), 2079--2099.

\bibitem{geiersbach_stochastic_2020-1}
{\sc Geiersbach, C., and Scarinci, T.}
\newblock Stochastic {Proximal} {Gradient} {Methods} for {Nonconvex} {Problems}
  in {Hilbert} {Spaces}.
\newblock {\em arXiv:2001.01329 [math]\/} (Jan. 2020).
\newblock arXiv: 2001.01329.

\bibitem{hess_measurability_1995}
{\sc Hess, C.}
\newblock On the {Measurability} of the {Conjugate} and the {Subdifferential}
  of a {Normal} {Integrand}.
\newblock {\em Journal of Convex Analysis 2}, 1-2 (1995), 153--165.

\bibitem{hess_epi-convergence_1996}
{\sc Hess, C.}
\newblock Epi-convergence of sequences of normal integrands and strong
  consistency of the maximum likelihood estimator.
\newblock {\em The Annals of Statistics 24}, 3 (1996), 1298--1315.

\bibitem{karimi_non-asymptotic_2019}
{\sc Karimi, B., Miasojedow, B., Moulines, E., and Wai, H.-T.}
\newblock Non-asymptotic {Analysis} of {Biased} {Stochastic} {Approximation}
  {Scheme}.
\newblock In {\em Proceedings of the {Thirty}-{Second} {Conference} on
  {Learning} {Theory}\/} (2019), vol.~99, pp.~1944--1974.

\bibitem{kiefer_stochastic_1952}
{\sc Kiefer, J., and Wolfowitz, J.}
\newblock Stochastic {Estimation} of the {Maximum} of a {Regression}
  {Function}.
\newblock {\em The Annals of Mathematical Statistics 23}, 3 (1952), 462--466.

\bibitem{kuratowski_general_1965}
{\sc Kuratowski, K., and Ryll-Nardzewski, C.}
\newblock A general theorem on selectors.
\newblock {\em Bull. Acad. Polon. Sci. Sér. Sci. Math. Astronom. Phys. 13}, 6
  (1965), 397--403.

\bibitem{kushner_stochastic_1997}
{\sc Kushner, H.~J., and Yin, G.}
\newblock {\em Stochastic approximation algorithms and applications}.
\newblock No.~35 in Applications of {Mathematics}. Springer, New York, 1997.

\bibitem{lin_modified_2018}
{\sc Lin, J., Rosasco, L., Villa, S., and Zhou, D.-X.}
\newblock Modified {Fejér} sequences and applications.
\newblock {\em Computational Optimization and Applications 71}, 1 (2018),
  95--113.

\bibitem{martin_multilevel_2019}
{\sc Martin, M., Nobile, F., and Tsilifis, P.}
\newblock A {Multilevel} {Stochastic} {Gradient} method for {PDE}-constrained
  {Optimal} {Control} {Problems} with uncertain parameters.
\newblock {\em arXiv:1912.11900 [math]\/} (2019).

\bibitem{nemirovski_robust_2009}
{\sc Nemirovski, A., Juditsky, A., Lan, G., and Shapiro, A.}
\newblock Robust {Stochastic} {Approximation} {Approach} to {Stochastic}
  {Programming}.
\newblock {\em SIAM Journal on Optimization 19}, 4 (2009), 1574--1609.

\bibitem{nemirovski_problem_1983}
{\sc Nemirovski, A., and Yudin, D.~B.}
\newblock {\em Problem {Complexity} and {Method} {Efficiency} in
  {Optimization}}.
\newblock A {Wiley}-{Interscience} publication. Wiley, 1983.

\bibitem{nesterov_introductory_2004}
{\sc Nesterov, Y.}
\newblock {\em Introductory {Lectures} on {Convex} {Optimization}}, vol.~87 of
  {\em Applied {Optimization}}.
\newblock Springer US, Boston, MA, 2004.

\bibitem{parikh_proximal_2014}
{\sc Parikh, N., and Boyd, S.~P.}
\newblock Proximal {Algorithms}.
\newblock {\em Foundations and Trends® in Optimization 1}, 3 (2014), 127--239.

\bibitem{polyak_acceleration_1992}
{\sc Polyak, B.~T., and Juditsky, A.~B.}
\newblock Acceleration of {Stochastic} {Approximation} by {Averaging}.
\newblock {\em SIAM Journal on Control and Optimization 30}, 4 (1992),
  838--855.

\bibitem{robbins_stochastic_1951}
{\sc Robbins, H., and Monro, S.}
\newblock A {Stochastic} {Approximation} {Method}.
\newblock {\em The Annals of Mathematical Statistics 22}, 3 (1951), 400--407.

\bibitem{robbins_convergence_1971}
{\sc Robbins, H., and Siegmund, D.}
\newblock A {Convergence} {Theorem} for {Non} {Negative} {Almost}
  {Supermartingales} and {Some} {Applications}.
\newblock In {\em Optimizing {Methods} in {Statistics}}. Springer, New York,
  NY, 1971, pp.~233--257.

\bibitem{rockafellar_variational_2004}
{\sc Rockafellar, R.~T., and Wets, R. J.-B.}
\newblock {\em Variational analysis}.
\newblock No.~317 in Grundlehren der mathematischen {Wissenschaften}. Springer,
  Berlin, 2004.

\bibitem{rosasco_convergence_2019}
{\sc Rosasco, L., Villa, S., and Vũ, B.~C.}
\newblock Convergence of {Stochastic} {Proximal} {Gradient} {Algorithm}.
\newblock {\em Applied Mathematics \& Optimization\/} (2019), 1--27.

\bibitem{ruppert_efficient_1988}
{\sc Ruppert, D.}
\newblock Efficient estimations from a slowly convergent {Robbins}-{Monro}
  process.
\newblock Tech. rep., Cornell University Operations Research and Industrial
  Engineering, 1988.

\bibitem{thibault}
{\sc Thibault, L.}
\newblock Espérances conditionnelles d'intégrandes semi-continus.
\newblock {\em Annales de l'I.H.P, section B 17}, 4 (1981), 337--350.

\bibitem{wada_stopping_2015}
{\sc Wada, T., and Fujisaki, Y.}
\newblock A stopping rule for stochastic approximation.
\newblock {\em Automatica 60\/} (2015), 1--6.

\bibitem{yin_stopping_1990}
{\sc Yin, G.}
\newblock A stopping rule for the {Robbins}-{Monro} method.
\newblock {\em Journal of Optimization Theory and Applications 67}, 1 (1990),
  151--173.

\end{thebibliography}
\end{document}